\let\oldmarginpar\marginpar
\renewcommand\marginpar[1]{\-\oldmarginpar[\raggedleft\footnotesize #1]%
{\raggedright\footnotesize #1}}
\newtheorem{theorem}                   {Theorem}%[section]
\newtheorem{subtheorem}{Theorem}[theorem]
\newtheorem{lemma}           [theorem] {Lemma}
\newtheorem{observation}     [theorem] {Observation} 
\newtheorem{definition}      [theorem] {Definition}
\newtheorem{conjecture}      [theorem] {Conjecture}
\theoremstyle{remark} 
\newtheorem{claim}           [subtheorem] {Claim} 
\def\endofClaim{\hfill\scalebox{.6}{$\Box$}}
\newcommand{\oldqed}{}
\newenvironment{claimproof}[1][Proof]{
  \renewcommand{\oldqed}{\qedsymbol}
  \renewcommand{\qedsymbol}{\endofClaim}
  \begin{proof}[#1]
}{
  \end{proof}
  \renewcommand{\qedsymbol}{\oldqed}
}
\let\subset\subseteq
\let\epsilon\varepsilon
\let\eps\varepsilon
\let\rho\varrho
\def\cU{\mathcal U}
\def\cW{\mathcal W}
\def\cC{\mathcal C}
\def\cF{\mathcal F}
\def\cP{\mathcal P}
\def\cT{\mathcal T}
\def\RR{\mathbb{R}}
\setlist{itemsep=1pt,parsep=0pt,topsep=3pt,partopsep=0pt,itemsep=2pt}  
\setlist{leftmargin=*,labelindent=.5\parindent}
\def\abc{\rm (\alph{*})}
\def\dcup{\dot\cup}
\def\subsc#1{\textsc{\MakeTextLowercase{#1}}} % needs package textcase
\def\qquand{\qquad\text{and}\qquad}
\def\st{ :\quad } 
\def\codeg{\mathrm{codeg}}
\def\neighbor{\mathrm{N}}%neighborhood
\def\component{\mathrm{Comp}}
\def\FN{\mathrm{FN}}
\def\YN{\mathrm{YN}}
\def\XN{\mathrm{XN}}
\def\RN{\mathrm{RN}}
\def\VC{\mathrm{VC}}
\def\EC{\mathrm{EC}}
\def\dist{\mathrm{dist}}
\def\load{\mathrm{load}}
\newcommand{\By}[2]{\overset{\mbox{\tiny{#1}}}{#2}}
\newcommand{\ByRef}[2]{   \By{\eqref{#1}}{#2} }
\newcommand{\leBy}[1]{    \By{#1}{\le} }
\newcommand{\eqByRef}[1]{ \ByRef{#1}{=} }
\newcommand{\leByRef}[1]{ \ByRef{#1}{\le} }
\newcommand{\geByRef}[1]{ \ByRef{#1}{\ge} }
\newcommand{\DEF}[1]{\emph{#1}\marginpar{#1}}
\newcommand{\Exp}{\mathbb{E}}
\newcommand{\Prob}{\mathbb{P}}
\def\primary{\mathsf{prim}}
\def\secondary{\mathsf{sec}}
\def\impgr{\mathsf{IG}} %important group
\def\alphaA{\alpha_\textsf{A}}
\def\alphaB{\alpha_\textsf{B}}
\def\alphaC{\alpha_\textsf{C}}
\def\alphaD{\alpha_\textsf{D}}
\def\alphaE{\alpha_\textsf{E}}
\def\bad{\mathrm{BAD}}
\def\badness{\mathrm{bad}}
\title{An approximate version of the Tree Packing Conjecture}
\thanks{Support by the Institut Mittag-Leffler
(Djursholm, Sweden) is gratefully acknowledged.}
\author{Julia B\"ottcher$^\dagger$}
\address{$^\dagger$ Department of Mathematics, London School of Economics, Houghton Street, London, WC2A~2AE, UK.}
\author{Jan Hladk\'y$^\ddagger$}
\address{$^\ddagger$ Mathematics Institute, Czech Academy of Sciences, \v Zitn\'a 25, Praha. Mathematics Institute is supported by RVO:67985840. The work was done while the author was an EPSRC Research Fellow at DIMAP and Mathematics Institute at the University of
Warwick.}
\author{Diana Piguet$^*$}
\address{$^*$ European Centre of Excellence
NTIS -- New Technologies for Information Society
Faculty of Applied Sciences,
University of West Bohemia,
Univerzitn\'i\ 22, 306 14 Plze\v n,
Czech Republic. \textit{The research leading to these results has received funding from the European Union Seventh
Framework Programme (FP7/2007-2013) under grant agreement no.~PIEF-GA-2009-253925.}}
\author{Anusch Taraz$^\star$}
\address{$^\star$ Institut f\"ur Mathematik, Technische Universit\"at Hamburg-Harburg,
Schwarzenbergstrasse 95, Geb\"aude E, 21073 Hamburg, Germany.
\textit{The author was supported in part by DFG grant TA 319/2-2.}}
\email{j.boettcher@lse.ac.uk, honzahladky@gmail.com, piguet@ntis.zcu.cz, taraz@tuhh.de}
\begin{document}
\begin{abstract}
We prove that for any pair of constants $\eps>0$ and $\Delta$ and for $n$ sufficiently large, every family of trees of orders at most~$n$, maximum degrees at most $\Delta$, and with at most~$\binom{n}{2}$ edges in total packs into $K_{(1+\eps)n}$. This implies asymptotic versions of the Tree Packing Conjecture of Gy\'arf\'as from~1976 and a tree packing conjecture of Ringel from~1963 for trees with bounded maximum degree. A novel random tree embedding process combined with the nibble method forms the core of the proof.
\end{abstract}
%\keywords{extremal graph theory; trees; tree packing conjecture; Ringel's conjecture; probabilistic method}
%\subjclass[2010]{Primary: 05C70; Secondary: 05C05.}
\maketitle

\section{Introduction}\label{sec:intro}
Graph packing is a concept that generalises the notion of graph embedding to
finding several subgraphs in a host graph instead of just one.
A family of graphs $\mathcal{H}=(H_1,\dots,H_k)$ is said to
\DEF{pack} into a graph~$G$ if there exist pairwise
edge-disjoint copies of $H_1,\dots,H_k$ in $G$, where we allow $H_i=H_j$ for $i\neq j$.
Many classical problems in Graph Theory can be
stated as packing problems. 
For example, Mantel's Theorem can be formulated by saying that 
if $G$ is an $n$-vertex graph with less than
$\binom{n}{2}-\frac{n^2}{4}$ edges, then the family $(K_3,G)$
packs into $K_n$.

Among the best known packing problems, let us for example 
mention a conjecture of Bollob\'as, Catlin, and Eldridge~\cite{BollEld:PackingConj,Catlin:PackingConj} that
any two $n$-vertex graphs $H_1,H_2$ of maximum degree $\Delta(H_1)$ and $\Delta(H_2)$, respectively, and satisfying
$(\Delta(H_1)+1)(\Delta(H_2)+1)\le n+1$
pack into $K_n$. The asymptotic solution of this conjecture was reported by G\'abor Kun around~2006.

Another beautiful packing conjecture was posed by Gy\'arf\'as (see \cite{GyaLeh}) in~1976 and concerns trees. This conjecture is
referred to as the Tree Packing Conjecture.
\begin{conjecture}\label{conj:tree-packing}
Any family $(T_1,T_2,\ldots,T_n)$ of trees, $j\in
[n]$ of order $v(T_j)=j$, packs into~$K_n$. 
\end{conjecture}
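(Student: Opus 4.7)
The plan is to bootstrap from the approximate bounded-degree theorem of this paper to the exact, degree-unrestricted conjecture via two additional ingredients: a preprocessing phase for high-degree trees, and an absorption mechanism to close the gap between $K_{(1+\eps)n}$ and $K_n$. Write the family as $(T_1,\dots,T_n)$ with $v(T_j)=j$; since $\sum_{j=1}^n(j-1)=\binom{n}{2}$, the packing must use every edge of $K_n$ exactly once, so the problem has zero edge slack.

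First I would split the trees by maximum degree. Those with $\Delta(T_j)\le\Delta$ for a large but fixed $\Delta=\Delta(\eps)$ form the bulk family $\mathcal{B}$; the rest, the \emph{high-degree} trees, form $\mathcal{H}$. A tree of order $j$ has at most $2j/\Delta$ vertices of degree exceeding $\Delta$, so one can carve each $T\in\mathcal{H}$ into a small "heavy skeleton" containing all its high-degree vertices, together with a forest of low-degree branches attached to it. I would embed the heavy skeletons of all $T\in\mathcal{H}$ first, by a dedicated greedy/random argument that maps their few high-degree vertices to carefully chosen "hub" vertices of $K_n$ and allocates host edges at each hub among the skeletons that share it; the attached low-degree branches are then thrown back into the bulk family as additional bounded-degree forests.

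For the bulk family I would apply the main theorem of this paper (in a forest-packing form) to pack most of $\mathcal{B}$ into a carefully chosen subgraph of $K_n$, reserving at the start both a small random \emph{absorbing subgraph} $A$ and the last $\eps n$ or so trees of $\mathcal{B}$ for the endgame. The nibble-style random embedding would be set up so that the residual graph $R$ after this phase, together with $A$, is pseudo-random of exactly the right density and has a prescribed degree sequence matching the remaining trees.

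The hard part, and where I expect the argument to stall, is the final absorption step. First, there is zero edge slack: $T_n$ must be a spanning tree of whatever remains, so $R\cup A$ has to contain a prescribed spanning tree on a prescribed edge set. Second, the trees $T_{n-\eps n},\dots,T_n$ are all \emph{almost spanning}, so any local defect in $R\cup A$ must be correctable simultaneously for each of them. My attempt would be to design $A$ as a union of many vertex-disjoint short "swap paths" so that for any prescribed near-spanning tree $T$ and any pseudo-random residual, one can locally modify a partial embedding of $T$ by rerouting through $A$; but building a universal absorber flexible enough to handle $\eps n$ distinct giant trees of arbitrary shape and unbounded maximum degree, while simultaneously proving that the nibble phase leaves a residual precisely compatible with the reserved absorber, is the genuine obstruction and the reason the exact conjecture is substantially harder than the approximate bounded-degree version established here.
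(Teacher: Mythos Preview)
The statement you are trying to prove is Conjecture~\ref{conj:tree-packing}, the Tree Packing Conjecture of Gy\'arf\'as. The paper does \emph{not} prove this statement; it is presented as an open conjecture, and the main result of the paper (Theorem~\ref{thm:tree-packing}) is only an approximate version valid for bounded-degree trees packed into $K_{(1+\eps)n}$ rather than $K_n$. So there is no ``paper's own proof'' to compare against.

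Your proposal is not a proof but a research outline, and you correctly identify where it fails. The two obstructions you name are real and, as of the writing of this paper, unresolved: (i) the absorption step must convert an approximate packing into an exact one with \emph{zero} edge slack, which in particular forces $T_n$ to be a spanning tree of whatever graph remains, and more generally forces the last $\Theta(n)$ trees (all near-spanning) to perfectly tile an uncontrolled residual; (ii) the high-degree preprocessing you sketch does not come with any mechanism to guarantee that the hub allocations are mutually consistent across many trees sharing the same hubs, nor that the leftover low-degree branches can be reintegrated without violating the per-tree injectivity constraints. The paper's Section~\ref{ssec:strengthenapprox} even exhibits explicit families (modified regular trees) where the exact statement with $\eps=0$ is delicate for parity reasons, and Section~8.2 shows that the unbounded-degree case is genuinely different (stars of order just above $n/2$ cannot be packed even approximately). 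Your proposal does not address either phenomenon. In short, what you have written is a plausible wish list for attacking the conjecture, not a proof, and the paper makes no claim to have one.
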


A related conjecture of
Ringel~\cite{Ringel:TreePacking} dating back to~1963
deals with packing many copies of the same tree.
\begin{conjecture}\label{conj:Ringel}
Any $2n+1$ identical copies of any tree of order $n+1$
pack into $K_{2n+1}$.
\end{conjecture}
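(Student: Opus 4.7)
The plan is to attack Ringel's conjecture by combining the random tree embedding technology developed in this paper with an absorption-type argument to turn an approximate packing into an exact one. I envision four steps: (i) a reduction from arbitrary trees to trees of slowly-growing maximum degree, (ii) the construction of a small absorbing subgraph inside $K_{2n+1}$, (iii) an approximate packing via the nibble-based embedding, and (iv) a completion step that uses the absorber.

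First I would reduce to bounded-degree cores. If $\Delta(T)$ is very large, then $T$ has many leaves, so one can peel leaves iteratively to obtain a core $T'$ with $\Delta(T')\le \log^C n$ (say), together with a prescription for how leaves re-attach. Next, I would reserve a small ``absorber'' $A\subset K_{2n+1}$ with $o(n^2)$ edges, built from many small ``absorbing gadgets'' together with a pseudorandom reservoir. The defining property of $A$ should be that, for any almost-regular leftover graph $L$ on $V(K_{2n+1})$ whose degree sequence meets a short list of parity/divisibility conditions, $A\cup L$ decomposes into exactly $\lceil \eps(2n+1)\rceil$ copies of $T$. Building such an $A$ is by far the most delicate ingredient; one would presumably construct absorber copies of $T$ in which a handful of edges are ``switchable'' and then use a Hall-type argument to match any allowed deficiency to an appropriate switch.

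With $A$ in place, I would apply the main theorem of this paper inside $K_{2n+1}\setminus A$, packing $(1-2\eps)(2n+1)$ edge-disjoint copies of $T'$. Standard nibble concentration shows that the uncovered edges $L$ form an almost-regular pseudorandom graph with high probability. Using the pseudorandomness of $L$, the leaves peeled off in Step~(i) can be re-attached to the already-embedded cores to produce $(1-2\eps)(2n+1)$ honest copies of $T$, because each core requires only a bounded number of free neighbours at prescribed vertices, and $L$ still has many edges at every vertex. Finally, the reserved absorber together with what remains of $L$ is used to embed the last $O(\eps n)$ copies of $T$, closing the packing.

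The main obstacle will be the absorbing step. Designing absorbing gadgets that work uniformly over the family of all input trees $T$, especially highly unbalanced ones, and guaranteeing that the nibble leftover $L$ lies in the absorbable regime, requires a robustness well beyond anything used in the present paper. A related difficulty is the degree-matching problem: in the classical graceful-labelling attack on Ringel the cyclic group $\mathbb{Z}_{2n+1}$ automatically balances all degrees, whereas in an absorber-based proof this balance must be produced by hand, plausibly through a randomised near-graceful vertex labelling whose robustness one then verifies. Establishing the existence of such a labelling, and proving that the absorber actually absorbs the leftover, is where the principal new ideas would have to enter.
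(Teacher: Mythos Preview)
The statement you are addressing is Conjecture~\ref{conj:Ringel}, and the paper does \emph{not} prove it. It is stated as an open conjecture; the paper's contribution is Theorem~\ref{thm:tree-packing}, which only yields the asymptotic relaxation that $2n+1$ copies of a bounded-degree tree of order $n+1$ pack into $K_{(1+\eps)(2n+1)}$. So there is no ``paper's own proof'' to compare against.

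What you have written is not a proof but a research programme, and you essentially say so yourself: you identify the absorbing step as ``the main obstacle'' and concede that ``the principal new ideas would have to enter'' there. That is an honest assessment, but it means the proposal does not constitute a proof. Concretely: (i) the reduction to $\Delta\le\log^C n$ is not free---peeling leaves and re-attaching them later requires free edges at prescribed vertices, and the nibble leftover is only pseudorandom in an averaged sense, not pointwise; (ii) the existence of an absorber that works uniformly for \emph{every} tree $T$ of order $n+1$, including stars and near-stars, is exactly the heart of the problem and you provide no mechanism for it; (iii) the degree-balancing issue you raise is genuine and unresolved. Each of these is a substantial open problem in its own right.

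For context, Ringel's conjecture was proved (for large $n$) by Montgomery, Pokrovskiy and Sudakov, and independently in a generalised form by Keevash and Staden, several years after this paper appeared. Their proofs do follow a broad nibble-plus-absorption philosophy, but the absorber constructions are highly nontrivial and tree-specific, confirming that the step you flag as the obstacle really is where the difficulty lies.
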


Note that both conjectures are best possible in the sense that they deal
with \DEF{perfect packings}, i.e.~the total number of edges packed equals
the number of edges in the host graph.  Moreover, the fact that two
spanning stars do not pack into the complete graph shows that further
requirements than this necessary condition are needed.

A slightly outdated survey on packings of trees is by
Hobbs~\cite{Hobbs81:PackingSurvey}. Here, we recall only
the most important results concerning the two
conjectures above. 

A packing of many of the small trees from Conjecture~\ref{conj:tree-packing}
was obtained by Bollob\'as~\cite{Bollobas83:Packing}, who showed
that any family of trees $T_1,\ldots, T_s$ with $v(T_i)=i$ and $s<
n/\sqrt{2}$ can be packed into $K_n$. He also observed that the validity of
a famous conjecture of Erd\H{o}s and S\'os would imply that one can
improve the bound to $s<\frac12\sqrt{3}n$. The Erd\H{o}s-S\'os Conjecture states that any graph of average degree greater than $k-1$ contains any tree of order at most $k+1$ as a subgraph. The solution of this conjecture for large trees was announced by Ajtai,
Koml\'os, Simonovits, and Szemer\'edi in the early~1990s.
In a similar direction,
Yuster~\cite{Yuster97:Packing} proved that any sequence of
trees $T_1,\ldots,T_s$, $s<\sqrt{5/8}n$ can be packed into
$K_{n-1,n/2}$.
This improves upon a result of Caro and
Roditty~\cite{CaRo90:PackingTrees} and is related to a conjecture of Hobbs, Bourgeois and Kasiraj~\cite{HBK87} 
(see Conjecture~\ref{conj:tree-packingBip} in Section~\ref{sec:Concluding}).
Moreover, a result of Caro and Yuster~\cite{CaYu97:Packing} implies that one can pack
perfectly a family of trees into a complete
graph $K_n$, provided that the trees are very small compared to $n$.

Packing the large trees of Conjecture~\ref{conj:tree-packing}
is a much more challenging task.  Balogh and Palmer~\cite{BaPa:TreePacking}
proved that any family of trees $T_n$, $T_{n-1}$, \ldots,
$T_{n-\frac1{10}n^{1/4}}$, $v(T_i)=i$ packs into~$K_{n+1}$.

Surprisingly few results are known for special classes of tree families.
It was proved already in~\cite{GyaLeh} that
Conjecture~\ref{conj:tree-packing} holds when all the trees are stars and paths. 
Dobson~\cite{Dob02:Packing} and  Hobbs, Bourgeois, and Kasiraj~\cite{HBK87} consider packings of trees with small diameter.
Moreover, Fishburn~\cite{Fish83:MatrixPacking} proved that it is
at least possible to adequately match up the degrees of the trees
$T_1,\dots,T_{n}$ appearing in Conjecture~\ref{conj:tree-packing}: If we
add $n-i$ isolated vertices to the tree~$T_i$ and let
$d_{i,1},\dots,d_{i,n}$ denote the degree sequence of the resulting
forest, then there are permutations $\pi_1,\dots,\pi_n$ such that $\sum_i
d_{i,\pi_i(j)}=n-1$ for all $j\in[n]$.
\bigskip

Our main result, Theorem~\ref{thm:tree-packing},
deals with almost perfect packings of
bounded-degree trees into a complete graph. It implies an
asymptotic solution of Conjecture~\ref{conj:tree-packing} 
and Conjecture~\ref{conj:Ringel} for trees of bounded maximum
degree.
\begin{theorem}\label{thm:tree-packing}
  For any $\eps>0$ and any $\Delta\in \mathbb N$ there is an $n_0\in
  \mathbb N$ such that for any $n\ge n_0$ the following holds. Any family
  of trees $\mathcal T=(T_i)_{i\in[k]}$ such that~$T_i$ has maximum degree
  at most~$\Delta$ and order at most~$n$ for each $i\in[k]$, and
  $\sum_{i\in[k]} e(T_i)\le{\binom{n}{2}}$ packs into $K_{(1+\eps)n}$.
\end{theorem}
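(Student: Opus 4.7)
The plan is to pack the trees into $K_N$, $N=(1+\eps)n$, via an iterative ``nibble''-style embedding that uses the $\eps n$ extra vertices as slack. The central building block is a random embedding of a single bounded-degree tree $T$ into a quasi-random host graph $H$ on $N$ vertices: fix any root $r$ of $T$, assign it to a uniformly random vertex $\phi(r)$ of $H$, then process $V(T)\setminus\{r\}$ in breadth-first order and embed each successive vertex $v$ at a uniformly random available neighbor (in $H$) of the image of its parent. Since $T$ has maximum degree at most $\Delta$ and $H$ is dense and quasi-random, at each step $v$ has $(1-o(1))|V(H)|/2$ choices and the process succeeds with high probability. The key property to prove is that this random embedding is \emph{balanced}: for each edge $f\in E(H)$, the probability that $f$ is used is $(1+o(1))\cdot 2e(T)/(N(N-1))$, with concentration coming from an Azuma--Hoeffding exposure martingale along the BFS steps.

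Granting such a balanced random embedding, the packing would proceed in $O(\log(1/\eps))$ nibble rounds. In each round, group the next trees of $\cT$ into a batch $\mathcal{B}$ whose total edge count is an $\eps^2$-fraction of the current residual host graph $H$, and embed each tree in $\mathcal{B}$ independently via the random BFS into $H$. By balancedness and the independence across $\mathcal{B}$, the combined edge usage is within $o(1)$ of uniform over $E(H)$ with positive probability, so after removing the used edges the new host graph is still quasi-random, with edge count decreased by $e(\mathcal{B})$. Iterating, the residual host graph stays quasi-random with shrinking density until the remaining unpacked trees have total size only $o(n^2)$, at which point they can be placed directly using the $\eps n$ slack vertices by a greedy Hall-type completion.

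The hardest step is handling trees of $\cT$ whose order is close to $n$. For such near-spanning trees the final BFS levels have very few available images, the random embedding is far from uniform there, and a single such tree could unbalance the edge usage of $H$. To fix this I would split every such $T$ into a \emph{primary} part and a small \emph{secondary} part (the last BFS levels together with a carefully chosen family of leaves and their parents), embed the primary part into the main host graph via the random BFS, and handle the secondary part by a separate absorbing/matching step on a pre-reserved auxiliary host graph. Only the primary part then contributes to the edge usage of the main host graph, so its balancedness and the quasi-randomness of $H$ are preserved across rounds; the secondary part consumes edges of the auxiliary graph set aside at the start. Verifying that such a primary/secondary split exists for every bounded-degree tree, and that the final absorbing step simultaneously embeds all secondary parts, is where the bulk of the technical work lies, and is the place where I expect the ``novel random tree embedding process'' advertised in the abstract to play the decisive role.
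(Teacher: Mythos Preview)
Your proposal has the right high-level architecture (nibble rounds, quasirandomness maintained, a correction step at the end using the $\eps n$ slack), but the ``central building block'' you rely on is precisely the step the paper identifies as intractable and replaces. A sequential BFS embedding into~$H$ creates dependencies that propagate along the entire tree: the position of a vertex at depth~$\ell$ depends on all $\ell$ ancestors, and the set of available neighbours at each step depends on the whole history. Your claim that each edge $f\in E(H)$ is used with probability $(1+o(1))\,2e(T)/(N(N-1))$, with concentration from an Azuma martingale along BFS, is exactly what does \emph{not} follow from a straightforward exposure argument; changing one early step can shift the images of all descendants, so the Lipschitz constant for the indicator ``$f$ is used'' is not small. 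The paper says so explicitly in Section~\ref{sec:outline}: this random process ``still leads to lots of small dependencies between embedded vertices, which we found difficult to control.'' Their fix is not to analyse BFS harder but to abandon it: each tree is cut into $r$ small forests (Lemma~\ref{lem:cuttree}), and each forest is embedded by a \emph{limping homomorphism} --- map one colour class uniformly at random, then each vertex of the other class into the common neighbourhood of its (already placed) neighbours. Now the image of any vertex depends only on vertices at distance $\le 2$, and Suen's inequality plus McDiarmid give the edge-balancedness and quasirandomness preservation (Lemma~\ref{lem:quasirandom}).

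A second gap: you embed the trees of one batch independently into the same $H$ and then ``remove the used edges,'' but independent embeddings will collide on edges, and you never correct these collisions. In the paper this is a major component: the Nibble Lemma tracks vertex collisions $\VC$, edge collisions $\EC$, and, crucially, for every host vertex $v$ the number of images of tree-vertices at $v$ whose neighbour is faulty ($\FN(v)$, $\YN(v)$, $\XN(v)$). These quantities are what make the final greedy correction (Lemma~\ref{lem:almostpack}) work; bounding them is nontrivial and uses Talagrand's inequality. Your ``Hall-type completion'' at the end would need exactly this kind of input, which your outline does not supply. Finally, your primary/secondary split (last BFS levels plus some leaves) is aimed at a different obstacle than the paper's: they never face the ``few available images at the end of a near-spanning tree'' problem, because each forest piece has only $O(n/r)$ vertices and is embedded into a set of size $\ge \eps n$; the delicate issue they solve instead is keeping the forbidden-vertex families $(U_{i,s})$ homogeneous across rounds (the load/$\sigma$ machinery), which your sketch does not address at all.
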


We emphasise that, unlike Conjectures~\ref{conj:tree-packing} and~\ref{conj:Ringel}, this theorem only requires the trees to satisfy
the obvious upper bound on the total number of edges.

\section{Outline of the proof}\label{sec:outline}
A very natural approach to pack the trees $T_1,\dots,T_k$ into
$K_{(1+\eps)n}$ is to use a \emph{random embedding process}:
\begin{itemize}
\item Start with $G=K_{(1+\eps)n}$.  Successively build a packing of the
  trees, edge by edge, starting with an arbitrary edge in an arbitrary tree
  and then following the structure of the trees (it is not important which
  order exactly we choose, but one example would be to use a breadth-first
  search order; it also should not matter here whether we embed tree by
  tree, or first embed a few edges of one tree, then a few edges of another
  tree, and so on, and then return to the first tree).
\item In one step of this procedure, when we want to embed an edge $xy$ of
  some tree~$T_i$, with~$x$ already embedded to $h(x)$, choose a random
  neighbour~$v\in V(G)$ of $h(x)$ which is not contained in
  the set~$U_i\subset V(G)$ of $T_i$-images so far, and embed~$y$ to $h(y)=v$.
\item After embedding $xy$, remove the edge $uv$ from~$G$ and add~$v$
  to~$U_i$.
\end{itemize}
Clearly, this process produces a proper packing unless we get stuck, that
is, unless the set $\neighbor_G\big(h(x)\big)\setminus U_i$ gets empty. But if, during the evolution, the host graph~$G$ always remains
sufficiently quasirandom, then with high probability
$\neighbor_G\big(h(x)\big)\setminus U_i$ should not get empty (because
$e(K_{(1+\eps)n})-\sum_{i\in[k]} e(T_i)\ge\eps n^2$ implies that~$G$ has
positive density throughout).

We believe that the host graph does indeed remain quasirandom in this
process. Unfortunately, however, graph processes like this are extremely
difficult to analyse because of their dynamically evolving environment in
each step. A prominent example illustrating the occurrent complexity is
that of the random triangle-free graph process: it took more than a decade
after the introduction of this process until Bohman~\cite{Bohman:Ramsey}
gave a detailed analysis. Nonetheless, a related random construction of
triangle-free graphs was effectively analysed already much earlier by
Kim~\cite{Kim:Ramsey}. This construction was easier to handle because it
uses a \emph{nibble} approach.

The nibble method bypasses the
difficulties originating from the dynamics of random graph processes by
proceeding in constantly many rounds and updating the environment only after each round.
This method was used by R\"odl~\cite{RodlNibble85} to prove the
existence of asymptotically optimal Steiner systems (see~\cite{AS08} for
an exposition). 
Since then it has served as an important ingredient for
several breakthroughs in combinatorics. 
In the context of packing problems the nibble method
is also used
in Kun's announced result on the Bollob\'as--Catlin--Eldridge Conjecture.
In our setting the nibble method amounts to the following approach for
embedding $T_1,\dots,T_k$ into $G=K_{(1+\eps)n}$:
\begin{itemize} 
\item Pack the trees in~$r$ rounds (with~$r$ big but constant). For this purpose,
  cut each tree~$T_i$ into small equally sized forests~$F_{i}^j$ with
  $j\in[r]$ and use in each round exactly one forest of each tree.
\item In round~$j$, for each~$i$ construct a \emph{random homomorphism}
  from the forest~$F_{i}^j$ to~$G$ as follows. First, randomly embed some
  forest vertex~$x$, 
then choose a neighbour~$v$ uniformly at random in 
$\neighbor_G(h(x))\setminus U_i$, where the \emph{forbidden set}
  $U_i\subset V(G)$ are vertices used by~$T_i$ in previous rounds. Then
  continue with the next vertex in~$F_i^j$, following again the structure
  of~$T_i$.
\item After round~$j$, delete all the edges from~$G$ to which some forest edges
  were mapped in this round and add to $U_i$ all images of vertices of $F_{i}^j$.
\end{itemize}
In other words, the difference between this approach and the random process
described above is that the host graph~$G$ and the sets~$U_i$ are not
updated after the embedding of each single vertex, but only at the end of
each round.

Naturally, this procedure will not produce a proper packing of the trees:
Firstly, it will create \emph{vertex collisions}, that is, two vertices of
some tree~$T_i$ are mapped to the same vertex of the host graph~$G$.
Secondly, there will be \emph{edge collisions}, that is, two edges of
different trees are mapped to the same edge. However, since all
forests~$F_i^j$ are small this will create only a small proportion of vertex and edge
collisions in each round, and the updates at the end of each round guarantee that
there are no collisions between rounds. So our hope is that vertex and edge
collisions can be \emph{corrected} at the end.

The difficulty with this construction of random homomorphisms though is
that it still leads to lots of small dependencies between embedded
vertices, which we found difficult to control. We remark that techniques
recently developed by Barber and Long~\cite{BarberLong} allow to handle
these dependencies and show that after each round the host graph is indeed
quasirandom. However, applying these techniques to our setting and
modifying them so that they also give all the additional properties that
we need (such as that there are few collisions; see
Lemma~\ref{lem:newnibble}) would require substantial additional work and
probably lead to a significantly longer proof.

Our approach (which was developed before the techniques of Barber and Long)
is different. We instead use the following construction of random
homomorphisms in round~$j$ of the nibble approach described above, which we
call \emph{limping homomorphisms}:
\begin{itemize} 
\item For each~$i$, call one of the colour classes of~$F_i^j$ the set of
  \emph{primary vertices}, and the other the set of \emph{secondary
    vertices}.
  Now first map all primary vertices randomly to vertices of $V(G)\setminus
  U_i$. Then
  map each secondary vertex randomly into the common
  $(G-U_i)$-neighbourhood of the
  images of its forest neighbours -- unless this common neighbourhood is
  smaller than expected, in which case we simply \emph{skip} this secondary
  vertex.
\end{itemize}
Observe that, if our host graph is quasirandom (and the forest has bounded
degree), then most common neighbourhoods are big and hence few vertices
will get skipped. Of course in this random construction we still have
dependencies. But since these occur only between vertices with distance at
most~$2$ in the trees, we now can control them and prove that the host
graph is quasirandom after each round and that we get few collisions.

It remains to correct the vertex collisions and edge collisions (and take
care of skipped vertices and connections between the different forests of
each tree). Before starting the
described embedding rounds we put aside $\eps n/2$ reserve vertices of
$K_{(1+\eps)n}$. Our random homomorphisms
(constructed on the remaining vertices) also guarantee that the collisions are
sufficiently well distributed over the host graph so that a simple greedy
strategy can be used to relocate vertices in collisions to the reserved
vertices, thus obtaining a proper packing of $T_1,\dots,T_k$.

\medskip
The organisation of the proof is given in Table~\ref{tab:outline}.
\begin{table}
\centering
\begin{tabular}{ccc}
\hline
\multicolumn{3}{|c|}{Theorem~\ref{thm:tree-packing}; proof in Section~\ref{sec:mainproof}}\\
\hline
\multicolumn{1}{c}{$\Uparrow$}& &\multicolumn{1}{c}{$\Uparrow$}\\
\cline{1-1}\cline{3-3}
\multicolumn{1}{|c|}{Packing with a small number of collisions}& &\multicolumn{1}{|c|}{Correcting collisions}\\
\multicolumn{1}{|c|}{Lemma~\ref{lem:ExistsAlmostPacking}; proof in Section~\ref{sec:ExistsAlmostPacking}}& &\multicolumn{1}{|c|}{Lemma~\ref{lem:almostpack}; proof in Section~\ref{sec:almostpack}}\\
\cline{1-1}\cline{3-3}
\multicolumn{1}{c}{$\Uparrow$}& & \\
\cline{1-1}
\multicolumn{1}{|c|}{One round: Lemma~\ref{lem:newnibble}; proof in Section~\ref{sec:proofofnibble}.}& &\\
\multicolumn{1}{|l|}{The proof builds on properties of limping}& &\\
\multicolumn{1}{|l|}{homomorphisms derived in Section~\ref{sec:limpingWalk}.}& &\\
\cline{1-1}\\
\end{tabular}
\caption{Outline of the proof of Theorem~\ref{thm:tree-packing}.}
\label{tab:outline}
\end{table}

\section{Proof of the main theorem (Theorem~\ref{thm:tree-packing})}\label{sec:mainproof}

Theorem~\ref{thm:tree-packing} assumes little on the orders of trees~$\mathcal T$ to be packed. However, as we show as a first step of the proof of Theorem~\ref{thm:tree-packing}, there is a simple transformation of an arbitrary such family into a family of trees whose orders are (with possibly one exception) more than $n/2$. The definition of an $(n,\Delta)$-tree family below formalises this. The fact that the subsequent family is a family of trees of linear orders is crucial for our proof.
\begin{definition}
 A family of trees $\mathcal{T}$ is called \DEF{$(n,\Delta)$-tree family}, if
  all trees in $\mathcal T$ have order at most $n$, maximum degree $\Delta$
  and the total number of edges is at most $\binom{n}{2}$. Further all but
  at most one tree from $\mathcal T$ have order more than $n/2$. Observe
  that the upper bound on the total number of edges and the lower bound on
  the number of vertices imply that such a family must contain less than
  $2n$ trees.
\end{definition}

Indeed, it is easy to show that we can transform any family $\mathcal{T}$ satisfying the requirements of Theorem~\ref{thm:tree-packing} into an $(n,\Delta)$-tree family (see below).  

Our next step will be to relax the requirements of a packing in the sense that we allow an exceptional set $R$ of vertices \emph{not} to be embedded. At the same time, we control both the size of $R$ as well as the number of neighbours of $R$ that get embedded into the same vertex. 

\begin{definition}[Almost packing]\label{def:almostpack}
Let $\mathcal F=\{F_i\}_{i\in[k]}$ be a family of graphs. 
For a graph~$G$, a family of sets $\{R_i\}_{i\in[k]}$ with $R_i\subset V(F_i)$ and a family of maps
$\{h_i\}_{i\in[k]}$ with $h_i:V(F_i)\setminus R_i\rightarrow V(G)$ we say that
$\{h_i,R_i\}_{i\in[k]}$ is an \DEF{$\ell$-almost packing} of $\mathcal F$ into~$G$ if
\begin{enumerate}[label=\abc]
\item $\{h_i\}_{i\in[k]}$ is a packing of the family $\{F_i-R_i\}_{i\in[k]}$ into the graph $G$,
\item\label{def:almostpack:R} we have $|R_i|\le \ell$ for each $i\in [k]$, and
\item\label{def:almostpack:RN} for each $v\in V(G)$, \  
$\sum_{j\in[k]} |\big\{x\in h^{-1}_{j}(v)\colon\, \exists xy\in E(F_{j})
\text{ such that $y\in R_j$}\big\}| \le \ell$.
%$\sum_{j\in[k]}|h_j^{-1}(v)\cap \neighbor_{F_j}(R_j)|\le \ell$.
\end{enumerate}

We say that $\mathcal F$ \DEF{$\ell$-almost packs} into a graph $G$ if there exist $\{R_i\}_{i\in[k]}$ and $\{h_i\}_{i\in[k]}$ such that $\{h_i,R_i\}_{i\in[k]}$ is an $\ell$-almost packing of $\mathcal F$ into~$G$.
\end{definition}

Using this concept, the next two lemmas state that we can always find an almost packing,
and that an almost packing can always be turned into a packing.

%\begin{lemma}
%Hallo
%\end{lemma}
%
%\begin{lemma}
%Hallo
%\end{lemma}

\begin{lemma}[Almost packing lemma]\label{lem:ExistsAlmostPacking}
For any $\eps>0$ and any $\Delta\in \mathbb N$ there is an $n_0\in \mathbb N$
such that for any $n\ge n_0$ the following holds. Any $(n,\Delta)$-tree family $(\epsilon n)$-almost packs into~$K_{(1+\eps)n}$.
\end{lemma}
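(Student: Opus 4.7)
The plan is to build the almost packing by iterating the one-round Lemma~\ref{lem:newnibble} a constant number $r$ of times, collecting in each set $R_i$ all vertices of $T_i$ that are either skipped during a limping embedding or incident to a collision created in some round. I would fix constants with the hierarchy $1/n_0 \ll 1/r \ll \eps,\,1/\Delta$ and use the assumption that $\mathcal T$ is an $(n,\Delta)$-tree family so that $|\mathcal T|\le 2n$.

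First I would preprocess the input. For each $T_i$, fix a proper $2$-colouring and declare one class \emph{primary} and the other \emph{secondary}; this labelling drives the limping homomorphism of Lemma~\ref{lem:newnibble}. Then cut $T_i$ into $r$ edge-disjoint subforests $F_i^1, \ldots, F_i^r$ whose union is $T_i$, each of order at most $2v(T_i)/r$, arranged so that consecutive subforests share only a small number of \emph{anchor} vertices which will carry the embedding information between rounds (a BFS ordering of $V(T_i)$ split into $r$ contiguous chunks gives one such decomposition).

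I would then iterate: for $j=1,\ldots,r$, let $G^{(j)}$ denote the host graph obtained from $K_{(1+\eps)n}$ by removing the edges used in rounds $1,\dots,j-1$, and let $U_i^{(j)}\subseteq V(G)$ record the images of all $T_i$-vertices embedded so far. I would apply Lemma~\ref{lem:newnibble} to $G^{(j)}$ with the family $\{F_i^j\}_{i\in[k]}$ and the anchor data, obtaining for each $i$ a limping-homomorphism extension of the current partial embedding of $T_i$. The one-round lemma is expected to deliver (i) quasirandomness of $G^{(j+1)}$ of the appropriately reduced density, so that it may be fed into the next iteration; (ii) a bound of order $v(T_i)/r$ on the number of vertices of $F_i^j$ that are skipped or are involved in vertex or edge collisions; and (iii) a well-spread distribution of these bad vertices across $V(G)$. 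All bad vertices of round $j$, together with any $T_i$-descendants that become disconnected from the main partial image, are added to $R_i$.

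Assembling the outputs, I set $h_i$ to be the composed embedding restricted to $V(T_i)\setminus R_i$ and verify the three conditions of Definition~\ref{def:almostpack}. Condition~\ref{def:almostpack:R} follows because each of the $r$ rounds contributes at most $O(v(T_i)/r)$ vertices to $R_i$, which is much smaller than $\eps n$. Condition~\ref{def:almostpack:RN} is delivered by guarantee~(iii) summed over trees and rounds, using $|\mathcal T|\le 2n$ and $\Delta$ bounded. Edge-disjointness holds because edges used in different rounds are automatically disjoint (we remove used edges at the end of each round), while edge collisions within a single round are absorbed into $R_i$. The main obstacle is the inductive maintenance of the hypotheses of Lemma~\ref{lem:newnibble}: the host graph $G^{(j)}$ must remain quasirandom of exactly the right density after every round, while simultaneously the collision budget must stay small and uniformly spread so that condition~\ref{def:almostpack:RN} survives. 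Establishing this self-renewing property is precisely what the limping construction and the analysis of Section~\ref{sec:limpingWalk} are designed for; at the level of the present lemma, the remaining task is the bookkeeping that keeps $r$ rounds of accumulated losses strictly below $\eps n$.
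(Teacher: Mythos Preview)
Your outline captures the correct high-level scheme (iterate Lemma~\ref{lem:newnibble} for $r$ rounds, collect skipped and colliding vertices into $R_i$, verify Definition~\ref{def:almostpack}), but it is missing a structural preprocessing step that the Nibble Lemma genuinely requires. Lemma~\ref{lem:newnibble} is stated for forests grouped into $c$ classes indexed by $i\in[c]$, with the tight size condition $v(F_{i,s})=(1\pm\alpha)n_i$ for a \emph{group-specific} target $n_i$, and the inductive invariant~\ref{cond:UStillSmallLoad} asserts $(\beta,n)$-homogeneity of the families $\tilde{\mathcal U}_i$ \emph{per group}. In an $(n,\Delta)$-tree family the orders range over $(n/2,n]$, so after cutting into $r$ pieces the forests vary by a factor of two across trees---far more than the $(1\pm\alpha)$ tolerance. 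The paper handles this by partitioning the trees into $c=50/\eps$ size-groups and then \emph{padding each tree by a short path} so that all trees in a group have exactly the same order; only after this can Lemma~\ref{lem:cuttree} produce forests satisfying the size hypothesis of Lemma~\ref{lem:newnibble}, and only then does the group-wise homogeneity condition make sense and self-propagate. Without this grouping-and-padding step your iteration cannot be started, and your vague clause~(iii) (``well-spread distribution'') has no formal counterpart in the lemma you intend to invoke.

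Two smaller discrepancies: the exceptional tree $T_0$ of order at most $n/2$ must be embedded separately first (it does not fit the group structure), and your ``anchor vertices which carry embedding information between rounds'' is not how the paper connects rounds. In fact the roots $X_{i,s}^j$ are \emph{not embedded at all} by $h_{i,s}^j$ and are simply added to $R_i$; there is no attempt to glue consecutive forests inside the almost-packing, and the Correction Lemma handles the missing connections later. Your alternative of carrying anchor images forward and then adding ``disconnected descendants'' to $R_i$ is riskier: a single bad anchor could disconnect an entire subtree of linear size, blowing the $\eps n$ budget in~\ref{def:almostpack:R}.
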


\begin{lemma}[Correction lemma]\label{lem:almostpack}
Let $\epsilon>0$ be arbitrary, and let $\mathcal T$ be a family of trees of maximum degrees at most $\Delta$. Suppose that $|\mathcal T|\le 2m$, and that 
$\mathcal T$ has an $(\frac{\epsilon^2 m}{64\Delta^2})$-almost packing into~$K_m$. 
Then $\mathcal T$ packs into $K_{(1+\epsilon) m}$.
\end{lemma}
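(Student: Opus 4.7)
My plan is to turn the given $\ell$-almost packing into a proper packing by extending each partial embedding $h_i$ to the set $R_i$ using a fresh reserve of vertices. Concretely, I would write $K_{(1+\epsilon)m}=K_m\dcup W$ with $|W|=\epsilon m$ (recall $\ell=\epsilon^2 m/(64\Delta^2)$), and for each tree $T_i$ extend $h_i$ to a full embedding $h_i'$ of $T_i$ by placing the vertices of $R_i$ one by one into $W$ via a single greedy sweep: I process the trees in any order, and within each tree the $R$-vertices in any order.

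For a vertex $x\in R_i$ whose already-embedded neighbours have images $v_1,\dots,v_s$ with $s\le\Delta$, I would try to assign $x$ to some $w\in W$ satisfying: (i) $L(w)<C$, where $L(w)$ counts how many $R$-vertices across all trees have been mapped to $w$ so far and $C:=4\ell/\epsilon$ is a load threshold; (ii) $w$ is not already an image of another vertex of $T_i$; and (iii) every edge $wv_k$ is still unused. Since $h_i(V(T_i)\setminus R_i)\subset V(K_m)$ is disjoint from $W$ and every edge of the original almost-packing lies inside $V(K_m)$, conditions (ii) and (iii) alone would guarantee both the injectivity of each extended $h_i'$ and the edge-disjointness of the resulting packing; condition (i) is only there as a book-keeping device for the counting argument below.

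The crux is to verify that such a $w$ always exists. Since the total number of embedding events is at most $\sum_i|R_i|\le 2m\ell$, pigeonhole yields $|\{w\in W\colon L(w)<C\}|\ge|W|-2m\ell/C=\epsilon m/2$. Among these low-load vertices, (ii) forbids at most $|R_i|\le\ell$ of them, and for (iii) I would bound the already-used edges at each $v_k$ separately: by Definition~\ref{def:almostpack}\ref{def:almostpack:RN} at most $\Delta\ell$ if $v_k\in V(K_m)$ (at most $\ell$ trees have a vertex at $v_k$ with an $R$-neighbour, and each such vertex has at most $\Delta$ neighbours in $R_j$), and at most $L(v_k)\Delta\le C\Delta$ if $v_k\in W$. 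Summing over the $\le\Delta$ neighbours of $x$ and plugging in $\ell=\epsilon^2 m/(64\Delta^2)$, these two bounds are respectively $\le\epsilon m/64$ and $\le\epsilon m/16$, so strictly less than $\epsilon m/2$ of the low-load vertices are forbidden and a valid $w$ is always available.

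The main obstacle, and the reason for the specific constants in the statement, is the tight interplay of the three failure counts: raising the threshold $C$ shrinks the overload count but simultaneously inflates the edge-overcrowding count at $W$-neighbours of $x$, and the quadratic dependence on $\Delta$ together with the factor $\epsilon$ in $\ell=\epsilon^2 m/(64\Delta^2)$ is exactly what is needed for all three counts to sit comfortably below $|W|$. Once this balance is arranged the argument is entirely deterministic and no probabilistic machinery is required.
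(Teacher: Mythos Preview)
Your argument is correct and follows essentially the same approach as the paper: reserve $W=V(K_{(1+\epsilon)m})\setminus V(K_m)$, then greedily extend each $h_i$ by placing the vertices of $R_i$ into $W$ one at a time, avoiding overloaded vertices and already-used edges. The only differences are book-keeping details: the paper orders each $R_i$ by breadth-first search (so that at most one already-embedded neighbour lies in $W$, namely the parent) and uses an edge-count threshold $Z_{i,t}=\{w:\deg_{E_{i,t}}(w)\ge\epsilon m/2\}$, whereas you allow an arbitrary order and compensate with a tighter vertex-load threshold $C=4\ell/\epsilon$; both variants yield the same final inequality with room to spare.
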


Lemmas~\ref{lem:ExistsAlmostPacking} and~\ref{lem:almostpack} are proven in Section~\ref{sec:ExistsAlmostPacking} and Section~\ref{sec:almostpack}, respectively.
Based on these two lemmas, it is now an easy task to prove our main theorem.
We remark that here and in the rest of the paper, we shall often use subscripts on constants to clarify which theorem/lemma they originate from: For example $\eps_{\subsc{L\ref{lem:almostpack}}}$ refers to the constant $\eps$ from Lemma~\ref{lem:almostpack}.

\begin{proof}[Proof of Theorem~\ref{thm:tree-packing}]

Let $\eps>0$ and $\Delta\in \mathbb N$ be given.
We define $\eps_{\subsc{L\ref{lem:ExistsAlmostPacking}}}=\eps^2/(256\Delta^2)$ and apply Lemma~\ref{lem:ExistsAlmostPacking} 
with parameters $\eps_{\subsc{L\ref{lem:ExistsAlmostPacking}}}$ and $\Delta$ to obtain $n_0$. 

Now we consider a family $\mathcal T$ of trees satisfying the requirements of the theorem. If $\mathcal T$ contains two trees $F_1$ and $F_2$ of orders at most $n/2$, then we can replace them by a single tree of order $v(F_1)+v(F_2)-1$ that is obtained by identifying
a leaf of $F_1$ and a leaf of $F_2$. Repeating this step, we arrive at a situation where all but at most one tree in $\mathcal T$ have order more than $n/2$. This procedure does not change the maximum degree of the 
trees nor their total number of edges. Hence we have obtained an $(n,\Delta)$-tree family $\mathcal T'$. Observe that now it suffices to pack $\mathcal T'$ into $K_{(1+\eps)n}$.
Feeding the family $\mathcal T'$ to Lemma~\ref{lem:ExistsAlmostPacking}, 
we obtain an $(\eps_{\subsc{L\ref{lem:ExistsAlmostPacking}}} n)$-almost packing of $\mathcal T'$ 
into $K_{(1+\eps_{\subsc{L\ref{lem:ExistsAlmostPacking}}})n}$. 

Now we set $m=(1+\frac{\eps}{4})n \ge (1+\eps_{\subsc{L\ref{lem:ExistsAlmostPacking}}})n$ and 
$\eps_{\subsc{L\ref{lem:almostpack}}}=\eps/2$. Since
\[
\eps_{\subsc{L\ref{lem:ExistsAlmostPacking}}}n = \frac{\eps^2 n}{256\Delta^2} \le \frac{\eps^2 m}{256\Delta^2}
= \frac{(\eps_{\subsc{L\ref{lem:almostpack}}})^2 m}{64\Delta^2},
\]
the family $\mathcal T'$ also has an $\frac{(\eps_{\subsc{L\ref{lem:almostpack}}})^2 m}{64\Delta^2}$-almost packing 
into $K_m$. As the number of trees in $\mathcal T'$ is bounded by $2n\le 2m$, we can apply Lemma~\ref{lem:almostpack} with parameter $\eps_{\subsc{L\ref{lem:almostpack}}}$ 
and obtain a packing of $\mathcal T'$ into $K_{(1+\eps_{\subsc{L\ref{lem:almostpack}}})m}$. 
Since $(1+\eps_{\subsc{L\ref{lem:almostpack}}})m = (1+\frac{\eps}{2})(1+\frac{\eps}{4})n \le (1+\eps)n$, this completes the proof.  
\end{proof}

\section{Notation and preliminaries}

\subsection{Basic notation}
Let $G=(V,E)$ be a graph and $V'\subset V$ and $E'\subset E$. We use the minus
symbol to denote both the removal of vertices and edges from a graph, i.e.,
$G-V'=(V\setminus V',E\cap \binom{V'}{2})$, $G-E'=(V,E\setminus E')$. For vertex sets $U,W\subset V$ we let\marginpar{$e(U), e(U,W)$} $e(U)$ denote the number of edges with both endvertices in~$U$ and let
$e(U,W)=|\{(u,w)\in U\times W: uw\in E\}|$. Here, the edges with both
endvertices in $U\cap W$ are counted twice. The \DEF{common neighbourhood} of vertices $v_1,\ldots,v_k$ in the graph $G$ is defined by $\neighbor_G(v_1,\ldots,v_k)=\{u\in V: uv_1,uv_2,\ldots,uv_k\in E\}$. The 
\DEF{codegree} of $v_1,\ldots,v_k$ is then \marginpar{$\codeg_G(v_1,\ldots,v_k)$}$\codeg_G(v_1,\ldots,v_k)=|\neighbor_G(v_1,\ldots,v_k)|$. In the special case $k=1$, this quantity is called the \DEF{degree} of $v_1$, \marginpar{$\deg_G(v)$}$\deg_G(v_1)=\codeg_G(v_1)$. We drop the subscript when the graph $G$ is understood from the context. The \DEF{density} of~$G$ is defined as $|E|/\binom{|V|}{2}$.

Denote by \marginpar{$\dist(x,y)$}\marginpar{$\dist(U,W)$}$\dist(x,y)$ the length of a shortest path between $x$ and $y$. Here, the distance between vertices lying in different components is defined to be $+\infty$. For two sets $U,W$ of vertices of the same graph we write $\dist(U,W)=\min_{u\in U, w\in W}\dist(u,w)$. In particular, we will use this notation when $U$ and $W$ are edges (i.e., vertex sets of size two).

By a $d$-th \DEF{power} of a graph $G=(V,E)$ we mean its distance-power, that is, a loopless graph, denoted $G^d$, on the vertex set $V$ where two vertices $u$ and $v$ are adjacent if and only if $\dist_G(u,v)\le d$. We refer to the case $d=2$ as \DEF{square}.

Finally, the set of components of~$G$ is denoted by\marginpar{$\component(G)$} $\component(G)$.

\medskip
Generally, we shall use letters  $x$, $y$, and $z$ to denote vertices in trees and forests that we pack. Letters $u$, $v$, and $w$ will be used to denote the vertices in the host graph into which we pack. When we write $a=b\pm c$, we mean that~$a$ has its value in the interval $[b-c,b+c]$\marginpar{$\pm$}. Analogously, by $a\neq b\pm c$ we mean that~$a$ has its value outside the interval $[b-c,b+c]$.

\subsection{Quasirandomness}

Here, we recall the concept of quasirandom graphs, which goes back to
Thomason~\cite{Thomason87:Pseudorandom}, and Chung, Graham, and
Wilson~\cite{ChGraWi89:Quasirandom}.

\begin{definition}[Quasirandom graph]\label{def:subsetQuasi}
  We say that a graph $G$ of order $n$ is \DEF{$\alpha$-quasirandom of
    density $d$} if for every $B\subset V(G)$ we have $e(B)=d\binom{|B|}{2}\pm
  \alpha n^2$ edges.
\end{definition}

Since $e(A,B)=e(A\cup B)+e(A\cap B)-e(A\setminus B)-e(B\setminus A)$, this
definition immediately implies that in a quasirandom graph we also have control
over the number of edges between two
vertex sets.

\begin{observation}\label{obs:tunel}
  In an $\alpha$-quasirandom graph~$G$ on $n$ vertices, for each pair of
  sets $A,B\subset V(G)$ we have $e(A,B)=d|A||B|\pm 4\alpha n^2\pm n$.
\end{observation}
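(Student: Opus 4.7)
The plan is to exploit the identity $e(A,B) = e(A\cup B) + e(A\cap B) - e(A\setminus B) - e(B\setminus A)$ stated just before the observation, and feed each of the four terms on the right-hand side into Definition~\ref{def:subsetQuasi}. Four applications of $\alpha$-quasirandomness, combined via the triangle inequality, give
\[
e(A,B) = d\left(\binom{|A\cup B|}{2} + \binom{|A\cap B|}{2} - \binom{|A\setminus B|}{2} - \binom{|B\setminus A|}{2}\right) \pm 4\alpha n^2,
\]
where each of the four applications contributes an error of at most $\alpha n^2$ and the density factor $d$ is the same throughout.

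The remaining step is a routine algebraic simplification of the combinatorial expression. Setting $a = |A\setminus B|$, $b=|B\setminus A|$, and $c=|A\cap B|$, one expands the four binomial coefficients; after cancellations one obtains
\[
\binom{a+b+c}{2} + \binom{c}{2} - \binom{a}{2} - \binom{b}{2} = (a+c)(b+c) - c = |A||B| - |A\cap B|.
\]
Substituting this back, the main term is exactly $d|A||B|$, and the residual $-d|A\cap B|$ is a nonnegative quantity bounded by $|A\cap B|\le n$ (using $d\le 1$), which is absorbed into the stated $\pm n$ error. Together with the $\pm 4\alpha n^2$ term this yields $e(A,B) = d|A||B| \pm 4\alpha n^2 \pm n$.

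There is no serious obstacle here. The single conceptual input is the inclusion-exclusion identity for ordered-pair edge counts (which accounts for the double-counting of edges inside $A\cap B$), and everything else is immediate from Definition~\ref{def:subsetQuasi} together with elementary binomial-coefficient algebra.
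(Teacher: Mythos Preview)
Your proof is correct and follows exactly the approach the paper indicates: the paper merely states the inclusion-exclusion identity $e(A,B)=e(A\cup B)+e(A\cap B)-e(A\setminus B)-e(B\setminus A)$ and declares the observation immediate, and you have carried out precisely this computation. One trivial wording slip: the residual term $-d|A\cap B|$ is nonpositive rather than nonnegative, but of course its absolute value is at most $|A\cap B|\le n$, which is all you need.
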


Our next easy lemma asserts that induced subgraphs of quasirandom
graphs inherit quasirandomness and density.
\begin{lemma}
\label{prop:SubgraphOfQuasiRandomGraph}
If~$G$ is $\alpha$-quasirandom of density~$d$ and order at most $\frac32 n$, and a set $V'\subset V(G)$ has size $|V'|\ge\eps n$, then $G[V']$ is a $(3\alpha/\eps^2)$-quasirandom graph of density $d\pm 3\alpha/\eps^2$.
\end{lemma}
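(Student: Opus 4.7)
The plan is to apply the $\alpha$-quasirandomness of $G$ twice: first with the subset $V'$ itself, to identify the density of $G[V']$, and then with an arbitrary $B\subset V'$, to compare $e_{G[V']}(B)$ to the prediction from that density. Writing $N:=|V(G)|$ and $N':=|V'|$, the hypotheses give $N\le\tfrac{3}{2}n$ and $N'\ge\eps n$, so the ratio $N/N'$ is at most $3/(2\eps)$; its square is what drives the $O(\eps^{-2})$ blow-up in the error term.

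For the density of $G[V']$, quasirandomness of $G$ with subset $V'$ gives $e_G(V')=d\binom{N'}{2}\pm\alpha N^2$, so the density $d'$ of $G[V']$ satisfies $|d'-d|\le\alpha N^2/\binom{N'}{2}$. For $n\ge n_0$ with $n_0$ large enough (absorbing the difference between $\binom{N'}{2}$ and $(N')^2/2$ as a lower-order term), a direct estimate using $N/N'\le 3/(2\eps)$ bounds this by $3\alpha/\eps^2$, yielding the density claim.

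For the quasirandomness, fix any $B\subset V'$. Since $B\subset V(G)$ as well, quasirandomness of $G$ gives $e_{G[V']}(B)=e_G(B)=d\binom{|B|}{2}\pm\alpha N^2$. Switching the reference density from $d$ to $d'$ introduces an extra error of at most $|d-d'|\binom{|B|}{2}\le\alpha N^2$, so
\[
e_{G[V']}(B)=d'\binom{|B|}{2}\pm 2\alpha N^2 ,
\]
and the final check is the elementary estimate $2\alpha N^2\le(3\alpha/\eps^2)(N')^2$, which again reduces to the bound $N/N'\le 3/(2\eps)$ with a little slack absorbed into $n_0$. This is exactly $(3\alpha/\eps^2)$-quasirandomness of $G[V']$. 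The whole argument is routine bookkeeping of two error terms; there is no substantive obstacle beyond keeping the constants tight enough to arrive at the claimed factor $3/\eps^2$.
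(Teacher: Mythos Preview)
Your overall shape is right, but the bookkeeping does not close, and the fix the paper uses is simpler than what you attempt.

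First, there is no $n_0$ in the statement of the lemma; you cannot appeal to ``$n$ large enough'' to absorb slack. Every inequality must hold as stated.

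Second, your detour through the actual edge density $d'$ of $G[V']$ costs you a factor of~$2$ that you cannot afford. After switching from $d$ to $d'$ you need
\[
2\alpha N^2 \le \frac{3\alpha}{\eps^2}(N')^2,
\qquad\text{i.e.}\qquad
\Big(\frac{N}{N'}\Big)^2 \le \frac{3}{2\eps^2}.
\]
But the hypotheses only give $N/N'\le \tfrac{3}{2\eps}$, hence $(N/N')^2\le \tfrac{9}{4\eps^2}$, and $\tfrac{9}{4}>\tfrac{3}{2}$. So your final inequality is false at the extremal values $N=\tfrac32 n$, $N'=\eps n$. (The same overshoot already appears in your density step: $|d'-d|\le \alpha N^2/\binom{N'}{2}$ is about $\tfrac{9\alpha}{2\eps^2}$ in the worst case, not $\tfrac{3\alpha}{\eps^2}$.)

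The paper avoids this entirely by observing that Definition~\ref{def:subsetQuasi} does \emph{not} require the reference density to be the actual edge density of the graph. So one simply keeps $d$: for any $B\subset V'$,
\[
e_G(B)=d\binom{|B|}{2}\pm\alpha\Big(\tfrac32 n\Big)^2
=d\binom{|B|}{2}\pm\tfrac{9}{4}\cdot\frac{\alpha}{\eps^2}|V'|^2
=d\binom{|B|}{2}\pm\frac{3\alpha}{\eps^2}|V'|^2,
\]
which is exactly $(3\alpha/\eps^2)$-quasirandomness of $G[V']$ with density parameter $d$ (trivially $d=d\pm 3\alpha/\eps^2$). No switch to $d'$, no factor of~$2$, no $n_0$.
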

\begin{proof}
For any $B\subset V'$ we have
  \begin{equation*}\begin{split}
    e_{G}(B)&=d\binom{|B|}{2}\pm\alpha(\tfrac32n)^2
    =d\binom{|B|}{2}\pm\alpha\cdot (\tfrac32)^2\cdot\frac{|V'|^2}{\eps^2}
    =d\binom{|B|}{2}\pm3\frac{\alpha}{\eps^2}|V'|^2\;.
  \end{split}
  \end{equation*}
  Hence~$G[V']$ is a $(3\alpha/\eps^2)$-quasirandom graph of density $d\pm 3\alpha/\eps^2$.
\end{proof}

If~$G=(V,E)$ is a quasirandom graph with density~$d$, we
expect that in~$G$ most sets of $p$ vertices have a common neighbourhood of order roughly $d^p |V|$. 
So, we say that  
a set $\{v_1,\dots,v_p\} \subset V$ is \DEF{$\gamma$-bad}, if
\[
\left|\neighbor(v_1,\dots,v_p)\right| \neq (1\pm \gamma) d^p |V|.
\]
The next lemma states that most vertices of a quasirandom graph are
contained in few bad $p$-sets. We use the following definitions. For a
vertex $v\in V$, let \marginpar{$\badness_{\gamma,p}(v)$}
\[
\badness_{\gamma,p}(v)=\Big|\left\{B\in\tbinom{V}{p-1}:
  \text{$B\cup\{v\}$  is $\gamma$-bad}\right\}\Big|\;.
\]
(In particular, $\badness_{\gamma,1}(v)\in\{0,1\}$, depending on whether $\deg(v)=(1\pm\gamma)d|V|$, or not.) Set
\marginpar{$\bad_{\gamma,\Delta}(G)$}
\[
\bad_{\gamma,\Delta}(G)= \left\{ 
v \in V\::\: 
\badness_{\gamma,p}(v) > \gamma \tbinom{|V|}{p-1} 
\text{ for some } p\in [\Delta]
\right\}.
\]

\begin{lemma}\label{lem:bad}
  For every $\gamma>0$ and every integer $\Delta\ge 1$ there is $\alpha>0$
  such that if~$G=(V,E)$ is an $\alpha$-quasirandom graph of density $d\ge\gamma$ and
  order~$n$, then $|\bad_{\gamma,\Delta}(G)|\le\gamma n$.
\end{lemma}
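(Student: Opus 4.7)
The plan is to establish, by induction on $p \in \{1, \ldots, \Delta\}$, that the number of $p$-element subsets $S \subset V$ whose codegree deviates significantly from $d^p n$ is small; then extract the conclusion by a Markov-style averaging. To make the induction work cleanly I fix $\gamma' := \gamma/(4\Delta)$ and prove the intermediate statement: for each $p \in [\Delta]$, the number of $p$-subsets $S$ with $|\neighbor_G(S)| \neq (1\pm p\gamma')d^p n$ is at most $\beta_p n^p$, where $\beta_p$ tends to $0$ as $\alpha \to 0$ (for fixed $\gamma, d, \Delta$). Since $p\gamma' \le \gamma/4 < \gamma$, every $\gamma$-bad $p$-set is counted by this bound, so the intermediate statement suffices.

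For the \textbf{base case} $p=1$: let $A$ be the set of vertices $v$ with $\deg(v) > (1+\gamma')dn$. Applying Observation~\ref{obs:tunel} to $(A, V)$ gives $(1+\gamma')d|A|n < e(A,V) \le d|A|n + 4\alpha n^2 + n$, hence $|A| \le 5\alpha n/(\gamma' d)$; low-degree vertices are handled symmetrically. For the \textbf{inductive step}, fix a ``good'' $(p-1)$-set $S$, i.e., $|\neighbor(S)| = (1\pm (p-1)\gamma')d^{p-1}n$; since $d \ge \gamma$, we have $|\neighbor(S)| \ge \tfrac12 \gamma^\Delta n$, a linear quantity. Now $|\neighbor(S \cup \{v\})|$ equals the number of neighbours of $v$ inside $\neighbor(S)$. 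Applying Observation~\ref{obs:tunel} to $(A, \neighbor(S))$, where $A$ is the set of $v$ with more than $(1+\gamma'/2)d|\neighbor(S)|$ neighbours in $\neighbor(S)$, yields $|A| = O(\alpha n/((\gamma')^2 \gamma^\Delta))$; the lower tail is symmetric. A brief calculation shows $(1+\gamma'/2)(1+(p-1)\gamma') \le 1+p\gamma'$ (since $(p-1)\gamma' \le 1/4$), so if $v$ lies outside both sets then $S \cup \{v\}$ is good at level $p$. Thus each good $(p-1)$-set has only $O(\alpha n/((\gamma')^2 \gamma^\Delta))$ bad extensions, while each bad $(p-1)$-set contributes at most $n$ extensions. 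Accounting for each $p$-set being produced $p$ times yields a recursion of the form $\beta_p \le \beta_{p-1}/p + C\alpha/(p!(\gamma')^2\gamma^\Delta d)$, and choosing $\alpha$ small enough ensures $\beta_p \le \gamma^2/\Delta$ for every $p \in [\Delta]$.

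For the \textbf{conclusion}: at each level $p$, the sum $\sum_v \badness_{\gamma,p}(v)$ counts each $\gamma$-bad $p$-set exactly $p$ times and is therefore at most $p\beta_p\binom{n}{p} \le \beta_p n\binom{n}{p-1}$. Markov's inequality then bounds the number of $v$ with $\badness_{\gamma,p}(v) > \gamma\binom{n}{p-1}$ by $\beta_p n/\gamma \le \gamma n/\Delta$, and summing over $p \in [\Delta]$ gives $|\bad_{\gamma,\Delta}(G)| \le \gamma n$, as required.

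The \textbf{main obstacle} is precisely the delicate bookkeeping of precision in the inductive step: naively, $p$ successive applications of quasirandomness compound multiplicative errors to order $(1+\gamma)^p$, which exceeds $1+\gamma$ already for $p=2$. Working at the sharper precision $\gamma' = \gamma/(4\Delta)$ allows these errors to telescope through all $\Delta$ levels. The density lower bound $d \ge \gamma$ enters only to guarantee $|\neighbor(S)| \ge \tfrac12\gamma^\Delta n$, which makes Observation~\ref{obs:tunel} nontrivial when applied to neighbourhoods of $(p-1)$-sets.
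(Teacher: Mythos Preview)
Your proposal is correct and follows essentially the same approach as the paper: both argue by induction on $p$ that few $p$-sets have the wrong common neighbourhood, handling the base case and the inductive step via Observation~\ref{obs:tunel} applied to degrees into $\neighbor(S)$, and then extract the vertex bound by averaging. The only cosmetic differences are that the paper uses a recursively defined increasing sequence $\gamma_1<\dots<\gamma_\Delta$ of tolerances (rather than your linear schedule $p\gamma'$) and phrases the induction hypothesis directly in terms of vertices with large $\badness_{\gamma_p,p}$ rather than counting bad $p$-sets and applying Markov only at the end; one minor slip in your write-up is switching between $\beta_p n^p$ and $\beta_p\binom{n}{p}$ in the final paragraph, but the missing $p!$ factor is harmless since $p\le\Delta$.
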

\begin{proof}
  Let $\alpha\le 1/(10\Delta^2)$ be small enough so that for $\beta=\frac12\sqrt{\alpha}$ and $\gamma_1\le\dots\le\gamma_\Delta$ defined by
  \begin{equation*}
    \gamma_p = \begin{cases}
      \sqrt{\frac{10\beta}{d}} & p=1 \\
      \sqrt{4p\gamma_{p-1} + \frac{20p\beta}{d^p\gamma_{p-1}}} \quad & 1<p\le\Delta \,,
    \end{cases}
  \end{equation*}
  we have $\gamma_\Delta\le\min\{\gamma/\Delta,1/2\}$. Testing over two-element sets in Definition~\ref{def:subsetQuasi}, we get that if $n<\max\{2\Delta,\beta^{-1}\}$ then $G$ is either complete or empty. Hence we may assume that $n\ge \max\{2\Delta,\beta^{-1}\}$ in the following.

  We prove by induction on~$p$ that 
  \begin{equation}\label{eq:IMP}
  \mbox{
  at most $\gamma_p n$ vertices~$v$ of
  $G$ satisfy $\badness_{\gamma_p,p}(v)>\gamma_p\tbinom{n}{p-1}$.
  }
  \end{equation}
 Let us
  first consider the base
  case $p=1$. Let~$V^+$ be the set of
  vertices~$v$ with $\deg(v)>(1+\gamma_1)dn$. We have $e(V^+,V)>|V^+|
  (1+\gamma_1)dn$. But since~$G$ is $\alpha$-quasirandom we have by Observation~\ref{obs:tunel} that
  $e(V^+,V)\le d|V^+|n+4\alpha n^2+n\le d|V^+|n+5\beta n^2$. Putting these bounds together, we get $|V^+|<5\beta n/(d\gamma_1)$. Similarly for the
  set~$V^-$ of vertices~$v$ with $\deg(v)<(1-\gamma_1)dn$ we have
  $|V^-|<5\beta n/(d\gamma_1)$. Thus there are at most $10\beta 
  n/(d\gamma_1)=\gamma_1 n$ vertices~$v$ with
  $\badness_{\gamma_1,1}(v)=1>\gamma_1\tbinom{n}{0}$.

  Now consider $p>1$ and assume that~\eqref{eq:IMP} holds for $p-1\ge 1$. The number of $\gamma_{p-1}$-bad sets in $\binom{V}{p-1}$ is
  \begin{equation}\label{eq:bad}
  \frac{1}{p-1}\sum_{v\in V}\badness_{\gamma_{p-1},p-1}(v)\le
  \frac{1}{p-1}\left(\gamma_{p-1}n\binom{n}{p-2}+n\gamma_{p-1}\binom{n}{p-2}\right)\le 4\gamma_{p-1}\binom{n}{p-1}\;,
  \end{equation}
  where we used $n/2\le n-p+1$.  Fix an arbitrary set
  $\{v_1,\dots,v_{p-1}\}$ in $\binom{V}{p-1}$ that is not
  $\gamma_{p-1}$-bad. Hence for $W=\neighbor(v_1,\dots,v_{p-1})$ we have
  $|W|=(1\pm\gamma_{p-1})d^{p-1}n$.  Let~$V^+$ be the set of vertices~$v\in
  V\setminus\{v_1,\dots,v_{p-1}\}$
  with $|\neighbor(v)\cap W|>(1+\gamma_{p-1})d|W|$. We have
  $|V^+|(1+\gamma_{p-1})d|W|<e(V^+,W)\le d|V^+||W|+5\beta n^2$ and
  hence $|V^+|<5\beta n^2/(\gamma_{p-1}d|W|)\le 5\beta n^2/(\gamma_{p-1}d\frac12d^{p-1}n)=
  10\beta n/(d^p\gamma_{p-1})$. Similarly, for the set $V^-$ of vertices~$v$ such that
  $|\neighbor(v)\cap W|<(1-\gamma_{p-1})d|W|$ we have $|V^-|<10\beta n/(
  d^p\gamma_{p-1})$. Let~$v$ be an arbitrary vertex in $V\setminus (V^+\cup V^-\cup \{v_1,\ldots,v_{p-1}\})$. Then
  \begin{equation*}
    |\neighbor(v,v_1,\dots,v_{p-1})| = (1\pm\gamma_{p-1})d|W| = (1\pm
    \gamma_p)d^pn \,,
  \end{equation*}
  and therefore $\{v,v_1,\dots,v_{p-1}\}$ is not $\gamma_p$-bad. Hence,
  by~\eqref{eq:bad}, the number of $\gamma_p$-bad $p$-tuples is at most
  \begin{equation*}
    \Big(4\gamma_{p-1} \binom{n}{p-1}\Big) n
    + \binom{n}{p-1}\cdot 2\frac{10\beta n}{d^p\gamma_{p-1}}
    =
   \frac{\gamma_p^2}p n\binom{n}{p-1}\;.   
  \end{equation*}
   Consequently, for at most $\gamma_{p}n$ vertices $v\in V$, we have 
  $\badness_{\gamma_p,p}(v)>\gamma_p \binom{n}{p-1}$. This gives~\eqref{eq:IMP}.

  The bound $|\bad_{\gamma,\Delta}(G)|\le\gamma n$ follows by summing~\eqref{eq:IMP} over $p=1,\ldots,\Delta$. 
\end{proof}

As our next lemma shows, this implies that we need to delete only few
vertices from a quasirandom graph to obtain a graph~$G$ in which
$\bad_{\gamma,\Delta}(G)=\emptyset$. 

\begin{definition}[Superquasirandom graph]
  We say that a graph $G$ is \DEF{$(\gamma,\Delta)$-super\-quasi\-ran\-dom} if
  we have $\bad_{\gamma,\Delta}(G)=\emptyset$.
\end{definition}

\begin{lemma}\label{lem:superquasirandom}
  For every $\gamma>0$ and every integer $\Delta\ge 1$ there is $\alpha>0$
  such that if~$G$ is an $\alpha$-quasirandom graph of density $d>\gamma$
  and order $m$, then~$G$ contains an induced
  $(\gamma,\Delta)$-superquasirandom subgraph  of order at least
  $(1-\gamma)m$ and density $d\pm \gamma$.
% \footnote{JB: I need $4\alpha$ here instead of $2\alpha$.}
\end{lemma}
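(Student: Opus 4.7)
The plan is to apply Lemma~\ref{lem:bad} with a much smaller parameter $\gamma'\ll \gamma$, delete the resulting bad set $S=\bad_{\gamma',\Delta}(G)$, and verify that the induced subgraph $G'=G-S$ has all three required properties. Concretely, I would choose $\gamma'=c\gamma^{\Delta+2}$ for a sufficiently small constant $c=c(\Delta)$, apply Lemma~\ref{lem:bad} with parameters $\gamma'$ and $\Delta$ to obtain $\alpha_1>0$, and set $\alpha=\min\{\alpha_1,\gamma^{2}/C\}$ for a large constant~$C$. Then $|S|\le \gamma' m\le \gamma m$, so $|V(G')|=:m'\ge(1-\gamma')m\ge(1-\gamma)m$, and Lemma~\ref{prop:SubgraphOfQuasiRandomGraph} (applied with $\eps=1/2$, say, using $d\ge\gamma$ and $m'\ge m/2$) gives that $G'$ has density $d'=d\pm\gamma$ and is itself quasirandom with a slightly degraded parameter.

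The core of the proof is establishing $(\gamma,\Delta)$-superquasi\-ran\-dom\-ness of $G'$. Fix any $v\in V(G')$ and any $p\in[\Delta]$. I split the $(p-1)$-subsets $B\subset V(G')\setminus\{v\}$ into two types according to whether $B\cup\{v\}$ is $\gamma'$-bad \emph{in $G$}. For Type~1 (bad in $G$): since $v\notin S=\bad_{\gamma',\Delta}(G)$, there are at most $\gamma'\binom{m}{p-1}\le 2^{\Delta}\gamma'\binom{m'}{p-1}\le \tfrac{\gamma}{2}\binom{m'}{p-1}$ such sets, provided $\gamma'\le\gamma/2^{\Delta+1}$. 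For Type~2 (not $\gamma'$-bad in $G$): we have $|\neighbor_G(B\cup\{v\})|=(1\pm\gamma')d^{p}m$, and since $G'$ is obtained by removing at most $\gamma' m$ vertices,
\[
|\neighbor_{G'}(B\cup\{v\})|=(1\pm\gamma')d^{p}m\pm\gamma' m.
\]
I then want to compare this to $(1\pm\gamma)(d')^{p}m'$. Using $d'=d\pm O(\alpha)$, $m'=(1\pm\gamma')m$, and the crucial lower bound $d\ge\gamma$ (so $d^{p}\ge\gamma^{\Delta}$), a routine check shows that both expressions agree within a factor of $(1\pm\gamma)$ whenever $\gamma'\ll\gamma^{\Delta+1}$ and $\alpha\ll\gamma^{2}$. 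Hence every Type~2 set is \emph{not} $\gamma$-bad in $G'$.

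Combining the two cases yields $\badness^{G'}_{\gamma,p}(v)\le\tfrac{\gamma}{2}\binom{m'}{p-1}<\gamma\binom{m'}{p-1}$ for every $p\in[\Delta]$, so $v\notin\bad_{\gamma,\Delta}(G')$. Since $v$ was arbitrary, $\bad_{\gamma,\Delta}(G')=\emptyset$, establishing superquasirandomness.

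The main obstacle is the Type~2 estimate: the additive error $\gamma' m$ from discarding $S$ must be swallowed by the multiplicative tolerance $\gamma\cdot d^{p}m$, which can be as small as $\gamma^{\Delta+1}m$. This forces $\gamma'$ (and hence $\alpha$) to depend polynomially on $\gamma$ with an exponent growing in $\Delta$, but the dependencies are harmless since the constants in Lemma~\ref{lem:bad} are allowed to be arbitrary functions of $\gamma$ and $\Delta$.
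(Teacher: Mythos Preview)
Your proposal is correct and follows essentially the same approach as the paper: delete $S=\bad_{\gamma',\Delta}(G)$ for a suitably small $\gamma'$, then verify that every $(p-1)$-set $B$ that was not $\gamma'$-bad in $G$ remains not $\gamma$-bad in $G'$, while the remaining sets are too few to matter. Your choice $\gamma'=c\gamma^{\Delta+2}$ is in fact slightly cleaner than the paper's $\gamma'=\gamma d^\Delta/200$, since yours depends only on $\gamma$ and $\Delta$ and not on the (a priori unknown) density $d$.
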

\begin{proof}
  We can assume that $\gamma<\frac12$. Let $\alpha'$ be given by
  Lemma~\ref{lem:bad} for input parameters $\gamma'=\gamma d^\Delta/200$ and~$\Delta$,
  and set $\alpha=\min\{\alpha',d\gamma/(800\cdot 2^\Delta)\}$.  Now suppose that
  $G$ is an $\alpha$-quasirandom graph of density~$d$ and order $m$. By
  Lemma~\ref{lem:bad}, we have $|\bad_{\gamma',\Delta}(G)|\le
  \gamma'm$.

  We claim that the induced subgraph~$G'$ on the vertex set
  $V'=V\setminus\bad_{\gamma',\Delta}(G)$ satisfies the assertion of the
  lemma. Indeed, $|V'|\ge(1-\gamma')m$ and since~$G$ is
  $\alpha$-quasirandom the density~$d'$ of~$G'$ satisfies
  $d'=(d\binom{|V'|}{2}\pm\alpha n^2)/\binom{|V'|}{2}=d\pm
  4\alpha=d\pm\gamma$. It
  remains to show that $\bad_{\gamma,\Delta}(G')=\emptyset$.
  By the definition of~$G'$, for each $v\in V'$ and $p\le\Delta$
  all but at most $\gamma'\binom{|V|}{p-1}$ sets
  $\{v_1,\ldots,v_{p-1}\}\in\binom{V'}{p-1}$ are such that
  $\{v,v_1,\dots,v_{p-1}\}$ is not $\gamma'$-bad in~$G$. But such sets
  $\{v,v_1,\dots,v_{p-1}\}$ are not $\gamma$-bad in~$G'$ either because
  \begin{equation*}\begin{split}
    |\neighbor_{G'}(v,v_1,\dots,v_{p-1})|&=|\neighbor_G(v,v_1,\ldots,v_{p-1})|\pm |\bad_{\gamma',\Delta}(G)|
    =(1\pm\gamma')d^p m\pm\gamma' m\\
    & =(1\pm\tfrac1{100}\gamma)d^p m
    =(1\pm\tfrac1{100}\gamma)(d'\pm 4\alpha)^p(1\pm\gamma')|V'|\\
    &=\big(1\pm 10(\tfrac1{100}\gamma+ 2^p\cdot 4\alpha\tfrac{1}{d'}+\gamma')\big)(d')^p|V'|
    % &=(1\pm \tfrac\gamma4)\;d^p\;(1\pm\tfrac{\gamma}{5})v(G')\pm\tfrac{d^\Delta\gamma m}{10}\\
    % &=(1\pm \tfrac\gamma4)\;(1\pm \tfrac\gamma{10\Delta})^p(d')^p
    % \;(1\pm\tfrac{\gamma}{5})v(G')\pm\tfrac{d^\Delta\gamma m}{10}\\
    =(1\pm \gamma)(d')^p|V'|\;,
  \end{split}\end{equation*}
  where we use $2^p\cdot 4\alpha\frac{1}{d'}\le\gamma/100$.
  Hence $\bad_{\gamma,\Delta}(G')=\emptyset$.
\end{proof}

The next easy lemma asserts that very dense graphs are
quasirandom.

\begin{lemma}\label{lem:densequasirandom}
For any $\alpha>0$ there exist
$n_0=n_{L\ref{lem:densequasirandom}}(\alpha)$ such that the
following holds for any $n\ge n_0$. Suppose that $G$ was obtained from the
complete graph~$K_n$ by deleting at most $n$ edges. Then~$G$ is
$\alpha$-quasirandom.
\end{lemma}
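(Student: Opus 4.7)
The plan is to take the density~$d$ of~$G$ to be its actual edge density and then to show directly that for every $B\subset V(G)$ the quantity $|e_G(B)-d\binom{|B|}{2}|$ is bounded by the number of deleted edges, which is at most $n$. Since we aim for an additive error of $\alpha n^2$, this will suffice as soon as $n\ge 1/\alpha$.

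\noindent\textbf{Key steps.} First, I would let $D\subset\binom{V(G)}{2}$ denote the (at most $n$) edges of $K_n$ missing from $G$, set $m=|D|\le n$, and set
\[
d=\frac{e(G)}{\binom{n}{2}}=1-\frac{m}{\binom{n}{2}}\,,
\]
so in particular $1-d = m/\binom{n}{2}$. Next, for an arbitrary set $B\subset V(G)$ of size $b$, I would write
\[
e_G(B)=\binom{b}{2}-|D\cap\tbinom{B}{2}|\,,
\]
and therefore
\[
e_G(B)-d\binom{b}{2} = (1-d)\binom{b}{2} - |D\cap\tbinom{B}{2}|
= \frac{m}{\binom{n}{2}}\binom{b}{2} - |D\cap\tbinom{B}{2}|\,.
\]
Both summands on the right lie in the interval $[0,m]\subset[0,n]$, so the difference has absolute value at most $n$. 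Finally, I would choose $n_0=\lceil 1/\alpha\rceil$, so that for every $n\ge n_0$ we have $n\le \alpha n^2$, and conclude that
\[
e_G(B)=d\binom{b}{2}\pm n \,=\, d\binom{b}{2}\pm\alpha n^2,
\]
which is precisely the condition in Definition~\ref{def:subsetQuasi} for $G$ to be $\alpha$-quasirandom of density $d$.

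\noindent\textbf{Main obstacle.} There really is no obstacle here: the lemma is a straightforward consequence of the fact that removing $o(n^2)$ edges from the complete graph can change~$e(B)$ only by an additive term that is small on the scale of~$n^2$. The only mild subtlety is to remember that the definition of quasirandomness allows us to choose the density~$d$, so we must take $d$ to match the actual edge density of~$G$ (rather than trying to prove quasirandomness with $d=1$, which would fail when $m>0$).
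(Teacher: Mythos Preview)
Your proof is correct. The paper does not actually give a proof of this lemma (it is simply labelled ``easy'' and stated without proof), so there is nothing to compare against; your argument is exactly the natural one the authors have in mind.
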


\subsection{Homomorphisms}
Let~$H$ and~$G$ be graphs.
A \DEF{homomorphism}~$h$ from~$H$ to~$G$ is an edge-preserving map
from~$V(H)$ to~$V(G)$, i.e., for every $xy\in E(H)$ we have
$h(x)h(y)\in E(G)$. 
By $h\colon H\to G$ or simply $H\to G$ we denote the fact that there
is a homomorphism~$h$ from~$H$ to~$G$.
Moreover, we write \marginpar{$V(h)$}$V(h)=\{h(v)\colon v\in V(H)\} \subset V(G)$
for the image of $h$, and
\marginpar{$E(h)$}$E(h)=\{h(x)h(y)\colon xy\in E(H) \}\subset E(G)$ for the image of
the edges of $H$. 

We say that a map $h$ is a \DEF{partial homomorphism} of $H$ to $G$ if
there exists a set $Y\subset V(H)$ such that $h$ is a homomorphism of $H-Y$
to $G$. The set $Y$ is called vertices \DEF{skipped} by $h$. We define \marginpar{$V(h)$}$V(h)=\{h(v)\colon v\in V(H)-Y\} \subset V(G)$, and \marginpar{$E(h)$}$E(h)$ analogously. We denote the fact that $h$ is a partial homomorphism by \marginpar{$H\rightsquigarrow G$}$h: H\rightsquigarrow G$.

In the language of homomorphisms, a packing of a family
$(H_1,\dots,H_k)$ of graphs into $G$ is a family of injective
homomorphisms $(h_i\colon H_i\to G)_{i\in[k]}$ with mutually
disjoint images of the edge sets.

Let $(h_i)_{i\in[k]}$ be a family of homomorphisms~$h_i\colon H_i\to
G$ with $i\in[k]$ (we assume that the graphs $H_i$ live on different vertex sets). Then the
\emph{union} $\bigcup_{i\in[k]}h_i$ of $(h_i)_{i\in[k]}$ is the map
$h\colon\bigcup_{i\in[k]}V(H_i)\to G$ defined by $h(x)=h_i(x)$ for
all vertices $x\in V(H_i)$ and all $i\in[k]$. 

% Let $(h_C\colon C\to G)_{C\in\component(H)}$ be a family of
% homomorphisms. This family naturally gives rise
% to the homomorphism $h\colon H\to G$ defined by $h(x):=h_C(x)$ for all
% vertices $x\in C$ and all $C\in\component(H)$. We also call~$h$
% the \emph{joint homomorphism} of $(h_C\colon C\to
% G)_{C\in\component(H)}$ and denote it by
% $\bigoplus_{C\in\component(H)} h_C$.

\subsection{Trees}
The pair $(F,X)$ is a \DEF{rooted forest} if $F$ is a forest and $X\subset
V(F)$ contains exactly one vertex of every tree $C\in\component(F)$
of~$F$, which we call \DEF{root} of~$C$. If~$F$ is a tree with root~$x$ then we
also write $(F,x)$ for $(F,X)=(F,\{x\})$ and say that $(F,x)$ is a \DEF{rooted
tree}.
In a rooted forest $(F,X)$ we can speak of \DEF{children},
\DEF{parents}, \DEF{ancestors}, and \DEF{descendants} of vertices.
For a vertex $y$, we let $F(y)$ be the maximal subtree
of~$F$ with root~$y$.

\subsubsection{Cutting trees}
The central notion of this section is that of a $\rho$-balanced $r$-level
partition defined below. 
\begin{definition}[balanced level partition]
Given a rooted tree $(T,x)$, we say that a partition $\mathcal
P=(L^1,\ldots,L^r)$ of $V(T)$ is a \DEF{$\rho$-balanced $r$-level
partition} if
\begin{enumerate}[label=\abc]\label{def:blp}
% \item $U_1, U_2,\ldots, U_r$ forms a partition of $V(T)$,
  \item\label{def:blp:a}
    $|L^i| = (1\pm\rho/2)\frac{v(T)}{r}$
		%$\sum_{j\le i} |L^j|=(1\pm\rho/2)\frac ir v(T)$ for
    for every $i\in [r]$, and
  \item\label{def:blp:b} 
    for each $i\in[r]$, the parent of each non-root vertex in $L^i$ lies in the set $\bigcup_{j\le i}L^j$.
\end{enumerate}
The forest $T[L^i]$ is called \DEF{level} $i$ of
the partition $\mathcal P$. For a vertex~$y$ of $T[L^i]$ or a tree
$C\in\component(T[L^i])$ we say that~$y$ or~$C$ are in level~$i$ of $\cP$.
\end{definition}

The following lemma states that bounded-degree trees have balanced level
partitions with a bounded number of components in each level.

\begin{lemma}\label{lem:cuttree}
  Let $(T,x)$ be a rooted tree with maximum degree at most $\Delta$ and $v(T)\ge
  \frac{4\Delta r}{\rho}$ with $0<\rho < \frac{1}{4r} $ and $r\in\mathbb N$. Then there
  is a $\rho$-balanced $r$-level partition of $(T,x)$ such that every level has
  at most $\frac{8\Delta}{\rho}$ components. 
\end{lemma}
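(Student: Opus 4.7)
My plan is to construct the levels iteratively. I maintain a rooted subtree $S_{i-1}$ of $T$ containing $x$ (with $S_0 := \emptyset$), and set $L^i := S_i \setminus S_{i-1}$ for each $i$. Let $t := v(T)/r$ and $s_\pm := (1\pm\rho/2)t$; the hypothesis $v(T) \geq 4\Delta r/\rho$ ensures $\rho t \geq 4\Delta$, which provides the slack needed for the greedy procedures below. The central tool is a subtree extraction sublemma: given a rooted tree $(C,y)$ of maximum degree at most $\Delta$ with $|C| \geq M_+$, and any integers $M_- \leq M_+$, one can greedily construct a rooted subtree $P \subseteq C$ containing $y$ with $|P| \in [M_-, M_+]$. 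Starting with $P := \{y\}$, while $|P| < M_-$ I either (a) attach some frontier subtree $F(c)$ of $P$ in $C$ whose size satisfies $|F(c)| \leq M_+ - |P|$, or (b), when no such $c$ exists, extend $P$ by adding a single frontier vertex $c$ (exposing $c$'s children as the new frontier). Each iteration strictly increases $|P|$ and the upper bound $|P| \leq M_+$ is preserved, so the procedure terminates with $|P|$ in the target window.

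For each $i < r$, examine the components $\mathcal{B}$ of the rooted forest $T - S_{i-1}$ (each rooted at a boundary vertex of $S_{i-1}$, or at $x$ when $i = 1$). In the favorable case where some $C^* \in \mathcal{B}$ has $|C^*| \geq s_+$, apply the extraction sublemma to $C^*$ with $M_\pm = s_\pm$: this produces $L^i$ as a single rooted subtree of $C^*$, contributing exactly one component. In the combining case where every component of $\mathcal{B}$ is smaller than $s_+$, sort $\mathcal{B}$ by decreasing size and greedily add components to $L^i$: for each $C \in \mathcal{B}$ in sorted order, add $C$ in full if $|L^i| + |C| \leq s_+$; if a component $C'$ is ever reached for which $|L^i| + |C'| > s_+$ while $|L^i| < s_-$, invoke the extraction sublemma on $C'$ with target window $[s_- - |L^i|, s_+ - |L^i|]$ to obtain a final carved piece. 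Finally set $L^r := V(T) \setminus S_{r-1}$; to ensure $|L^r| \in [s_-, s_+]$ I would tighten the intermediate target windows so that the cumulative deviation $|S_i| - it$ over $r - 1$ steps stays within $\rho t / 2$, which is feasible because the extraction sublemma tolerates any nonnegative target-window width.

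The main technical obstacle is bounding the number of components of $L^i$. In the favorable case the bound of $1 \leq 8\Delta/\rho$ is immediate. In the combining case, the intended argument is that any component not absorbed must have size exceeding the budget gap $s_+ - |L^i| \geq \rho t$, which forces all "skipped" components to be large and limits the number of full components that can be absorbed before the final carved piece. The delicate scenario is when the residual forest consists of many tiny subtrees---for example, leaves hanging off $S_{i-1}$---because a naive greedy can then produce as many as $\Theta(t)$ components, far exceeding $8\Delta/\rho$. Resolving this requires refining the algorithm so that the residual forest always retains at least one component of size $\geq s_+$, ensuring the favorable case applies at every step; this in turn demands a careful choice of extraction at earlier levels so as not to fragment $T$ prematurely, and I expect this invariant-maintenance to be the most subtle part of the proof.
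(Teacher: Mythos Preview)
Your proposal has a genuine gap that you yourself flag but do not close. In the combining case, when $T-S_{i-1}$ consists of many tiny pieces (say, leaves hanging off $S_{i-1}$), your $L^i$ can have $\Theta(t)$ components. Your proposed remedy is to maintain the invariant that $T-S_{i-1}$ always retains a component of size at least $s_+$, but you give no mechanism for this: the extraction sublemma removes a rooted subtree $P$ from $C^*$ and leaves behind the frontier subtrees hanging off $P$, with no control on their individual sizes. Even if you could maintain the invariant through step $r-1$ (so that each $L^1,\dots,L^{r-1}$ has one component), you would still need to bound the components of the leftover $L^r=T\setminus S_{r-1}$, and nothing in your outline addresses that; asking $T\setminus S_{r-1}$ to be connected would force $S_{r-1}$ to be the complement of a single subtree $F(z)$, and there is no reason any $|F(z)|$ lands in the narrow window $[v(T)-|S_{r-1}|-\rho t/2,\,v(T)-|S_{r-1}|+\rho t/2]$.

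The paper sidesteps all of this by reversing the order of operations. Rather than carving out levels top-down, it first chops $T$ into small pieces $C_1,\dots,C_\ell$, each of size at most $\xi v(T)$ with $\xi=\rho/(2r)$: repeatedly walk from the root down to the child $y$ maximising $|F(y)|$, stop the first time $|F(y)|\le\xi v(T)$, set the next piece to be $F(y)$, cut it off, and repeat on what remains. Since the parent of $y$ had subtree exceeding $\xi v(T)$ and at most $\Delta$ children, one gets $|C_j|\ge\xi v(T)/\Delta-1$ automatically for all but the last piece. The pieces are then grouped greedily into $r$ levels of size $(1/r\pm\xi)v(T)$, respecting the tree order, and the component count per level is at most $|L^i|/\min_j|C_j|+1\le 8\Delta/\rho$. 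The minimum piece size comes for free from this bottom-up cut; your top-down construction has no analogous guarantee and is forced to fight fragmentation step by step.
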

\begin{proof}
  Let $\xi=\rho/(2r)$.
  We first partition~$T$ into a family $\mathcal C=(C_i)_{i\in[\ell]}$ (for some $\ell$) 
	of rooted connected
  components $C_i$ of $T$ so that
  \begin{equation}\label{eq:cuttree:Ci}
    v(C_i)\in\big[\tfrac1\Delta \xi v(T)-1,\xi v(T)\big]
    \text{ for all $i\in[\ell-1]$}
    \qquand
    v(C_\ell)\le \xi v(T)\,.
  \end{equation}
  Clearly, such a partition can be obtained by the following simple algorithm. 
Starting with the root, always proceed downwards in the tree order, at each step choosing the child $y$ maximising $|F(y)|$ until 
$|F(y)|\le \xi v(T)$ is satisfied for the first time. This gives the upper bound 
	in~\eqref{eq:cuttree:Ci}, and since this upper bound was not satisfied when we were looking at the parent of $y$, the lower bound in~\eqref{eq:cuttree:Ci} must also be satisfied.   
	%
	%
	%Climb up
  %the tree from the root, at each vertex~$y$ encountered on the way climbing to
  %the largest child-subtree~$C$, until~$v(C)$ satisfies the requirement
  %in~\eqref{eq:cuttree:Ci}.
  %Clearly we stop climbing when the subtree of~$T$ rooted at~$y$ still has more than~$\xi
  %v(T)$ vertices but this holds for none of the child-subtrees of~$y$. 
	In this  way, we obtain the first component~$C_1=C$, which we cut off from~$T$ and
  then repeat in order to obtain the remaining components.

  We now inductively define the sets $L^1,\dots,L^r$ where each set $L^i$ will
  be the union $L^i=\bigcup_{C\in\cC_i}V(C)$ for a suitable set $\cC_i$ of components.
  Suppose we have already chosen $L^1,\dots,L^{i-1}$ together with 
	$\cC_1,\dots,\cC_{i-1}$. Now choose $\cC_i \subset \cC\setminus \bigcup_{j<i} \cC_j$ 
	satisfying the following two properties:
	\begin{itemize}
	\item
	for every $C\in\cC_i$ and for every $C'\in\cC\setminus \bigcup_{j<i} \cC_j$ 
	that is above $C$ in the tree order, we must have $C'\in \cC_i$,
	\item
	we have $|L_i|=\sum_{C\in\cC_i}|V(C)|=(\frac 1r \pm \xi) v(T) 
	= (1\pm \frac{\rho}{2}) \frac{v(T)}{r}$.	
	\end{itemize}
This choice of $\cC_i$ is clearly possible by the upper bound given 
in~\eqref{eq:cuttree:Ci}.
	Both Conditions~\ref{def:blp:a} and~\ref{def:blp:b}
in Definition~\ref{def:blp} are satisfied by construction and 
	it remains to bound the number $|\cC_i|$ of components in each level $T[L^i]$.
  First observe that due to the assumption $v(T) \ge 4\Delta r/\rho$, we know that 
	\begin{equation}
	\frac{\xi v(T)}{2\Delta} = \frac{\rho v(T)}{4\Delta r} \ge 1\;.
	\label{eq:vTbig}
	\end{equation}
	Therefore we get
  \begin{equation*}
  \begin{split}
    |\cC_i|\le \frac{|L^i|}{\min_{j\in[\ell-1]}|C_j|} + 1
      %\leBy{\eqref{eq:cuttree:Ci},\eqref{eq:cuttree:L}}
        \le \frac{(\tfrac{1}{r}+\xi)v(T)}{\tfrac{1}{\Delta}\xi v(T)-1}+1
				\leBy{\eqref{eq:vTbig}} \frac{(\tfrac{1}{r}+\xi)v(T)}{\tfrac{1}{2\Delta}\xi v(T)}+1
				= \frac{2\Delta}{\xi r} + 2\Delta +1 \le \frac{8\Delta}{\rho} \;,
      %=\frac{\frac{1+\rho}{r}v(T)}{\frac\rho{2\Delta r}v(T)-1}+1 \\
      %&\le\frac{\frac{1+\rho}{r}v(T)}{\frac\rho{4\Delta r}v(T)}+1
      %\le\frac{8\Delta}{\rho}\;.
  \end{split}
  \end{equation*}
  and hence the partition $V(T)=L^1\dcup\cdots\dcup L^r$ satisfies all requirements of the lemma.
\end{proof}

\subsection{Probabilistic tools} We write $\mathrm{Be}(p)$ for the Bernoulli distribution with success probability $p$, and we write $\mathrm{Bin}(p,n)$ for the binomial distribution with $n$ trials and success probability $p$.

We will use the following two versions of the Chernoff bound~\cite[(2.9) and
(2.12)]{JLR:RandomGraphs}. Let~$X\in\mathrm{Bin}(n,p)$, and  $\mu\ge\Exp [X]$, $\delta\in(0,\frac32)$, $t>0$. We have that
\begin{align}
\label{eq:Ch2}
\Prob\left[X\ge (1+\delta) \cdot \mu \right]&\le 2\exp\left(-\delta^2\mu/3\right) \quad\text{and}\\
\label{eq:Ch1}
\Prob\left[X\ge \mu+t\right]&\le \exp\left(-\frac{2t^2}{n}\right)\;.
\end{align}
Moreover, for every $\delta'>1$ and every $t\in\mathbb{R}$ with $t\ge \delta'\Exp [X]$ 
there exists $\delta''>0$ such that 
\begin{align}
\label{eq:Ch3}
\Prob\left[X\ge t\right]&\le \exp\left(-\delta''t\right)\;.
\end{align}
Obviously, these bounds also hold for random variables which are stochastically dominated by~$X$.

Suppose that $\Omega=\prod_{i=1}^k \Omega_i$ is a product probability space. A measurable function $f:\Omega\rightarrow \mathbb R$ is said to be \DEF{$C$-Lipschitz} if for each $\omega_1\in\Omega_1,\omega_2\in\Omega_2,\ldots,\omega_i,\omega'_i\in\Omega_i,\ldots,\omega_k\in\Omega_k$ we have $$|f(\omega_1,\omega_2,\ldots,\omega_i,\ldots,\omega_k)-f(\omega_1,\omega_2,\ldots,\omega'_i,\ldots,\omega_k)|\le C\;.$$
McDiarmid's Inequality, \cite{McDiarmid:BoundedDiff} states that Lipschitz functions are concentrated around their expectation.
\begin{lemma}[McDiarmid's Inequality]\label{lem:McDiarmid}
Let $f:\Omega\rightarrow \mathbb R$ be a $C$-Lipschitz function defined on a product probability space $\Omega=\prod_{i=1}^k \Omega_i$. Then for each $t>0$ we have
$$\Prob\big[|f-\Exp[f]|>t\big]\le 2\exp\left(-\frac{2t^2}{C^2 k}\right)\;.
$$
\end{lemma}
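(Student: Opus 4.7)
The plan is to use the standard martingale method, namely to construct the Doob martingale associated with~$f$ and apply Azuma--Hoeffding's bounded differences inequality. Concretely, I would set $\cF_i=\sigma(\omega_1,\ldots,\omega_i)$ for $i=0,1,\ldots,k$ (with $\cF_0$ the trivial $\sigma$-algebra) and define $Z_i=\Exp[f\mid\cF_i]$, so that $Z_0=\Exp[f]$ is a constant and $Z_k=f$. Since $(\cF_i)$ is a filtration, the sequence $(Z_i)_{i=0}^{k}$ is automatically a martingale.

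The first key step is to control the range of the martingale differences $D_i=Z_i-Z_{i-1}$ conditional on $\cF_{i-1}$. Because $\Omega$ has product structure, one can write
\[
Z_i(\omega_1,\ldots,\omega_i)=\int f(\omega_1,\ldots,\omega_i,\omega_{i+1},\ldots,\omega_k)\,d\mu_{i+1}(\omega_{i+1})\cdots d\mu_k(\omega_k).
\]
For any two values $\omega_i,\omega_i'\in\Omega_i$ the $C$-Lipschitz property applied pointwise under the integral yields
\[
|Z_i(\omega_1,\ldots,\omega_{i-1},\omega_i)-Z_i(\omega_1,\ldots,\omega_{i-1},\omega_i')|\le C.
\]
Hence, conditional on $\cF_{i-1}$, the random variable $Z_i$ takes values in an interval of length at most $C$, and therefore so does $D_i$.

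The second step is to apply the Azuma--Hoeffding inequality in its refined form: for a martingale $(Z_i)$ whose differences $D_i$ lie in an interval of length at most $c_i$ conditional on $\cF_{i-1}$, one has
\[
\Prob\bigl[|Z_k-Z_0|>t\bigr]\le 2\exp\Bigl(-\frac{2t^2}{\sum_{i=1}^k c_i^2}\Bigr).
\]
The textbook proof of this bound reduces, via iterated conditioning on $\cF_{i-1}$, to Hoeffding's lemma for bounded, mean-zero random variables, which gives $\Exp[e^{\lambda D_i}\mid\cF_{i-1}]\le\exp(\lambda^2 c_i^2/8)$; multiplying through and optimising over $\lambda>0$ produces the stated tail. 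Setting $c_i=C$ for every~$i$ gives $\sum c_i^2=kC^2$, and the announced estimate $\Prob[|f-\Exp[f]|>t]\le 2\exp(-2t^2/(C^2 k))$ follows.

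The main obstacle is the first step: translating the coordinatewise Lipschitz bound on $f$ into a conditional range bound on~$D_i$. This relies crucially on the product structure of~$\Omega$, since it is the independence of $\omega_{i+1},\ldots,\omega_k$ from $\omega_i$ that lets us absorb the change $\omega_i\mapsto\omega_i'$ under a common product measure and apply the Lipschitz bound inside the integral; without this independence, the argument breaks down. Once the range bound is established, the remainder is an entirely routine appeal to Hoeffding's lemma and the Doob martingale machinery.
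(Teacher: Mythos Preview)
Your proof is correct and is the standard Doob martingale argument underlying McDiarmid's inequality. The paper itself does not prove this lemma; it simply states it and cites McDiarmid's original paper~\cite{McDiarmid:BoundedDiff}, so there is no in-paper proof to compare against.
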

We shall also need Talagrand's Inequality, in a version as in~\cite[Theorem~2]{McDRee:Talagrand}.\footnote{All the applications of McDiarmid's Inequality below could actually be replaced by Talagrand's Inequality. However the former has  assumptions that are easier to check and a conclusion that is cleaner.} For a function $f:\Omega\rightarrow \RR$ in a probability space $\Omega=\prod_{i=1}^k \Omega_i$, we say that \emph{values $\omega_{i_1}\in\Omega_{i_1},\ldots,\omega_{i_p}\in\Omega_{i_p}$ certify\marginpar{certify} that $f\ge \Lambda$} if for each choice of $(\omega_j\in \Omega_j)_{j\in [k]\setminus\{i_1,\ldots,i_p\}}$ we have that $f(\omega_1,\ldots,\omega_k)\ge \Lambda$.
\begin{lemma}[Talagrand's Inequality]\label{lem:Talagrand}
Let $f:\Omega\rightarrow [0,+\infty)$ be a $C$-Lipschitz function defined on a product probability space $\Omega=\prod_{i=1}^k \Omega_i$. Suppose also that there exists a constant $c>0$ such that if we have $\Lambda>0$ and $\omega_1\in\Omega_1,\ldots,\omega_k\in\Omega_k$ such that $f(\omega_1,\ldots,\omega_k)\ge \Lambda$ then there is a set of at most $c\Lambda$ values that certify $f\ge \Lambda$. Then for each $t>0$ we have
$$\Prob\left[f\ge \Exp[f]+t\right]\le \exp\left(-\frac{t^2}{2cC^2 (\Exp[f]+t)}\right)\;.
$$
\end{lemma}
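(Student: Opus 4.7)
The statement is a direct transcription of \cite[Theorem~2]{McDRee:Talagrand}, so my plan is to prove it by citation. Should one wish to reconstruct the argument from scratch, the standard route is a three-step application of Talagrand's convex distance inequality, which I sketch below.

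First, invoke Talagrand's convex distance inequality: for any measurable $A\subseteq\Omega$ with $\Prob[A]>0$ one has $\Prob[A]\cdot\Exp[\exp(d_T(X,A)^2/4)]\le 1$, where $d_T(\cdot,A)$ denotes the Talagrand (weighted Hamming) distance to $A$. Next, apply this with $A=\{f\le m\}$ for a median $m$ of $f$: given any $\omega\in\Omega$ with $f(\omega)=\Lambda\ge m$, the certificate hypothesis supplies a set $I(\omega)\subseteq[k]$ of at most $c\Lambda$ coordinates such that every $\omega'$ agreeing with $\omega$ on $I(\omega)$ satisfies $f(\omega')\ge\Lambda$. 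In particular, any $\omega'\in A$ must differ from $\omega$ on at least one coordinate in $I(\omega)$. Choosing the weight vector supported on $I(\omega)$ in the definition of $d_T$, together with the $C$-Lipschitz property (each coordinate change shifts $f$ by at most $C$), gives
$$d_T(\omega,A)\;\ge\;\frac{f(\omega)-m}{C\sqrt{c\,f(\omega)}}\;.$$
Plugging this into the convex distance inequality produces the median-centered tail bound $\Prob[f\ge m+t]\le 2\exp\bigl(-t^2/(4cC^2(m+t))\bigr)$, and symmetrically for the lower tail.

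Finally, one passes from the median to the expectation: integrating the above tail yields $|\Exp[f]-m|=O(C\sqrt{c\,\Exp[f]})$, and re-inserting this estimate into the median bound while absorbing lower-order losses into the constants delivers the stated form $\Prob[f\ge\Exp[f]+t]\le\exp\bigl(-t^2/(2cC^2(\Exp[f]+t))\bigr)$. The only genuine obstacle is precisely this last step, namely matching the exact constant $2$ in the denominator of the exponent when translating from the median to the mean; the constant-tracking is fiddly but routine. Since this bookkeeping has already been executed by McDiarmid and Reed in \cite[Theorem~2]{McDRee:Talagrand}, the paper simply cites that reference.
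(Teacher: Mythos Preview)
Your proposal is correct and matches the paper's treatment exactly: the paper does not prove Lemma~\ref{lem:Talagrand} but simply quotes it from \cite[Theorem~2]{McDRee:Talagrand}. Your supplementary sketch of the convex-distance route is accurate and is indeed the standard derivation, but the paper itself contains no such argument.
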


Next, we introduce Suen's inequality (\cite{Suen:Concentration}, see also \cite[p.~128]{AS08}). Let $\{B_i\subset \Omega\}_{i\in I}$ be a finite collection of events in an arbitrary probability space $\Omega$. A \DEF{superdependency graph} for $\{B_i\}_{i\in I}$ is an arbitrary graph on the vertex set $I$ whose edges satisfy the following. Let $I_1,I_2\subset I$ be two arbitrary disjoint sets with no edge crossing from $I_1$ to $I_2$. Then any Boolean combination of the events $\{B_i\}_{i\in I_1}$ is independent of any Boolean combination of the events $\{B_i\}_{i\in I_2}$. In this setting (and only in this setting) we write \DEF{$i\sim j$} to denote that $ij$ forms an edge. 

Suen's Inequality allows us to approximate $\Prob[\bigwedge \overline{B_i}]$ by $\prod \Prob[\overline{B_i}]$.
\begin{lemma}[Suen's Inequality]\label{lem:Suen}
Using the above notation, and writing $M=\prod \Prob[\overline{B_i}]$, we have
$$\left|\Prob\Big[\bigwedge \overline{B_i}\Big]-M\right|\le M \cdot \left(
\exp\big(\sum_{i\sim j}\nu_{i,j}\big)-1\right) \;,$$
where $$\nu_{i,j}=\frac{\Prob[B_i\wedge B_j]+\Prob[B_i]\Prob[B_j]}{\prod_{\text{$\ell\sim i$ or $\ell\sim j$}}(1-\Prob[B_\ell])}\;.$$
\end{lemma}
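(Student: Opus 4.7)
The proof is classical and I would follow Janson's approach (as presented in Alon--Spencer~\cite{AS08}). Fix an arbitrary linear ordering $B_1,\dots,B_m$ of the events and telescope
\[
\Prob\Big[\bigwedge_{i=1}^m \overline{B_i}\Big] = \prod_{k=1}^m \Prob\Big[\overline{B_k} \,\Big|\, \bigwedge_{j<k}\overline{B_j}\Big].
\]
For each $k$, I would partition the prior indices as $C_k=\{j<k : j\sim k\}$ (close) and $F_k=\{j<k : j\not\sim k\}$ (far). By the defining property of the superdependency graph, $B_k$ together with any Boolean combination of $\{B_j\}_{j\in C_k}$ is independent of every Boolean combination of $\{B_j\}_{j\in F_k}$. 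Applying this independence both to the numerator and to the denominator of the conditional probability, the factor on the right-hand side reduces to an expression that only involves the joint law of $B_k$ and $\{B_j\}_{j\in C_k}$.

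Next I would estimate this reduced conditional probability via Bonferroni-type inequalities. The easy upper Bonferroni bound gives $\Prob[B_k\wedge\bigwedge_{j\in C_k}\overline{B_j}]\le \Prob[B_k]$, while a lower Bonferroni bound together with $\Prob[\bigwedge_{j\in C_k}\overline{B_j}]\ge\prod_{j\in C_k}(1-\Prob[B_j])$ yields a denominator we can control. Combining these and using $1-x\ge e^{-x/(1-x)}$, I expect an estimate of the form
\[
\Prob\Big[\overline{B_k}\,\Big|\,\bigwedge_{j<k}\overline{B_j}\Big] \ge \Prob[\overline{B_k}]\cdot \exp\!\Big(-\sum_{j\in C_k}\nu_{j,k}\Big),
\]
where the denominator of $\nu_{j,k}$ emerges precisely from the product of $(1-\Prob[B_\ell])$ that appears when normalising the conditioning on the ``barriers'' adjacent to $k$ or $j$. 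A symmetric argument gives the companion upper bound. Taking the product over $k$ and summing $\sum_k\sum_{j\in C_k}\nu_{j,k}=\sum_{i\sim j}\nu_{i,j}$ produces
\[
M\cdot\exp\!\Big(-\textstyle\sum_{i\sim j}\nu_{i,j}\Big)\;\le\;\Prob\Big[\bigwedge \overline{B_i}\Big]\;\le\;M\cdot\exp\!\Big(\textstyle\sum_{i\sim j}\nu_{i,j}\Big),
\]
from which the stated bound follows by $|e^x-1|,|1-e^{-x}|\le e^x-1$ for $x\ge 0$.

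The main obstacle will be producing the denominator $\prod_{\ell\sim i \text{ or } \ell\sim j}(1-\Prob[B_\ell])$ with \emph{exactly} this range of indices, rather than a slightly larger or smaller one. This requires careful bookkeeping: when we extract a correlated pair $(j,k)$ from the big conditioning we must verify that we only lose the $(1-\Prob[B_\ell])$ factors corresponding to events adjacent (in the superdependency graph) to either $j$ or $k$, and that no event $B_\ell$ is charged twice when we sum the per-$k$ estimates into the global sum $\sum_{i\sim j}\nu_{i,j}$. This combinatorial accounting — rather than any probabilistic subtlety — is what pins down the precise form of $\nu_{i,j}$ stated in the lemma.
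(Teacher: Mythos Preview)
The paper does not prove this lemma; it is quoted as a known result with references to Suen's original paper and to Alon--Spencer~\cite[p.~128]{AS08}, so there is no in-paper proof to compare against. Your sketch follows the standard Janson-style telescoping argument from the very source the paper cites, so it is consistent with what the authors have in mind; for the purposes of this paper no proof is expected here.
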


\section{Almost packings via the nibble method}\label{sec:ExistsAlmostPacking}

In this section, we prove the almost packing lemma
(Lemma~\ref{lem:ExistsAlmostPacking}). %We shall first give a brief overview of our proof.

\subsection{Outline of the proof of Lemma~\ref{lem:ExistsAlmostPacking}}

Given an $(n,\Delta)$-family of trees we want to find an almost packing
into $K_{(1+\eps)n}$. Our first step is to prepare the trees 
(see Section~\ref{sec:grouping}): We start by grouping all trees but the
exceptional tree~$T_0$ according to their sizes into $c=50/\eps$ many groups
so that trees in each group have almost the same number of vertices. The
reason behind this is that one of our goals is to get good bounds on the
quasirandomness of the host graph after each packing round of the nibble method, and for
obtaining these bounds we need a very fine-grained control over the
sizes of the forests embedded in one round. Since our trees can be very
different in size, however, we group them as described and show that
quasirandomness is maintained for each group individually (hence also in
total).
Unfortunately though, even the difference in tree sizes within one group
(which are at most $n/2c$) is
too big for the precision that we need for our quasirandomness
bounds. We resolve this issue by attaching a small path (of length at most $n/2c=\eps n/100)$
to each tree, we can guarantee that in each group~$i\in[c]$ all
trees~$T_{i,s}$ with $s\in[k_i]$ are actually of exactly the same
size. Observe that in total this adds at most $\eps n^2/50$ edges to our
tree family, hence the resulting family in total still has less edges
than $K_{(1+\eps)n}$.
Next, we use Lemma~\ref{lem:cuttree} to obtain a $\rho$-balanced
$r$-level partition of each tree~$T_{i,s}$ such that each level~$F^j_{i,s}$
with $j\in[r]$ forms a forest with constantly many components and all the
levels are of similar size. The resulting difference in forest sizes within
one group now is sufficiently small for the precision that we need for our
quasirandomness bounds.

Our second step (see Section~\ref{sec:treezero}) is to remove a copy
of~$T_0$ from~$K_{(1+\eps)n}$. The
resulting graph is still $\alpha_1$-quasirandom for arbitrarily
small~$\alpha_1$. Our third step is to almost pack the remaining trees
in~$r$ rounds. In round~$j$ we embed level~$F^j_{i,s}$ of tree~$T_{i,s}$
for all $i\in[c]$ and $s\in[k_i]$. That this is possible is guaranteed by the nibble lemma,
Lemma~\ref{lem:newnibble} (see Section~\ref{sec:nibble}). This lemma states
that in an $\alpha_j$-quasirandom graph~$G_j$ we can find partial
homomorphisms from our levels~$F^j_{i,s}$ to~$G_j$ such that these
homomorphisms produce an almost packing of~$F^j_{i,s}$. At the end of round~$j$ we
remove from~$G_j$ all edges used in images of any~$F^j_{i,s}$.
Lemma~\ref{lem:newnibble} also guarantees that the resulting
graph~$G_{j+1}$ is still quasirandom (albeit with worse parameters), hence
we can continue with the next round.

\subsection{Constants}
\label{sec:constants}

We now start the proof of Lemma~\ref{lem:ExistsAlmostPacking}.
Suppose that $\eps>0$ and $\Delta\in\mathbb N$ are given. 
Set $c=\frac{50}{\epsilon}$ and let\marginpar{$c,r,\beta_r$}
\begin{align}\label{eq:constants:r}
  r=\frac{1000\Delta^2}{\epsilon^{10\Delta}}\quad\mbox{ and }\quad
  \beta_r= \epsilon^2/100.
\end{align}
We recursively define 
\marginpar{$\alpha_j,\beta_j$}
$\alpha_r,\beta_{r-1},\alpha_{r-1},\dots,\beta_1,\alpha_1$ by setting  
%$\alpha_r= \beta_{r-1}\ge \alpha_{r-1}= \dots =\beta_1\ge \alpha_1$ by setting 
\begin{equation}\label{eq:constants:betaalpha}
\alpha_j=\alpha_{\subsc{L\ref{lem:newnibble}}}(\eps,\beta_j,c,\Delta)
\text{\quad and \quad}
\beta_{j-1}=\alpha_j\;,  
\end{equation}
using Lemma~\ref{lem:newnibble} below. Note that we have that
$\alpha_1\le \beta_1=\alpha_2\le \beta_2=\alpha_3 \le \dots = \alpha_r\le\beta_r$.

Finally, let\marginpar{$\rho, n_0$}
\begin{align}\label{eq:constants:rho}
\rho= \min\{\frac{1}{4r},\alpha_1\}\quad\mbox{ and }\quad
n_0= \max\{ \frac{8\Delta r}{ \rho\alpha_1}, n_{\subsc{L\ref{lem:densequasirandom}}}(\alpha_1), n_1, n_2,\ldots,n_r \}\;,
\end{align}
where $n_i=n_{\subsc{L\ref{lem:newnibble}}}(\eps, \beta_i, c, \Delta, \alpha_i, r)$.
\subsection{Preparing the trees} 
\label{sec:grouping}

Now that we have chosen $n_0$ as required by Lemma~\ref{lem:ExistsAlmostPacking}, 
consider an $(n,\Delta)$-tree family $\mathcal T$ 
and let $T_0\in \mathcal T$ be the exceptional tree of order at most $n/2$ (if it exists). 
In the following embedding procedure $T_0$ will be treated separately.

We group the other trees in $\mathcal T$ according to their order. 
For $i\in[c]$ let $T_{i,1},T_{i,2},\ldots,T_{i,k_i}$ be the trees of $\mathcal T$ whose order is in the interval $\left(\frac n2+(i-1)\cdot \frac{\epsilon n}{100},\frac n2+i\cdot \frac{\epsilon n}{100}\right]$.
We append to an arbitrary leaf of each tree $T_{i,s}$ a path with exactly 
$\frac n2+i\cdot \frac{\epsilon n}{100} -v(T_{i,s})$ edges. 
As a result, each modified tree  $T_{i,s}$ has order exactly $\frac n2+i\cdot \frac{\epsilon n}{100}$. 
Since $\mathcal T$ contains at most $2n$ trees, this added at most 
$\frac{\varepsilon n^2}{50}$ edges to the total number of edges in $\mathcal T$ and thus
\begin{equation}
\label{eq:totalEdgesAfterAdding}
\sum_{i\in [c], s\in [k_i]} e(T_{i,s}) \le \binom{n}{2} + \frac{\eps n^2}{50}.
\end{equation}
The order and the maximum degree of the trees are still upper-bounded by $n$ and $\Delta$, respectively. 
For $i\in [c]$ we now let 
\marginpar{$n_i$}
\begin{equation}
\label{eq:def:ni}
n_i=\frac{n}{2r} + i \frac{n}{2cr}=\frac{n}{2r} + i \frac{\eps n}{100r} = \frac{v(T_{i,s})}{r} .
\end{equation}

We slice the trees into~$r$ levels as follows. We pick an
arbitrary root \marginpar{$x_{i,s}$}$x_{i,s}$ for each tree $T_{i,s}$ with $i\in [c],\; s\in [k_i]$. For
all $i\in[c],\, s\in [k_i]$ we apply Lemma~\ref{lem:cuttree} to the rooted tree
$(T_{i,s},x_{i,s})$. Since 
\begin{equation*}
  v(T_{i,s})>\frac{n}2
    \ge\frac{n_0}2
    \geByRef{eq:constants:rho} \frac{4\Delta r}\rho 
		\quad \text{ and } \quad
		\rho \leByRef{eq:constants:rho} \frac{1}{4r} ,
\end{equation*}
we obtain a $\rho$-balanced $r$-level partition
\marginpar{$\mathcal P_{i,s}=(L_{i,s}^1,\ldots,L_{i,s}^r)$}$\mathcal P_{i,s}=(L_{i,s}^1,\ldots,L_{i,s}^r)$ of $(T_{i,s},x_{i,s})$
such that every level of $\mathcal P_{i,s}$ has at most
$8\Delta/\rho$ components. 
Finally, we use these partitions to define rooted forests
$(F_{i,s}^j,X_{i,s}^j)$ with $i\in[c],\, s\in[k_i]$ and $j\in[r]$ as follows.
Let \marginpar{$F_{i,s}^j$}$F_{i,s}^j=T_{i,s}[L_{i,s}^j]$ be the level~$j$ of the partition~$\cP_{i,s}$
and let\marginpar{$n_{i,s}^j$}
\begin{equation}
\label{eq:def:nisj}
n_{i,s}^j = v(F_{i,s}^j) = |L_{i,s}^j| \By{\text{Def }\ref{def:blp}}{=} 
(1\pm \frac{\rho}{2}) \frac{v(T_{i,s})}{r} 
\By{(\ref{eq:def:ni})}{=} (1\pm \frac{\rho}{2}) n_i. 
\end{equation}
Using the fact that $\rho\le 1/(4r)$ by~\eqref{eq:constants:rho}, we obtain that
\begin{equation}\label{eq:proof:nij}
  %v(F^j_{i,s}) = (1\pm\rho)n_i
  %\qquad\text{and}\qquad
  \frac{n}{4r}\le n_{i,s}^j\le \frac{2n}{r}\,.
\end{equation}

%THIS CONCERNS "HONZA 60"; OLD TEXT IN THE COMMENTS
%Let the root set~$X_{i,s}^j$\marginpar{$X_{i,s}^j$} be obtained by
%considering~$F_{i,s}^j$ as a rooted subforest of the rooted tree
%$(T_{i,s},x_{i,s})$, that is,
%\begin{equation*}
%  X_{i,s}^j=\begin{cases}
%    \{x_{i,s}\} & j=1 \\
%    \neighbor_{T_{i,s}}\big(\bigcup_{j'=1}^{j-1}L_{i,s}^{j'}\big)
%      \cap L_{i,s}^j & j\in\{2,\dots,r\} \,.
%  \end{cases}
%\end{equation*}
Let the root set~$X_{i,s}^j$\marginpar{$X_{i,s}^j$} be obtained by
considering~$F_{i,s}^j$ as a rooted subforest of the rooted tree
$(T_{i,s},x_{i,s})$, that is, $X_{i,s}^1=\{x_{i,s}\}$, and for $j>1$, $X_{i,s}^j$ is composed of the vertices of every component of~$F_{i,s}^j$ that are the closest to $x_{i,s}$.
Lemma~\ref{lem:cuttree} guarantees that for every $i\in[c],\, s\in[k_i]$ and $j\in[r]$ we have
\begin{equation}\label{eq:NCB}
  |X_{i,s}^j|
  \le \frac{8\Delta}{\rho}
  \leByRef{eq:constants:rho} \alpha_1\frac{n}{r}
  \;.
\end{equation}
%We close this paragraph with some definitions that will turn out
%useful in the following.
%Let $i\in[c],\, s\in [k_i]$ and $j>1$ and let $T$ be a tree in $\component(F_{i,s}^j)$
%with root $x\in X_{i,s}^j$. 
%Recall that $T$ is a rooted subtree of $(T_{i,s},x_{i,s})$.
%We also call the parent~$y$ of~$x$ in $T_{i,s}$ the \DEF{parent} of~$T$
%and denote it by $\parent(C):=\parent(x):=y$. Observe that $\parent(C)$ and $C$
%are in different levels of the partition $\cP_i$. 
%We define the \DEF{parent level} $\parlev(C)$ of~$C$ as well as the parent
%level $\parlev(x)$ of~$x$ to be the level of $\parent(C)$.\footnote{D: Do we really use those definitions?}

% We define \emph{level of a parent of the component $y$}, denoted
% $\level(y)$ as the unique number $p\in[j-1]$ such that
% $\parent(y)\in L_i^{(p)}$. 

\subsection{Embedding \texorpdfstring{$T_0$}{T0}}
\label{sec:treezero}

Our embedding procedure now starts by embedding~$T_0$ arbitrarily into
$K_{(1+\epsilon)n}$. By Lemma~\ref{lem:densequasirandom} the resulting graph
\begin{equation}\label{eq:embedding:G1}
  \text{$G_1=(V,E_1)=K_{(1+\epsilon)n}-T_0$ is $\alpha_1$-quasirandom.}
  % with $|E_1|\ge\binom{(1+\eps)n}{2}-n$.
\end{equation}

\subsection{The nibble lemma}
\label{sec:nibble}

For almost packing the remaining trees we use a nibble method, that
is, we proceed in rounds and embed in each round one level of each tree.
The setting of Lemma~\ref{lem:newnibble}, which captures
one round of the nibble procedure, is as follows.

We have a quasirandom host graph $G=(V,E)$ and a family
$(F_{i,s},X_{i,s})_{s\in [k_i],\,i\in[c]}$ of rooted forests that we want
to pack into $G$, one sub-forest~$F_{i,s}$ for each tree~$T_{i,s}$ to be
packed. In addition, we are given for each $i\in[c], s\in [k_i]$ a
set~$U_{i,s}\subset V$ of \emph{forbidden vertices} for the embedding
of~$F_{i,s}$. The set~$U_{i,s}$ contains vertices of~$G$ that were used for
the embedding of vertices of~$T_{i,s}$ in earlier rounds.

It is the quasirandomness of~$G$ that will enable us to almost pack the
forests $F_{i,s}$. While doing so, however, we need to keep in mind that
there are future embedding rounds to come. Therefore we cannot embed the
forest just somehow, but we have to assert that certain invariants
are maintained. One of these invariants is clearly the
quasirandomness of the part of the host graph that remains after the
embedding (Property~\ref{cond:GStillQuasirandom}). In addition we need to
guarantee that the embedding of the different forests~$F_{i,s}$ is
distributed ``fairly'' over the vertices of~$G$. To this end we require
that the sets~$U_{i,s}$ are well spread over~$G$ and our goal is to
maintain this property for the next embedding round
(Property~\ref{cond:UStillSmallLoad}).  For this we need a concept which
measures whether the sets $U_{i,s}$ are distributed in a sufficiently
random-like manner over the vertex set~$V$.

\begin{definition}[load]\label{def:load}
Consider a graph $G=(V,E)$ with $m=|V|$ and two vertices $v,w\in V$, and 
let $\cW=(W_{s})_{s\in [k]}$ be a collection of subsets of $V$.
\marginpar{$\load(v,w,\cW)$}
\marginpar{$\mu(\cW)$}
\marginpar{$\sigma(\cW)$}
\begin{align*}
\load(v,w,\cW) &= \left| \left\{ s\in [k] \st W_{s} \cap \{v,w\} \neq \emptyset \right\} \right|, \\
\mu(\cW) &= \frac{1}{\binom{m}{2}}\sum_{\{v',w'\}\in \binom{V}{2}} \load(v',w',\cW),\\
\sigma(\cW) &= \sum_{\{v',w'\}\in \binom{V}{2}} \left(\load(v',w',\cW) - \mu(\cW)\right)^2.
\end{align*}
We say that $\cW$ is \DEF{$(\alpha,\ell)$-homogeneous} if $\sigma(\cW)\le \alpha \ell^4$, and for each $s,s'\in[k]$ we have $||W_{s}|-|W_{s'}||\le \alpha \ell$.
\end{definition}

In the proof of Lemma~\ref{lem:newnibble} we will maintain these
invariants by embedding the forests~$F_{i,s}$ randomly, that is, we construct
random partial homomorphisms $h_{i,s}\colon F_{i,s} \rightsquigarrow G$. 
The mappings $h_{i,s}$ do not embed the vertices in $X_{i,s}$, and there will be another family of sets, denoted by $Y_{i,s}$ and called the \DEF{skipped} vertices, that are left unembedded. Thus the 
$h_{i,s}\colon F_{i,s}-(X_{i,s}\cup Y_{i,s}) \to G$ are homomorphisms. 
However, they do not necessarily form a proper packing of 
$F_{i,s}-(X_{i,s}\cup Y_{i,s})_{s\in [k_i],\,i\in[c]}$ into~$G$,
because they may fail to be injective or pairwise edge-disjoint. 
In order to measure this shortcoming, we introduce various types of
\emph{collisions}, which we describe in the following definition.

\begin{definition}[colliding and skipped vertices]\label{def:collisions}
In the setting above, suppose that $h_{i,s}: F_{i,s}-(X_{i,s}\cup Y_{i,s})\rightarrow G$ are homomorphisms.
We say that a vertex $y\in V(F_{i,s})$
is in a \DEF{vertex collision} or that~$y$ is \DEF{colliding}, if there exists a vertex $z\in V(F_{i,s})\setminus
\{y\}$ such that $h_{i,s}(y)=h_{i,s}(z)$. We define\marginpar{$\VC_{i,s}$}
\[\VC_{i,s}=\{y\in V(F_{i,s})\::\: y\text{ is colliding}\}\;.\]
We say that an edge $xy\in E(F_{i,s})$ is \emph{colliding} if there is some
$(i',s')\neq (i,s)$ with $x'y'\in E(F_{i',s'})$ such that $h_{i,s}(x,y)=h_{i's'}(x',y')$. A vertex $y\in V(F_{i,s})$ is in an \DEF{edge collision} if there is $x\in V(F_{i,s})\setminus
\{y\}$ such that $xy$ is colliding.
We define\marginpar{$\EC_{i,s}$}
\[\EC_{i,s}=\{y\in  V(F_{i,s})  \::\: y\text{ is in an edge collision}\}\;.\]
We say a vertex $x\in \bigcup_{i,s}V(F_{i,s})$ is \DEF{faulty} if $x\in \bigcup_{i,s} (\VC_{i,s}\cup \EC_{i,s})$. 

For a vertex~$v\in V$ the \emph{vertices mapped to~$v$ with faulty
neighbours} are \marginpar{$\FN(v)$}
\begin{equation*}
  \FN(v)=
    \bigcup_{i,s}\big\{x\in h^{-1}_{i,s}(v)
    \colon\, \exists xy\in E(F_{i,s})\text{ such that $y$ is faulty}\big\}\;,
\end{equation*}
the \emph{vertices mapped to~$v$ with skipped neighbours} are\marginpar{$ \YN(v)$}
\begin{equation*}
  \YN(v)=
    \bigcup_{i,s}\big\{x\in h^{-1}_{i,s}(v)
    \colon\, \exists xy\in E(F_{i,s})\text{ such that $y\in Y_{i,s}$}\big\}\;,
    % \bigcup_{i,s}\big\{y\in Y_{i,s}\colon\, \exists x\in h^{-1}(v)
    % \cap \neighbor_{F_{i,s}}(y)\big\}\;,
\end{equation*}
and the \emph{vertices mapped to~$v$ with root neighbours}  are\marginpar{$\XN(v)$}
\begin{equation*}
  \XN(v)=
    \bigcup_{i,s}\big\{x\in h^{-1}_{i,s}(v)
    \colon\, \exists xy\in E(F_{i,s})\text{ such that $y\in X_{i,s}$}\big\}\;,
    % \bigcup_{i,s}\big\{x\in X_{i,s}\colon\, \exists x\in h^{-1}(v)
    % \cap \neighbor_{F_{i,s}}(y)\big\}\;.
\end{equation*}
\end{definition}

Lemma~\ref{lem:newnibble} now asserts that we only have a small number of these collisions. 
As we will show after stating the lemma, this implies that we get an almost embedding.

\begin{lemma}[Nibble Lemma]\label{lem:newnibble}
For every $\epsilon,\beta>0$, and $c, \Delta\in\mathbb N$,  
there exists $0<\alpha\le \beta$ so that for every integer $r$ there exists $n_0$ such that for every $n\ge n_0$ the following is true.

We assume that we are given a family of rooted forests
$\cF=(F_{i,s},X_{i,s})_{i\in [c],s\in [k_i]}$ with
$n/2\le\sum_{i=1}^{c}k_i\le 2n$, $|X_{i,s}| \le \alpha\frac{n}{r}$,
$n_{i,s}=v(F_{i,s}) = (1\pm\alpha)n_i$\marginpar{$n_{i,s}$}, where, as before, $n_i=\frac{n}{2r} + i
\frac{n}{2cr}$.
% as in Setting~\ref{set:forests} with parameter $\xi_{SETTING}=\alpha$.
Moreover, we assume that $G=(V,E)$ is an $\alpha$-quasirandom graph with 
$m=|V|=(1+\epsilon)n$
% I think we don't need this: $(1+\epsilon)n \le m=|V| \le 2n$ 
and  density $d>\epsilon$. For each $i\in [c]$, let
$\cU_i=(U_{i,s})_{s\in [k_i]}$ be an $(\alpha,n)$-homogeneous  family with $|U_{i,s}|<n$ for all $s\in[k_i]$. For all $i\in [c],\, s\in [k_i]$ set $V_{i,s}=V\setminus U_{i,s}$.

Then there are sets $Y_{i,s}\subseteq V(F_{i,s})$ and homomorphisms 
$h_{i,s}: F_{i,s} - (X_{i,s} \cup Y_{i,s}) \to G[V_{i,s}]$ for all $i\in [c]$ and  $s\in [k_i]$, with the following properties. For each $i\in [c]$, each $s\in [k_i]$, and each $v\in V(G)$ we have 
\begin{enumerate}[label={\rm (C\arabic{*})}]
\item\label{cond:NumberSkippedVertices}
 $|Y_{i,s}| \le \beta n/r$, % \alpha'
\item\label{cond:VertexCollisions}
 $|\VC_{i,s}|\le 
{20n}/({\epsilon r^2d^\Delta})$,
\item\label{cond:EdgeCollisions} 
$|\EC_{i,s}|\le 300\Delta n/(\epsilon^2r^2d^{\Delta})$,
\item\label{cond:NCN} 
$|\FN(v)|\le {10^4\Delta^3n}/({\epsilon^3r^2d^{2\Delta}})$,
\item\label{cond:NumberSkippedNeighbours}
$|\YN(v)|\le 
\beta n/r$, % \alpha'
\item \label{cond:rootneighbours}
$|\XN(v)|\le \beta n/r$,
\item\label{cond:GStillQuasirandom}
the graph $\tilde G= (V,E\setminus \bigcup_{i,s} E(h_{i,s}))$ is $\beta$-quasirandom, and
\item\label{cond:UStillSmallLoad}
for each $i\in [c]$, the family $\tilde\cU_i=(\tilde U_{i,s})_{s\in [k_i]}$ with $\tilde U_{i,s}=U_{i,s}\cup V(h_{i,s})$ is $(\beta,n)$-homogeneous.
\end{enumerate}
%
%\forall i,s \quad
%\#\left\{ y\in V(F_{i,s}) \colon  \right\}  \le  
\end{lemma}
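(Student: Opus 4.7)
My plan is to construct, independently for each $(i,s)$, a random \emph{limping homomorphism} $h_{i,s}$ and then verify (C1)--(C8) by first-moment bounds plus concentration. Since each forest $F_{i,s}$ is bipartite, I fix a proper $2$-colouring $V(F_{i,s})=A_{i,s}\dcup B_{i,s}$ and declare one class as $\primary_{i,s}$ and the other as $\secondary_{i,s}$. First, embed each non-root primary vertex $x$ independently and uniformly at random into $V_{i,s}=V\setminus U_{i,s}$. Then, for each non-root secondary vertex $y$, let $N^\star_y$ be the intersection of $V_{i,s}$ with the common $G$-neighbourhood of the images of the non-root primary forest-neighbours of $y$: if $|N^\star_y|\ge(1-\gamma)d^{\deg'(y)}|V_{i,s}|$ for a suitably chosen threshold $\gamma=\gamma(\beta,\Delta,d)$, embed $y$ uniformly in $N^\star_y$; otherwise add $y$ to $Y_{i,s}$.

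Using the $\alpha$-quasirandomness of $G$ and Lemma~\ref{lem:bad}, a random primary tuple is $\gamma$-bad with probability $O(\gamma)$, so $\Exp|Y_{i,s}|=O(\gamma) n_{i,s}$, and choosing $\gamma$ small enough gives (C1) in expectation. For (C2), two secondary vertices collide with probability $O(1/(d^\Delta m))$ per pair, which over $O(n_{i,s}^2)$ pairs produces the required bound; similar elementary birthday-type counts handle the expected versions of (C3)--(C6), using in each case that a secondary image is drawn uniformly from a set of size $\Omega(d^\Delta m)$. For (C7), by linearity $\Exp[e_{\tilde G}(B)]=e_G(B)-\sum_{i,s}\Exp|E(h_{i,s})\cap\binom{B}{2}|$, and each $F_{i,s}$-edge is mapped inside $B$ with probability $(|B|/m)^2$ up to lower-order error, which yields the target density $\tilde d=d-2\sum_{i,s}e(F_{i,s})/m^2$.

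The key structural observation for concentration is that two random choices in the limping construction interact only when the corresponding forest vertices lie at distance at most $2$ in $F_{i,s}$: thus the Lipschitz constants of all quantities in (C1)--(C6) are at most $O(\Delta^2)$ and each such quantity is a sum of indicators whose ``certificate'' in the sense of Talagrand has size proportional to the quantity itself. Talagrand's Inequality~(Lemma~\ref{lem:Talagrand}) therefore gives exponential concentration, and a union bound over $(i,s)$ and, where applicable, $v\in V(G)$ closes the argument. For (C7), I would apply McDiarmid's Inequality~(Lemma~\ref{lem:McDiarmid}) to $e_{\tilde G}(B)$, which depends on $O(n^2/r)$ independent random choices with Lipschitz constant $O(\Delta^2)$: for $n$ large in terms of $\beta,r,\Delta$, the single-$B$ deviation probability is below $2^{-m-1}$, so a naive union bound over all subsets $B\subseteq V$ delivers the quasirandomness of $\tilde G$.

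The main obstacle I expect is (C8), preservation of homogeneity. Writing $\tilde U_{i,s}=U_{i,s}\cup V(h_{i,s})$, I would decompose $\sigma(\tilde\cU_i)$ into the already-controlled contribution of $\cU_i$, a cross term, and a new-sets variance in $\load(v,w,(V(h_{i,s}))_s)$. The last of these requires controlling the covariance $\Prob[v,w\in V(h_{i,s})]-\Prob[v\in V(h_{i,s})]\Prob[w\in V(h_{i,s})]$, summed over $s$, for each pair $(v,w)$; this is where Suen's Inequality~(Lemma~\ref{lem:Suen}) enters, applied to the events that individual primary or secondary vertices of $F_{i,s}$ land on $v$ or $w$ (the associated superdependency graph has bounded degree by the distance-$2$ observation above). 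Concentration of $\sigma(\tilde\cU_i)$ around its expectation then follows from McDiarmid, with a delicate Lipschitz analysis to account for the fact that one embedding step may simultaneously shift the load at $\Delta$ different vertex-pairs. Once $\Exp\sigma(\tilde\cU_i)\le\tfrac{\beta}{2}n^4$ and concentration of $|\tilde U_{i,s}|$ (via a direct Chernoff bound) are established, $(\beta,n)$-homogeneity of $\tilde\cU_i$ follows.
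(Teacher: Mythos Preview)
Your overall architecture---limping homomorphisms, expectation plus concentration, Suen for dependencies---matches the paper. But there is a genuine gap in your treatment of (C7), and it is exactly the point where the input homogeneity hypothesis on $\cU_i$ is needed and you do not use it.

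You write that ``each $F_{i,s}$-edge is mapped inside $B$ with probability $(|B|/m)^2$ up to lower-order error''. This is false: the limping homomorphism for $F_{i,s}$ maps into $V_{i,s}=V\setminus U_{i,s}$, not into $V$, so the correct leading term is $(|B\cap V_{i,s}|/|V_{i,s}|)^2$, and both numerator and denominator vary with $(i,s)$. Equivalently, working edge-by-edge in the host graph, the probability that a fixed $G$-edge $uv$ is hit by \emph{some} forest edge depends on $\load(u,v,\cU_i)$: only those $s$ with $U_{i,s}\cap\{u,v\}=\emptyset$ can contribute. If the loads were far from constant over pairs $uv$, the graph $\tilde G$ would \emph{not} be quasirandom; this is precisely why the lemma assumes $\cU_i$ is $(\alpha,n)$-homogeneous. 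The paper's proof introduces the notion of a \emph{typical} edge $uv$ (one whose load in each group is within $\alpha^{1/4}n$ of the mean $\mu(\cU_i)$), shows via Suen's inequality that the survival probability of every typical edge equals a fixed quantity $\prod_i M_i$ up to a negligible error, and bounds the number of atypical edges by the variance condition $\sigma(\cU_i)\le\alpha n^4$. Your expectation computation for $e_{\tilde G}(B)$ bypasses all of this and cannot be repaired without bringing the load hypothesis into play.

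Two smaller points. First, you omit passing from $G[V_{i,s}]$ to an induced $(\gamma,\Delta)$-\emph{super}quasirandom subgraph $G_{i,s}$ (Lemma~\ref{lem:superquasirandom}); the paper does this so that \emph{every} vertex, not merely most, lies in few bad tuples, which is what makes the conditional bounds in Lemma~\ref{lem:probh(x)=v} uniform and clean. Second, the primary/secondary split is not an arbitrary $2$-colouring: the paper puts the vertices at \emph{odd} distance from $X_{i,s}$ into $\primary_{i,s}$, so that every root-neighbour is primary. This is what makes the bound on $|\XN(v)|$ in (C6) a straightforward Chernoff argument over independent uniform placements; with an arbitrary colouring some root-neighbours could be secondary and the argument would need adjustment.
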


\subsection{Applying the Nibble Lemma to obtain an almost-packing}
Let us first recall what we have achieved so far. In Section~\ref{sec:grouping} we obtained a 
family $\mathcal{F}^j= (F_{i,s}^j,X_{i,s}^j)$ of rooted forests for $j\in [r]$. We can assume that $\sum_i k_i\ge n/2$ (as otherwise, we might add dummy trees to be embedded).
In Section~\ref{sec:treezero} we embedded the tree $T_0$, deleted its edges and ended up 
with an $\alpha_1$-quasirandom graph $G_1=(V,E_1)$. 

Now we set $\mathcal{U}_i^1=(U_{i,s}^1)_{s\in [k_i]}$ where $U_{i,s}^1=\emptyset$ 
for all $i\in [c]$ and $s\in [k_i]$. 
We perform $r$ embedding rounds. For $j=1,\dots,r$, we do the following in round $j$.
We apply Lemma~\ref{lem:newnibble} with parameters 
$\epsilon$, $\beta_j$, $c$, $\Delta$, obtaining $\alpha_j$ and $n_0$.
%By the choice of constants in Section~\ref{sec:constants}, we have $\alpha_j=\beta_{j-1}$. 
We then feed to Lemma~\ref{lem:newnibble}
\begin{enumerate}[label={\rm (P\arabic{*})$_j$}, ref={\rm (P\arabic{*})}]
\item the family $\cF^j=(F^j_{i,s},X^j_{i,s})_{i\in [c],s\in [k_i]}$ of rooted forests,
\item\label{itm:induction:G} an $\alpha_j$-quasirandom graph~$G_j=(V,E_j)$ with $|V|=m=(1+\epsilon)n$ and $d_j\binom{m}{2}=|E_j|\ge \frac34\eps n^2$, which implies $d_j\ge\eps$,
\item\label{itm:induction:U} and for each $i\in [c]$ an $(\alpha_j,n)$-homogeneous family $\cU^j_i=(U^j_{i,s})_{s\in [k_i]}$.
\end{enumerate}
Let us now check that the conditions required by Lemma~\ref{lem:newnibble} are met. 
By~\eqref{eq:NCB} we have $|X^j_{i,s}|\le \alpha_1\frac{n}{r}\le \alpha_j\frac{n}{r}$, 
by~\eqref{eq:def:nisj} and the definition of $\rho$
we have $v(F_{i,s}^j) = (1\pm\alpha_j)n_i$. Hence the conditions of Lemma~\ref{lem:newnibble} are satisfied. So we obtain sets $Y^j_{i,s}\subset V(F^j_{i,s})$ and homomorphisms $h^j_{i,s}: F^j_{i,s} - (X^j_{i,s} \cup Y^j_{i,s}) \to G[V^j_{i,s}]$, where $V^j_{i,s}=V\setminus U^j_{i,s}$, with vertex collisions $\VC^j_{i,s}$, edge collisions $\EC^j_{i,s}$, 
faulty neighbours $\FN^j(v)$, skipped neighbours $\YN^j(v)$, and root neighbours $\XN^j(v)$ for every $v\in V$, such that \ref{cond:NumberSkippedVertices}--\ref{cond:UStillSmallLoad} are satisfied. 

We will next argue that we can apply Lemma~\ref{lem:newnibble} again in the next round. For this purpose 
let $G_{j+1}=(V,E_{j+1})=(V,E_j\setminus \bigcup_{i,s}E(h^{j}_{i,s}) )$. 
Since $\beta_j=\alpha_{j+1}$ by~\eqref{eq:constants:betaalpha}, Conclusion~\ref{cond:GStillQuasirandom} implies that $G_{j+1}$ is $\alpha_{j+1}$-quasirandom. 
%(also $G_1$ is $\alpha_1$-quasirandom by~\eqref{eq:embedding:G1}). 
Moreover, to check the density requirement in~\ref{itm:induction:G}$_{j+1}$, 
\begin{equation*}\begin{split}
  |E_{j+1}| &\ge e(G_1)-\!\!\! \sum_{\substack{i\in[c], s\in[k_i]\\ j\in[r]}}\!\!\! e(F^j_{i,s})
  \ge e(K_{(1+\epsilon)n})-e(T_0)-\!\!\!\sum_{i\in[c],s\in[k_i]}\!\!\!e(T_{i,s}) \\
  &\geByRef{eq:totalEdgesAfterAdding} \binom{(1+\epsilon)n}{2}-(n-1)-\frac{\eps n^2}{50}-\binom{n}{2} 
  \geByRef{eq:constants:rho} \frac34 \epsilon n^2
  \,.
\end{split}\end{equation*}
%where the last inequality uses that $n$ is sufficiently large.
Let $\cU^{j+1}_i=\tilde\cU^j_i$. By~\ref{cond:UStillSmallLoad} the family $\cU^{j+1}_i$ is $(\beta_j=\alpha_{j+1},n)$-homogeneous.
We conclude that conditions~\ref{itm:induction:G}$_{j+1}$ and~\ref{itm:induction:U}$_{j+1}$ are again satisfied and hence we can apply Lemma~\ref{lem:newnibble} in the next round.

After finishing all~$r$ embedding rounds we define the set $R_{i,s}$ that contains all roots, skipped vertices and vertices in vertex or edge collisions in the tree~$T_{i,s}$,
\begin{equation*}
  R_{i,s}= \bigcup_{j\in[r]}\big(X^j_{i,s} \cup Y^j_{i,s} \cup\VC^j_{i,s}\cup\EC^j_{i,s}\big)
  \,.
\end{equation*}
Let $\tilde h^j_{i,s}$ be the restriction of $h^j_{i,s}$ to $V(F^j_{i,s})\setminus R_{i,s}$ and $\tilde h_{i,s}=\bigcup_{j\in[r]}\tilde h^j_{i,s}$. We will show that $\{\tilde h_{i,s},R_{i,s}\}_{i\in[c],s\in[k_i]}$ is an $(\eps n)$-almost packing of $\cT$ into $K_{(1+\epsilon)n}-T_0$, which will finish the proof of Lemma~\ref{lem:ExistsAlmostPacking}.
  
Indeed, by the definition of the sets $V^j_{i,s}$, the vertex-images of two homomorphisms~$h^j_{i,s}$ and $h^{j'}_{i,s}$ are disjoint, unless $j=j'$. In other words, vertices of different rounds cannot collide. Moreover, by the definition of 
%%% COMMENT5BYHONZA AND ANUSCH
$G_j$, 
the edges of $K_{(1+\epsilon)n}$ used for the embedding in some round do not get used again in a later round. Hence edges of different rounds can also not collide. Since~$h^j_{i,s}$ is a homomorphism from $F^j_{i,s}-(X^j_{i,s}\cup Y^j_{i,s})$ to $G[V^j_{i,s}]$, the set $\VC^j_{i,s}\cup\EC^j_{i,s}$ contains all vertices in vertex and edge collisions of $F^j_{i,s}$, and $X^j_{i,s}$ contains all roots of trees in $F^j_{i,s}$, we conclude that $\{\tilde h_{i,s}\}_{i\in[c],s\in[k_i]}$ is a packing of the family $\{T_{i,s}- R_{i,s}\}_{i\in[c],s\in[k_i]}$ into $K_{(1+\epsilon)n} -T_0$. 

Hence it remains to check conditions~\ref{def:almostpack:R} and~\ref{def:almostpack:RN} of Definition~\ref{def:almostpack}. 
For condition~\ref{def:almostpack:R}, observe that by \eqref{eq:NCB}, 
\ref{cond:NumberSkippedVertices}, \ref{cond:VertexCollisions} and \ref{cond:EdgeCollisions} of Lemma~\ref{lem:newnibble} we have
\begin{equation*}\begin{split}
  |R_{i,s}|&=\sum_{j\in[r]} \big(
    |X^j_{i,s}|+|Y^j_{i,s}|+|\VC^j_{i,s}|+|\EC^j_{i,s}|
  \big) \\
  &\le r \cdot \Big(
    \alpha_1\frac{n}{r} + \beta_r\frac{n}{r}
    + 20\frac{n}{\epsilon r^2 d_r^\Delta} 
    + \frac{300\Delta n}{\epsilon^3r^2d_r^{2\Delta}}
  \Big)\\
  &
  \le \Big(2\beta_r+\frac{320\Delta}{\eps^3rd_r^{2\Delta}}\Big)n
  \le\epsilon n \,,
\end{split}\end{equation*}
where we use $d_r\ge\eps$, and~\eqref{eq:constants:r}. 
For condition~\ref{def:almostpack:RN}, 
let $v\in V(K_{(1+\eps)n})$ be fixed and define
\begin{equation*}
    \RN(v) =
    \bigcup_{i,s}\big\{y\in h^{-1}_{i,s}(v)\colon\, 
    \exists xy\in E(T_{i,s})\text{ such that $x\in R_{i,s}$}\big\}\;.
\end{equation*}
We need to show that $|\RN(v)|\le\eps n$.
The definition of $R_{i,s}$ implies that 
$\RN(v)=\bigcup_j(\FN^j(v)\cup\YN^j(v)\cup\XN^j(v)\big)$
%%The definition of $R_{i,s}$ implies 
%%$\RN(v)=\FN(v)\cup\YN(v)\cup\XN(v)$ 
and thus we infer from
\ref{cond:NCN}, \ref{cond:NumberSkippedNeighbours}, \ref{cond:rootneighbours}
of Lemma~\ref{lem:newnibble} that
\begin{equation*}
  |\RN(v)|\le 
  \Big(\frac{10\Delta^3}{\epsilon^3r^2d_r^{2\Delta}}
    + \frac{\beta_r}{r}
    + \frac{\beta_r}{r}
  \Big)rn
  \le
  \Big(\frac{10\Delta^3\eps^{10\Delta}}{\epsilon^3\cdot 1000\Delta\cdot \epsilon^{2\Delta}}
    + \frac{2\eps^2}{100}
  \Big)n
  \le\eps n\,,
\end{equation*}
%%
%%\begin{equation*}
  %%|\RN(v)|\le 
  %%\Big(\frac{10\Delta^3}{\epsilon^3r^2d_r^{2\Delta}}
    %%+ \frac{\beta_r}{r}
    %%+ \frac{\beta_r}{r}
  %%\Big)n
  %%\le
  %%\Big(\frac{10\Delta^3\eps^{20\Delta}}{\epsilon^3(1000\Delta)^2\epsilon^{2\Delta}}
    %%+ \frac{2\eps^2}{r}
  %%\Big)n
  %%\le\eps n\,,
%%\end{equation*}
where again we use $d_r\ge\eps$, and~\eqref{eq:constants:r}.

\section{Proof of the Correction Lemma}\label{sec:almostpack}

In this section, we give a proof of Lemma~\ref{lem:almostpack}.
We consider the graph $K_m$ as a subgraph of $K_{(1+\epsilon) m}$, and set
$W=V(K_{(1+\epsilon) m})\setminus V(K_m)$. We are given trees
$T_1,\dots,T_k$ together with an $(\frac{\epsilon^2 m}{64\Delta})$-almost packing
$(h_i\colon T_i-R_i\rightarrow K_m)_{i\in[k]}$ of these trees into~$K_m$.

In each tree~$T_i$ we choose a root in $V(T_i)\setminus R_i$ and a 
breadth-first search ordering of the vertices of~$T_i$ starting at this root. We
enumerate the vertices $R_i=\{x_{i,1},\dots,x_{i,\ell_i}\}$ according to
this ordering.
Our approach now is to proceed tree by tree, starting with $T_1$, and to
embed the vertices of $R_i$ one by one into $W$ (in this order), so that we obtain a packing
of all trees into $K_{(1+\eps)m}$ in the end. More precisely, for
$i=1,\dots,k$ and $t=1,\dots,\ell_i$ we map the vertex $x_{i,t}$ to a
vertex~$\tilde h_i(x_{i,t})\in W$ using a greedy algorithm, where $\tilde
h_i(x_{i,t})$ must avoid certain forbidden sets, which we now define.

Firstly, $x_{i,t}$ should not be embedded on vertices in~$W$
which are already images of other vertices of~$T_i$, that is, vertices in
\begin{equation*}
  X_{i,t}=\bigcup_{s<t} \big\{\tilde h_i(x_{i,s})\big\} \,.
\end{equation*}
This will guarantee that $\tilde h_i$ is injective.
%%
%%Secondly, $x_{i,s}$ should not be embedded on a vertex in~$W$ whose edges
%%to $h_i$-images of $T_i$-neighbours of $x_{i,t}$ have been used already by
%%a tree $T_j$ with $j<i$, that is, vertices in the following set
%%$Y_{i,t}$. Let
%%\begin{equation*}
  %%A_{i,t}= \neighbor_{T_i}(x_{i,t}) \cap \big( V(T_i)\setminus R_i\big)
%%\end{equation*}
%%be the neighbours of $x_{i,t}$ that have already been embedded by $h_i$ and set
%%\begin{equation*}
  %%Y_{i,t}=\{w\in W\colon \exists j<i,y \in A_{i,t}\colon h_i(y)=v \text{
    %%and } vw\in E(h_j\cup \tilde h_j)\} \,.
%%\end{equation*}
%%Embedding~$x_{i,t}$ outside~$Y_{i,t}$ will guarantee that each edge running
%%between~$K_m$ and~$W$ is used by at most one tree.  
%%
Secondly, $x_{i,s}$ should not be embedded on a vertex in~$W$ whose edges
to $h_i$-images of $T_i$-neighbours of $x_{i,t}$ have been used already by
a tree $T_j$ with $j<i$. These forbidden vertex sets are captured below by the sets $Y_{i,t}$ (for $T_i$-neighbours of $x_{i,t}$ that are not in $R_i$) and $U_{i,t}$ (for $T_i$-neighbours of $x_{i,t}$ that are in $R_i$). Let
$A_{i,t}= \neighbor_{T_i}(x_{i,t}) \cap \big( V(T_i)\setminus R_i\big)$
be the neighbours of $x_{i,t}$ that have already been embedded by $h_i$ and set
\begin{equation*}
  Y_{i,t}=\{w\in W\colon \exists j<i,y \in A_{i,t}\colon h_i(y)=v \text{
    and } vw\in E(h_j)\cup E(\tilde h_j)\} \,.
\end{equation*}

Thirdly, we do not want
to embed $x_{i,t}$ to vertices contained in ``dangerously'' many used
edges, that is, vertices in the following set~$Z_i$. Let $E_{i,t}$ be the
set of edges in~$\binom{W}{2}$ that have already been used, that is
$E_{i,t}=\bigcup_{j< i} E(\tilde h_j)\cup E\big((\tilde h_i)\upharpoonright_{\{x_{i,1},\dots,x_{i,t-1}\}}\big)$
and set
\begin{equation*}
  Z_{i,t}=\{w\in W\colon w \text{ is contained in at least $\eps m/2$
    edges of } E_{i,t}\} \,.
\end{equation*}
Embedding~$x_{i,t}$ outside~$Z_{i,t}$ will guarantee that the embedding
process can be continued for the $R_i$-neighbours of~$x_{i,t}$.

Finally, let~$x$ be the parent of~$x_{i,t}$
in~$T_i$. If $x\in R_i$ then we have $x=x_{i,s}$ for some $s<t$. We let
\begin{equation*}
  U_{i,t}= \big\{ w\in W\colon \{\tilde h_i(x_{i,s}),w\} \in E_{i,t} \big\}
  = \big\{ w\in W\colon \{\tilde h_i(x_{i,s}),w\} \in E_{i,s} \big\}
  \,,
\end{equation*}
that is, the set of vertices in~$W$ whose edge to the image of $x_{i,s}$
has been used already. The equality holds because after $x_{i,s}$ and
before $x_{i,t}$ we only embed vertices~$x_{i,s'}$ of~$T_i$ and guarantee
that $\tilde h_i(x_{i,s'})\neq \tilde h_i(x_{i,s})$.
If $x\not\in R_i$ we let $U_{i,t}=\emptyset$.

Having defined these forbidden sets we now map $x_{i,t}$ to an arbitrary vertex
\begin{equation*}
  \tilde h_i(x_{i,t}) \in W \setminus \big( X_{i,t}\cup Y_{i,t}\cup Z_{i,t}
  \cup U_{i,t}\big)\,.
\end{equation*}
We claim that this set is not empty. Indeed, we have
$|X_{i,t}|\le|R_i|\le\eps^2 m/(64\Delta^2)$. In addition, in the definition of
$Y_{i,t}$ there are at most~$\Delta$ choices for~$y$ and hence for~$v$. For
a fixed~$v$, Definition~\ref{def:almostpack}\ref{def:almostpack:RN} states
that at most $\eps^2 m/(64\Delta^2)$ vertices~$z$ have been mapped by
$\bigcup_{j\le i} h_j$ to~$v$. Each of these vertices~$z\in V(T_j)$ has at
most~$\Delta$ neighbours mapped by $\tilde h_j$ to some $w\in W$.  Hence
$|Y_{i,t}|\le\Delta\cdot\Delta\cdot\eps^2 m/(64\Delta^2)$.
To get a bound on $|Z_{i,t}|$ we observe that
\begin{equation*}
  |E_{i,t}|\le\sum_{j\le i} e\big(T_j[R_j]\big)
  \le \sum_{j\le i} |R_j|
  \le k\frac{\eps^2 m}{64\Delta^2}\,.
\end{equation*}
Hence, since $k\le 2m$ we obtain
\begin{equation*}
  |Z_{i,t}|\le \frac{2|E_{i,t}|}{\eps m/2}
  \le \frac{4k\eps^2 m}{64\Delta^2 \eps m}
  \le\frac{\eps m}{8}\,.
\end{equation*}
Moreover, $|U_{i,t}|\le\eps m/2$ because $\tilde h_i(x_{i,s})\not\in
Z_{i,s}$. We conclude that
\begin{equation*}
  \big|W \setminus \big( X_{i,t}\cup Y_{i,t}\cup Z_{i,t}
  \cup U_{i,t}\big)\big|
  \ge \eps m-\frac{\eps^2 m}{64\Delta^2}-\Delta^2\frac{\eps^2 m}{64\Delta^2}
  -\frac{\eps m}{8} - \frac{\eps m}{2} 
  >0 \,.
\end{equation*}

It remains to check that, at the end of this procedure, the mappings
$(h_i\cup\tilde h_i)_{i\in[k]}$ form a packing of~$\mathcal{T}$
into~$K_{(1+\eps)m}$. Firstly, each $h_i\cup\tilde h_i$ is injective,
because $h_i$ is injective, $\tilde h_i$ is injective by the definition of~$X_{i,t}$, and $V(h_i)\cap V(\tilde h_i)=\emptyset$. Secondly,
$h_i\cup\tilde h_i$ is edge-preserving because we embed into a complete
graph. Thirdly, we have $E(h_i\cup\tilde h_i)\cap E(h_j\cup\tilde
h_j)=\emptyset$ for each $i>j$. Indeed, $E(h_i)$ and $E(h_j)$ are disjoint
by assumption. $E(h_i)$ and $E(\tilde h_j)$ (and similarly $E(\tilde h_i)$
and $E(h_j)$) are disjoint by the definition of~$Y_{i,t}$. Finally, 
$E(\tilde h_i)$ and $E(\tilde h_j)$ are disjoint by the definition of~$U_{i,t}$.

\section{Limping homomorphisms on quasirandom graphs}\label{sec:limpingWalk}

Let $F$ be a forest with maximum degree $\Delta$ and a given bipartition into \marginpar{primary~v.}\emph{primary vertices} and \marginpar{secondary v.}\emph{secondary vertices}. Let $G=(V,E)$ be an $(\alpha,\Delta)$-superquasirandom graph of density $d$. We now define a \marginpar{limping homomorphism }\emph{limping homomorphism $h$ from~$F$ to~$G$}. This is a random partial homomorphism from $F$ to $G$ whose distribution is described by the following two-step procedure.
\begin{enumerate}[label=\arabic{*}., leftmargin=*]
\item For each primary vertex $x\in V(F)$ we choose uniformly at random
  (u.a.r.)  a vertex $h(x)\in V$.
\item For each secondary vertex $y\in V(F)$ we choose u.a.r.~a real number
  $\tau(y)\in [0,1)$.  To pick $h(y)$, consider the set
  $\{u_1,\dots,u_p\}= h(\neighbor_{F}(y))$.\footnote{Note that $p$ can be strictly smaller than $\deg_F(y)$; this happens when $h$ is not injective on $\neighbor_{F}(y)$.}
	\begin{enumerate}[label=\abc]
	\item If $\{u_1,\dots,u_p\}$ is $\alpha$-bad then $h$ does not map~$y$ anywhere.
      We say that~$h$ \DEF{skips} $y$.
    \item If $y$ is not skipped, let $i=\lfloor\tau(y)\cdot
      \codeg(u_1,\dots,u_p)\rfloor+1$ and define $h(y)$ to be the $i$-th
      vertex in $\neighbor(u_1,\dots,u_p)$ (for which an order was fixed
      prior to the experiment). In other words, we choose $h(y)$ u.a.r. in
      $\neighbor(u_1,\dots,u_p)$. 

      Modelling this uniform random choice by
      $\tau(y)$ will help in the analysis.
	\end{enumerate}
\end{enumerate}
Hence, if we denote the set of primary vertices by~$P$ and the set of
secondary vertices by~$S$, the limping homomorphism is determined by an
element of the probability space 
\begin{equation}\label{eq:productspace}
  \Omega_F=V^P\times[0,1]^S\;.
\end{equation}
This is the product space that we shall use in applications of McDiarmid's Inequality later.

Observe that a limping homomorphism implicitly depends on the parameter
$\alpha$. This parameter will always be clear from the context.

The next three lemmas establish some fundamental properties of limping homomorphisms.

\begin{lemma}\label{lem:probh(x)=v} Suppose that we are given
  $\alpha\in(0,\frac14)$, a tree $F$ of maximum degree at most $\Delta$
  with a bipartition into primary and secondary vertices, and an
  $(\alpha,\Delta)$-superquasirandom graph $G=(V,E)$ of density $d$ and
  with $|V|\ge 4\Delta/d$.

  Let $h$ be the limping homomorphism from $F$ to $G$.  Let $uv\in E$ be an
  arbitrary edge of~$G$, let $x\in V(F)$ be an arbitrary primary vertex,
  let $y\in V(F)$ be an arbitrary secondary vertex and let $\mathcal H$ be
  an arbitrary event describing the placements of all vertices except
  $y$. Then the following statements hold.
  \begin{enumerate}[label=\abc]
  \item \label{it:prob-prim}$\Prob[h(x)=v]=\frac1{|V|}$.
  \item {\label{it:prob-skippedConditioned}}$\Prob[y\text{ is skipped}\:|\: h(x)=v]\le \alpha$.
  \item \label{it:prob-skipped}$\Prob[\text{$y$ is skipped}\,]\le \alpha$.
  \item \label{it:probedge}Suppose that $xy\in E(F)$. Then $\Prob[h(x)=u
    \text{ and } h(y)=v]
    =\frac{\big(1\pm\alpha\left(\frac{2}{d}\right)^\Delta\big)^{\Delta+2}}{d|V|^2}$.
  \item \label{it:prob-sec}$\Prob[h(y)=v]
    =\frac{\big(1\pm\alpha\left(\frac{2}{d}\right)^\Delta\big)^{\Delta+3}}{|V|}$.
  \item \label{it:prob-secnnonskipped}$\Prob[h(y)=v\:|\:\text{$y$ not
      skipped}\,]=\frac{\big(1\pm\alpha\left(\frac{2}{d}\right)^\Delta\big)^{\Delta+5}}{|V|}$.
  \item \label{it:prob-history}$\Prob[h(y)=v\:|\:\mathcal H]\le
    \frac{2}{d^\Delta |V|}$.
  \end{enumerate}
\end{lemma}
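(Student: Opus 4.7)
The plan is to handle the seven items in the order (a), (g), (c), (b), (d), (e), (f), since the later items depend on the earlier ones.

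Items (a) and (g) are essentially immediate. For (a), primary vertices are placed independently and uniformly, giving $\Prob[h(x)=v]=1/|V|$. For (g), conditional on the event $\mathcal H$ fixing the placements of every vertex except $y$, the value $h(y)$ is a deterministic function of $\tau(y)$ (possibly undefined). If $\{u_1,\dots,u_p\}=h(\neighbor_F(y))$ is $\alpha$-bad then $y$ is skipped, so the conditional probability is zero; otherwise $h(y)$ is uniform in $\neighbor(u_1,\dots,u_p)$, whose size is $(1\pm\alpha)d^p|V|\ge\tfrac12 d^\Delta|V|$ by superquasirandomness (using $\alpha<\tfrac14$, $p\le\Delta$ and $d\le 1$). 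Items (c) and (b) follow by a short counting argument: since the neighbours of the secondary vertex~$y$ are primary and thus independently placed uniformly, the random set $h(\neighbor_F(y))$ is, conditional on having size $p$, uniform over all $p$-subsets of $V$; the superquasirandom bound $\badness_{\alpha,p}(v)\le\alpha\binom{|V|}{p-1}$ then gives that the global fraction of $\alpha$-bad $p$-sets is essentially $\alpha$, proving (c). For (b) we additionally condition on $h(x)=v$: if $x\in\neighbor_F(y)$ we use the per-vertex superquasirandom bound applied to the fixed vertex~$v$, and otherwise the conditioning is independent of $h(\neighbor_F(y))$ and (b) reduces to (c).

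The central item is (d). Fix an edge $uv\in E$ and write $\neighbor_F(y)=\{x,x_2,\dots,x_k\}$ where $k\le\Delta$ and all listed vertices are primary. Conditioning on $h(x)=u$ and on the positions $h(x_i)=w_i$ of the remaining neighbours of $y$, one obtains
\[
\Prob[h(x)=u,\,h(y)=v]=\frac{1}{|V|^k}\sum_{(w_2,\dots,w_k)\in V^{k-1}}\frac{\mathbbm{1}\bigl[\{u,w_2,\dots,w_k\}\text{ is not $\alpha$-bad and }v\in\neighbor(u,w_2,\dots,w_k)\bigr]}{\codeg(u,w_2,\dots,w_k)}\,.
\]
I split the sum into \emph{good} tuples (those for which $\{u,w_2,\dots,w_k\}$ has size exactly $k$, is not $\alpha$-bad, and satisfies $w_i\in\neighbor(v)$ for each $i$) and the remainder. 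For good tuples the denominator equals $(1\pm\alpha)d^k|V|$ by superquasirandomness, and the number of good tuples is $|\neighbor(v)|^{k-1}$ adjusted by at most $k^2|V|^{k-2}$ tuples containing a coincidence and at most $\alpha|V|^{k-1}$ tuples giving an $\alpha$-bad set containing~$u$; this yields $(1\pm O(\alpha(2/d)^\Delta))(d|V|)^{k-1}$, with the factor $(2/d)^\Delta$ arising precisely from comparing $\alpha|V|^{k-1}$ to $(d|V|)^{k-1}$. The remainder contributes a lower-order amount since $\codeg\ge 1$ whenever $v\in\neighbor(\cdot)$, and after simplification all the errors aggregate into the claimed $(\Delta+2)$-fold multiplicative distortion.

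Items (e) and (f) are then short corollaries. For (e), sum the identity from (d) over all $u\in V$: only $u\in\neighbor(v)$ contributes (any other choice forces $h(y)\ne v$), and using $|\neighbor(v)|=(1\pm\alpha)d|V|$ (superquasirandomness at $v$ with $p=1$) introduces one further multiplicative factor of $1\pm\alpha(2/d)^\Delta$, accounting for the stated exponent $\Delta+3$. For (f), divide (e) by $\Prob[y\text{ not skipped}]\ge 1-\alpha$ from (c) and absorb $1/(1-\alpha)$ into two extra multiplicative factors of the same form. The main technical obstacle is the error book-keeping in (d): the ``remainder'' tuples (those with a coincidence or with an $\alpha$-bad image) must be shown not to swamp the main term, which requires the careful comparison between the superquasirandom bound $\alpha|V|^{k-1}$ at $u$ and the dominant order $(d|V|)^{k-1}$, and this is exactly the comparison that produces the telltale factor $(2/d)^\Delta$ appearing in the statement.
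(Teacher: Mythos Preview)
Your treatment of (a), (b), (c), (e), (f), (g) matches the paper's (you swap the order of (b) and (c), deriving (c) from a global count of $\alpha$-bad $p$-sets and then deducing (b); the paper derives (c) from (b), but either direction works).

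For (d) you take a genuinely different route. The paper does \emph{not} write the probability as a sum over placements $(w_2,\dots,w_k)$. Instead it introduces the event $\mathcal E_q=\{|h(\neighbor_F(y))|=q+1\}$ and, with $A=\{h(x)=u\}$, $B=\{h(y)=v\}$, $C=\{y\text{ not skipped}\}$, $D=\{v\in\neighbor\bigl(h(\neighbor_F(y)\setminus\{x\})\bigr)\}$, uses the chain rule
\[
\Prob[A\cap B\mid\mathcal E_q]=\Prob[A\mid\mathcal E_q]\cdot\Prob[D\mid\mathcal E_q]\cdot\Prob[C\mid A,D,\mathcal E_q]\cdot\Prob[B\mid A,C,D,\mathcal E_q]\,.
\]
The point is that $\Prob[D\mid\mathcal E_q]\approx d^q$ and $\Prob[B\mid A,C,D,\mathcal E_q]\approx 1/(d^{q+1}|V|)$, so their product is $\approx 1/(d|V|)$ \emph{for every} $q$. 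The coincidence tuples (smaller $q$) therefore give the same main term as the non-coincidence tuples, and no separate ``remainder'' estimate is needed.

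Your direct-sum approach can in principle be carried through, but your remainder bound has a gap. The sentence ``the remainder contributes a lower-order amount since $\codeg\ge 1$'' is far too weak: with only $\codeg\ge 1$, the coincidence tuples in $\neighbor(v)^{k-1}$ contribute up to $k^2|\neighbor(v)|^{k-2}$ to the inner sum, which gives a relative error of order $\Delta^2 2^{\Delta}d^{\Delta-1}$ --- a constant independent of $\alpha$, certainly not lower-order. Even replacing $\codeg\ge 1$ by the correct $\codeg\ge(1-\alpha)d^{p}|V|$ for a non-bad $p$-set, both this remainder and the $k^2|V|^{k-2}$ adjustment you subtract from the good count produce a relative error of order $\mathrm{poly}(\Delta)/(d^{\Delta}|V|)$, which under the sole hypothesis $|V|\ge 4\Delta/d$ is \emph{not} bounded by $\alpha(2/d)^\Delta$ when $\alpha$ is very small. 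The paper's $\mathcal E_q$-conditioning sidesteps this: rather than treating coincidences as an error to be bounded, it shows they yield the same value of $\Prob[A\cap B\mid\mathcal E_q]$ via the $d^q/d^{q+1}$ cancellation. If you want to keep your explicit-sum organisation, you must stratify the sum by $p=|\{u,w_2,\dots,w_k\}|$ and exhibit this same cancellation for each $p$, rather than trying to bound the $p<k$ strata crudely.
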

\begin{proof}
\ref{it:prob-prim} This follows immediately from the definition of limping homomorphisms.

\smallskip

\noindent
\ref{it:prob-skippedConditioned} The statement is trivially true when $\neighbor_{F}(y)=\{x\}$. Indeed, then $y$ is never skipped. So, let us assume that $|\neighbor_{F}(y)\setminus\{x\}|\ge 1$.

Let us expose the placement of all the
primary vertices of $F$. Let $\{u_1, \ldots, u_p\}=
h(\neighbor_{F}(y))\setminus \{v\}$. Note that $p\ge 1$ almost surely. As~$G$ is
$(\alpha,\Delta)$-superquasirandom $\badness_{\alpha,p}(v) \le \alpha
\tbinom{|V|}{p-1}$ and so we have
%
%
%
%\noindent
%\ref{it:prob-skippedConditioned} Let us expose the placement of all the
%primary vertices of $F$. Let $\{u_1, \ldots, u_p\}=
%h(\neighbor_{F}(y))\setminus \{v\}$. As~$G$ is
%$(\alpha,\Delta)$-superquasirandom $\badness_{\alpha,p}(v) \le \alpha
%\tbinom{|V|}{p-1}$ and so we have
%
\[\Prob\left[y  \text{ is skipped}\:|\: h(x)=v\right]=\Prob\left[\text
{$\{u_1, \ldots,u_p,v\}$  is  $\alpha$-bad} \right]\le \alpha\,.\]

\smallskip

\noindent
\ref{it:prob-skipped} We have
$
  \Prob[y \text{ is skipped}]
  =\sum_{w\in V} \Prob[y \text{ is skipped}\:|\: h(x)=w] \cdot \Prob[h(x)=w]
  \le\alpha
$,
by~\ref{it:prob-prim} and~\ref{it:prob-skippedConditioned}.

\smallskip

\noindent
\ref{it:probedge} 
Let~$A$ be the event that~$x$ gets mapped to~$u$, let~$B$ be the event that~$y$ gets mapped to~$v$, let~$C$ be the event that~$y$ is not skipped, and let~$D$ be
the event that~$v$ is in the common neighbourhood of $h(\neighbor_{F}(y)\setminus
\{x\})$. Note that $B\subset C\cap D$. Indeed, the fact that $B\subseteq C$ is clear. If~$y$ is not skipped, it is mapped to the common neighbourhood of $h(\neighbor_{F}(y))$. So, for $B$ to occur, we need~$v$ to be in this common neighbourhood. But then~$v$ is in the common neighbourhood of $h(\neighbor_{F}(y)\setminus \{x\})$ as well. Hence $B\subseteq D$.

Let $\mathcal{E}_q$ be the event that $|h(\neighbor_{F}(y))|=q+1$. As~$D$ and~$A$ are 
independent even if we condition on~$\mathcal E_q$, we have
\begin{align}
\nonumber
\Prob\left[A\cap B|\mathcal E_q\right]&=\Prob\left[A\cap B\cap C\cap D|\mathcal E_q\right]\\
\nonumber
&=\Prob\left[A|\mathcal E_q\right]\cdot \Prob\left[D|A\cap \mathcal E_q\right]\cdot
\Prob\left[C|\mathcal E_q\cap D\cap A\right]\cdot \Prob\left[B|\mathcal E_q\cap C\cap D\cap A\right]\\
\label{eq:putme}
&=\Prob\left[A|\mathcal E_q\right]\cdot \Prob\left[D|\mathcal E_q\right]\cdot
\Prob\left[C|\mathcal E_q\cap D\cap A\right]\cdot \Prob\left[B|\mathcal E_q\cap C\cap D\cap A\right]\;.
\end{align}

%%\noindent
%%\ref{it:probedge} 
%%Let~$A$ be the event that~$x$ gets mapped to~$u$, let~$B$ be the event that~$y$ gets mapped to~$v$, let~$C$ be the event that~$y$ is not skipped, and let~$D$ be
%%the event that~$v$ is in the common neighbourhood of 
%%$h(\neighbor_{F}(y)\setminus\{x\})$. 
%%Let $\mathcal{E}_q$ be the event that $|h(\neighbor_{F}(y))|=q+1$. As~$D$ and~$A$ are 
%%independent even if we condition on~$\mathcal E_q$, we have
%%\begin{equation}\label{eq:putme}
%%\Prob\left[A\cap
%%B|\mathcal E_q\right]=\Prob\left[A|\mathcal E_q\right]\cdot \Prob\left[D|\mathcal E_q\right]\cdot
%%\Prob\left[C|\mathcal E_q\cap D\cap A\right]\cdot \Prob\left[B|\mathcal E_q\cap C\cap D\cap A\right]
%%\end{equation}

We have 
$\Prob\left[A|\mathcal E_q\right]=\Prob\left[A\right]=\frac{1}{|V|}$. 
As $\badness_{\alpha,1}(v)=0$, we get that $\deg(v)=(1\pm \alpha)d|V|$. Consequently, 
$\Prob\left[D|\mathcal E_q\right]=\big((1\pm \alpha)d\big)^q$.
The number of $\alpha$-bad $(q+1)$-sets that contain~$u$ and have the remaining
vertices inside $\neighbor(v)$ is at most $\alpha\binom{|V|}{q}$. As
$|\neighbor(v)|\ge(1-\alpha)d|V|$, the total number of $(q+1)$-sets that
contain~$u$ and have the remaining vertices inside $\neighbor(v)$ is at
least $\binom{(1-\alpha)d|V|}{q}$. We thus get
\[1\ge \Prob\left[C|\mathcal E_q\cap D\cap A\right]\ge
1-\tfrac{\alpha\binom{|V|}{q}}{\binom{(1-\alpha)d|V|}{q}}\ge
1-\alpha\left(\tfrac{2}{d}\right)^\Delta\;,\] where we use
$(1-\alpha)d|V|-q\ge\frac12d|V|$, which follows from $|V|\ge4\Delta/d$.
Finally, if~$y$ is not skipped, then the set $h(\neighbor_{F}(y))$ is not
$\alpha$-bad, implying that \[\Prob\left[B|\mathcal E_q\cap C\cap D\cap
  A\right]=((1\pm\alpha)d^{q+1}|V|)^{-1}\,.\]  Substituting the above estimates into~\eqref{eq:putme}, we get
\begin{align*}
\Prob\left[A\cap B|\mathcal E_q\right]&
=\Prob\left[A|\mathcal E_q\right]\cdot \Prob\left[D|\mathcal E_q\right]\cdot
\Prob\left[C|\mathcal E_q\cap D\cap A\right]\cdot \Prob\left[B|\mathcal E_q\cap C\cap D\cap A\right]\\
&=\frac{(1\pm \alpha)^qd^q\cdot (1\pm\alpha\left(\tfrac{2}{d}\right)^\Delta)}{(1\pm\alpha)d^{q+1}|V|^2}=
\frac{(1\pm\alpha)^\Delta(1\pm 2\alpha)(1\pm\alpha\left(\tfrac{2}{d}\right)^\Delta)}{d|V|^2}\\
&=\frac{\big(1\pm\alpha\left(\frac{2}{d}\right)^\Delta\big)^{\Delta+2}}{d|V|^2}\;.
\end{align*}
As this quantity does not depend on the choice of~$q$, we get the same answer if we condition on the event $\mathcal E_{q'}$, for any $q'\in [\Delta]$. This gives~\ref{it:probedge}.
\smallskip

\noindent
\ref{it:prob-sec} Fix an arbitrary neighbour $z$ of $y$. Since~$z$ is
primary, we have 
$$\Prob[h(y)=v]=\sum_{w\in V\,:\, vw\in E} \Prob[h(y)=v \text{ and } h(z)=w]\;.$$
The above sum has $(1\pm \alpha)d|V|$ summands. The statement then follows from~\ref{it:probedge}.

\smallskip

\noindent
\ref{it:prob-secnnonskipped} We have
\begin{align*}
\Prob[h(y)=v\;|\;y\text{ not skipped}]&=\frac{\Prob[h(y)=v \text{ and } y\text{ not skipped}]}{\Prob[y\text{ not skipped}]}=\frac{\Prob[h(y)=v]}{\Prob[y\text{ not skipped}]}\;.
\end{align*}
Hence we get the claimed bound from~\ref{it:prob-skipped} and~\ref{it:prob-sec}.

\smallskip

\noindent
\ref{it:prob-history} We can expose the entire embedding of $F-y$, and
condition on the event~$\mathcal H$. Now, either the image of the
neighbours of $y$ form an $\alpha$-bad tuple, or they do not. In the former
case, $y$ is skipped, and the event $h(y)=v$ does not occur. In the latter
case, $y$ is chosen uniformly at random inside a set of size at least
$d^\Delta |V|/2$.
\end{proof}

\begin{lemma}\label{lem:AxAy}
  Suppose that we are given $\alpha\in(0,\frac14)$, a forest $F$ of maximum
  degree at most $\Delta$ with a bipartition into primary and secondary
  vertices, and an $(\alpha,\Delta)$-superquasirandom graph $G=(V,E)$ of
  density $d$ and with $|V|\ge 4\Delta/d$.

  Let $h$ be the limping homomorphism from $F$ to $G$.  Let $x,y\in V(F)$
  be two distinct vertices, and $u,v\in V$ be not necessarily
  distinct. Then we have 
  \[
  \Prob[h(x)=u \text{ and }h(y)=v]<\Big(\frac{2}{d}\Big)^{4\Delta^2}\frac{1}{|V|^2}
  \,.
  \]
\end{lemma}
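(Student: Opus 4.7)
The plan is to reduce to Lemma~\ref{lem:probh(x)=v}\ref{it:prob-history} by splitting into cases according to whether $x$ and $y$ are primary or secondary. Recall that in a limping homomorphism the primary vertices are embedded independently and uniformly, and then each secondary vertex is embedded via an independent uniform $\tau$-value applied to an ordering of the common neighbourhood of its (already placed) neighbours.

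\textbf{Case A: Both $x$ and $y$ are primary.} Then $h(x)$ and $h(y)$ are chosen independently and uniformly in $V$, so $\Prob[h(x)=u,h(y)=v]=1/|V|^2$.

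\textbf{Case B: One is primary, one is secondary.} Say $x$ is primary and $y$ is secondary (the other sub-case is symmetric). Condition on the entire embedding of $F-y$, which includes the event $h(x)=u$. By Lemma~\ref{lem:probh(x)=v}\ref{it:prob-history} applied to $y$, for every such history $\mathcal{H}$ we have $\Prob[h(y)=v\mid\mathcal{H}]\le 2/(d^\Delta|V|)$. Summing over histories consistent with $h(x)=u$ and using Lemma~\ref{lem:probh(x)=v}\ref{it:prob-prim} yields
\[
\Prob[h(x)=u,h(y)=v]\le\Prob[h(x)=u]\cdot\frac{2}{d^\Delta|V|}=\frac{2}{d^\Delta|V|^2}.
\]

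\textbf{Case C: Both $x$ and $y$ are secondary.} This is the delicate case. Expose the placement of \emph{all} primary vertices of $F$; call this history $\mathcal{H}_0$. Once $\mathcal{H}_0$ is fixed, the set $h(\neighbor_F(x))$ and the set $h(\neighbor_F(y))$ are determined, and $h(x)$ is a deterministic function of $\tau(x)$ (and $\mathcal{H}_0$), while $h(y)$ is a deterministic function of $\tau(y)$ (and $\mathcal{H}_0$). Since $\tau(x)$ and $\tau(y)$ are independent of each other and of the primary placements, the events $\{h(x)=u\}$ and $\{h(y)=v\}$ are conditionally independent given $\mathcal{H}_0$. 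Applying Lemma~\ref{lem:probh(x)=v}\ref{it:prob-history} twice (to $x$ with history $\mathcal{H}_0$, and to $y$ with history $\mathcal{H}_0$) gives
\[
\Prob[h(x)=u,h(y)=v\mid\mathcal{H}_0]\le\Big(\frac{2}{d^\Delta|V|}\Big)^{\!2}=\frac{4}{d^{2\Delta}|V|^2},
\]
and averaging over $\mathcal{H}_0$ preserves this bound.

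\textbf{Combining.} In all three cases the probability is at most $4/(d^{2\Delta}|V|^2)=(2/d)^{2\Delta}\cdot 4\cdot d^{-0}/|V|^2$; since $d\le 1$ we have $2/d\ge 2$, and $2\Delta\le 4\Delta^2$ for $\Delta\ge 1$, so the uniform bound $(2/d)^{4\Delta^2}/|V|^2$ follows with room to spare. The only conceptual step worth highlighting is the conditional independence used in Case~C, which is why we introduced the auxiliary variables $\tau(y)$ in the definition of a limping homomorphism in the first place; all other estimates are direct citations of Lemma~\ref{lem:probh(x)=v}.
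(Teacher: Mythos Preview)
Your proof is correct. The case split differs from the paper's: the paper organises the argument by the structural relationship between $x$ and $y$ (adjacent; in different components or separated by at least two primary vertices, so that $h(x)$ and $h(y)$ are outright independent; both secondary at distance two), invoking \ref{it:probedge} and \ref{it:prob-sec} for the first two cases and exposing $F-\{x,y\}$ for the last. You instead split purely by vertex type. Your Case~C is the cleaner of the two: by conditioning on the placement of all primary vertices and observing that $h(x)$ and $h(y)$ then depend on the independent variables $\tau(x)$ and $\tau(y)$ alone, you handle every pair of secondary vertices at once, without having to single out the distance-two sub-case. The paper's route gives slightly sharper intermediate bounds in the ``independent'' sub-cases, but that precision is not needed for the stated conclusion.

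One small point to tighten: in Case~C you cite Lemma~\ref{lem:probh(x)=v}\ref{it:prob-history} with the history $\mathcal H_0$, but as stated that item takes $\mathcal H$ to be a placement of \emph{all} vertices except the target, not just the primary ones. This is harmless, since the conditional law of $h(y)$ given $\mathcal H_0$ already coincides with its conditional law given any full history extending $\mathcal H_0$ (the neighbours of a secondary vertex are all primary), but a half-sentence saying so would make the citation airtight.
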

\begin{proof}
  If $x$ and $y$ form an edge, then this follows from
  Lemma~\ref{lem:probh(x)=v}\ref{it:probedge} because
  \begin{equation*}
    \frac{\big(1+\alpha\left(\frac{2}{d}\right)^\Delta\big)^{\Delta+2}}{d}
    =\frac{(d^\Delta+\alpha 2^\Delta)^{\Delta+2}}{d^{\Delta(\Delta+2)+1}}
    \le\frac{2^{\Delta(\Delta+2)}}{d^{4\Delta^2}}
    \le\Big(\frac{2}{d}\Big)^{4\Delta^2} \,.
  \end{equation*}
  If $x$ and $y$ are in
  different components, or the path from $x$ to $y$ contains at least two
  primary vertices, then~$h(x)$ and~$h(y)$ are independent, and thus the
  claim follows from Lemma~\ref{lem:probh(x)=v}\ref{it:prob-prim}
  and~\ref{it:prob-sec} and a similar calculation as in the previous case. 

  Thus the only remaining case is that $x$ and $y$ are both secondary and
  at distance two. We now first expose the entire embedding of $F-\{x,y\}$.
  Then either the image of $\neighbor(x)$ forms an $\alpha$-bad tuple, or
  it does not. In the former case $x$ is not mapped at all. In the latter
  case, $x$ is chosen uniformly among the at least $(1-\alpha)d^\Delta |V|$
  vertices in $U_x=\neighbor_G\big(h(\neighbor_F(x))\big)$. Likewise, we
  have that $y$ is either not mapped, or it is mapped to a vertex selected
  uniformly in a set $U_y$ with $|U_y|\ge(1-\alpha)d^\Delta |V|$. Hence (even though the sets~$U_x$ are and~$U_y$ are not independent), we get
  $\Prob[h(x)=u \text{ and } h(y)=v]\le
  (\frac{1}{(1-\alpha)d^\Delta|V|})^2\le(\frac{2}{d})^{4\Delta^2}\frac{1}{|V|^2}$.
\end{proof}

\begin{lemma}\label{lem:VC}
  Suppose that we are given $\alpha\in(0,\frac14)$, a forest $F$ of maximum
  degree at most $\Delta$ with a bipartition into primary and secondary
  vertices, and an $(\alpha,\Delta)$-superquasirandom graph $G=(V,E)$ of
  density $d$.

  Let $h$ be the limping homomorphism of $F$ to $G$. Suppose that $v\in V$
  is arbitrary, $x\in
  V(F)$ is an arbitrary primary vertex, and $y\in V(F)$ is an arbitrary
  secondary vertex. Then we have:
\begin{enumerate}[label=\abc]
\item\label{it:lemVCa} $\Prob\left[\exists z\in V(F)\setminus \{x\}\::\:
    h(x)=h(z)\right]\le \frac {v(F)}{|V|}$ and \\
  $\Prob\left[\exists z\in V(F)\setminus \{y\}\::\: h(x)=h(z) \;|\;h(y)=v\right]
  \le
  \frac{2v(F)}{d^\Delta(1-\alpha(\frac{2}{d})^\Delta)^{\Delta+3}|V|}$. 
	%%%\footnote{JB:The conditioned version is new, please check.}
\item\label{it:lemVCb} $\Prob\left[\exists z\in V(F)\setminus \{y\}\::\:
    h(y)=h(z)\right]\le \frac{2v(F)}{d^\Delta |V|}$ and \\
  $\Prob\left[\exists z\in V(F)\setminus \{y\}\::\: h(y)=h(z) \;|\;h(x)=v\right]
  \le \frac{2v(F)}{d^\Delta |V|}$. 
	%%%\footnote{JB:The conditioned version is also new.}
\item\label{it:lemVCc} For the number of colliding vertices $\VC=\{z\in V(F):\exists z'\::\:
  h(z)=h(z')\}$ and every $t>0$ we have $\Prob\left[|\VC|\ge
    \frac{2v(F)^2}{d^\Delta |V|}+t\right]\le
  2\exp(-\frac{t^2}{2(\Delta+1)^2 v(F)})$.
\end{enumerate}
\end{lemma}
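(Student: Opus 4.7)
The plan is to handle~\ref{it:lemVCa} and~\ref{it:lemVCb} by union bounds that combine per-pair collision estimates with the history-based bound of Lemma~\ref{lem:probh(x)=v}\ref{it:prob-history}, and then to deduce~\ref{it:lemVCc} from these two parts via linearity of expectation together with McDiarmid's Inequality (Lemma~\ref{lem:McDiarmid}) applied to the product probability space~$\Omega_F$ from~\eqref{eq:productspace}.

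For the first inequality in~\ref{it:lemVCa}, I first plan to show $\Prob[h(x)=h(z)]\le 1/|V|$ for every $z\in V(F)\setminus\{x\}$. If $xz\in E(F)$, then $z$ is secondary and, whenever it is not skipped, $h(z)\in\neighbor_G(h(x))$, so the loopless-ness of~$G$ forces $h(z)\neq h(x)$ and the probability is zero. If $xz\notin E(F)$, I expose every coordinate of~$\Omega_F$ except~$\omega_x$, which makes $h(z)$ a deterministic function of the exposed coordinates while $h(x)=\omega_x$ remains uniform on~$V$; hence the conditional probability of $h(x)=h(z)$ is at most~$1/|V|$. A union bound over~$z$ gives the first inequality. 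For the second inequality I would use Bayes' formula to rewrite the conditional probability as
\[
\frac{\Prob[\exists z\colon h(x)=h(z)\text{ and }h(y)=v]}{\Prob[h(y)=v]},
\]
bound the numerator by combining the per-pair estimate just established with Lemma~\ref{lem:probh(x)=v}\ref{it:prob-history} applied to $h(y)$ conditioned on the values of $h(x)$ and $h(z)$, and bound the denominator from below via Lemma~\ref{lem:probh(x)=v}\ref{it:prob-sec}.

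Part~\ref{it:lemVCb} will follow analogously, and a bit more directly, since $y$ itself is secondary: for every $z\neq y$ and every history~$\mathcal H$ of~$h$ on $V(F)\setminus\{y\}$, Lemma~\ref{lem:probh(x)=v}\ref{it:prob-history} gives $\Prob[h(y)=h(z)\mid\mathcal H]\le 2/(d^\Delta|V|)$; averaging over~$\mathcal H$ (and, for the second inequality, additionally conditioning on the event $h(x)=v$, which is measurable with respect to a refinement of~$\mathcal H$) yields the per-$z$ bound, and a union bound over the at most $v(F)$ choices of~$z$ completes both inequalities.

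For~\ref{it:lemVCc}, linearity of expectation together with~\ref{it:lemVCa} and~\ref{it:lemVCb} immediately gives $\Exp[|\VC|]\le 2v(F)^2/(d^\Delta|V|)$, so only a concentration estimate around this mean remains. I plan to view $|\VC|$ as a function on the product space~$\Omega_F$, which has exactly $v(F)$ coordinates, and to invoke Lemma~\ref{lem:McDiarmid} with Lipschitz constant $C=2(\Delta+1)$. The main obstacle is verifying this Lipschitz bound: changing a primary coordinate~$\omega_x$ alters $h(x)$ together with $h(y)$ for every secondary neighbour~$y$ of~$x$, so at most $1+\deg_F(x)\le\Delta+1$ image values of~$h$ change, whereas changing a secondary coordinate $\tau(y)$ alters only~$h(y)$. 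A short case analysis, splitting according to how many vertices other than~$v$ sit at~$h(v)$'s old and new images, then shows that reassigning any single image value~$h(v)$ flips the $\VC$-status of at most two vertices in total, the key point being that $v$'s own flip can happen only when at most one of its old or new image already has exactly one other occupant, so the flip of~$v$ can be charged against the absence of a partner flip on the opposite side. Summing this per-value bound over the at most $\Delta+1$ simultaneous image changes caused by altering one coordinate of~$\Omega_F$ yields $C\le 2(\Delta+1)$, and plugging $C=2(\Delta+1)$ and $k=v(F)$ into Lemma~\ref{lem:McDiarmid} produces exactly the stated tail bound.
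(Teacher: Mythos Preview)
Your proposal is correct and follows essentially the same strategy as the paper: exposure arguments together with Lemma~\ref{lem:probh(x)=v}\ref{it:prob-history} for parts~\ref{it:lemVCa} and~\ref{it:lemVCb}, followed by linearity of expectation and McDiarmid's Inequality on~$\Omega_F$ with a $2(\Delta+1)$-Lipschitz bound for part~\ref{it:lemVCc}. The only cosmetic differences are that the paper handles the first inequality of~\ref{it:lemVCa} by a single exposure of $F-(\{x\}\cup\neighbor_F(x))$ rather than your per-$z$ exposure, and for the second inequality of~\ref{it:lemVCa} it factors the numerator as $\Prob[h(y)=v\mid\mathcal E]\cdot\Prob[\mathcal E]$ rather than bounding it term by term; your Lipschitz justification is in fact more carefully spelled out than the paper's.
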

\begin{proof}
  \ref{it:lemVCa} We expose the entire embedding of
  $F-\big(\{x\}\cup \neighbor_F(x)\big)$. 
	This is compatible with the order of embedding in the definition of limping homomorphisms
	because all vertices in~$\neighbor_F(x)$ are secondary, and they are the only
  secondary vertices whose embedding depends on the embedding of~$x$.
  Let~$W$ be the image of the vertices in $F-\big(\{x\}\cup
  \neighbor_F(x)\big)$. Observe that the event~$\mathcal E$ that there
  is~$z\in V(F)\setminus \{x\}$ with $h(x)=h(z)$ occurs if and only if the
  event~$\mathcal E'$ that $h(x)\in W$ occurs. But, no matter which
  vertices ended up in the set~$W$, the probability of~$\mathcal E'$
  (conditioned on~$W$) is $\frac{|W|}{|V|}\le \frac{v(F)}{|V|}$.  Hence
  $\Prob\left[\mathcal E\right]\le \frac {v(F)}{|V|}$.

  The second part of~\ref{it:lemVCa} follows from
  \begin{equation*}
    \Prob\left[\mathcal E \;|\;h(y)=v\right]
    =\frac{
      \Prob\left[h(y)=v \;|\; \mathcal E\right]
      \cdot \Prob\left[\mathcal E\right]
    }{\Prob[h(y)=v]}
    \le \frac{\frac{2}{d^\Delta|V|} \cdot \frac{v(F)}{|V|}}
    {\frac{\big(1-\alpha\left(\frac{2}{d}\right)^\Delta\big)^{\Delta+3}}{|V|}}
    \,,
  \end{equation*}
  where we use Lemma~\ref{lem:probh(x)=v}\ref{it:prob-sec} and
  Lemma~\ref{lem:probh(x)=v}\ref{it:prob-history}.

  \smallskip

  \noindent
  \ref{it:lemVCb} We expose the entire embedding of $F-\{y\}$. Let $W$ be
  the image of the vertices in $F-\{y\}$. Then we either know that~$y$ is
  skipped, or we place~$y$ u.a.r.\ in a set of size at least
  $(1-\alpha)d^\Delta|V|$. Similarly as in~\ref{it:lemVCa} the event we are
  interested in occurs if and only if $h(y)\in W$, which (conditioned
  on~$W$) has probability at most $\frac{|W|}{(1-\alpha)d^\Delta |V|}\le
  \frac{2v(F)}{d^\Delta |V|}$.
	This reasoning is valid even in the conditional space $h(x)=v$.

  \smallskip

  \noindent
  \ref{it:lemVCc} Using the bounds from~\ref{it:lemVCa}
  and~\ref{it:lemVCb}, we get $\Exp\left[|\VC|\right]\le
  \frac{2v(F)^2}{d^\Delta |V|}$.  We would now like to apply McDiarmid's
  inequality, Lemma~\ref{lem:McDiarmid}, to show concentration of
  $|\VC|$. For this purpose we consider the product space~$\Omega_F$
  from~\eqref{eq:productspace} and view $|\VC|$ as a function from~$\Omega_F$
  to~$\mathbb R$. We claim that $|\VC|$ is $2(\Delta+1)$-Lipschitz. Indeed,
  consider first the case that for a single secondary vertex~$y$ the random
  real~$\tau(y)$ changes. This only
  effects the embedding of~$y$ and hence $|\VC|$ changes by~$2$ at most. If,
  on the other hand, for a single primary vertex~$x$ the random choice
  of~$h(x)$ changes, then only the embedding of~$x$ and possibly its
  neighbours is effected. Hence in this case $|\VC|$ changes by at most
  $2(\Delta+1)$, as claimed. Therefore McDiarmid's Inequality
  (Lemma~\ref{lem:McDiarmid}) implies that
  \begin{equation*}
    \Prob\left[|\VC|\ge  \tfrac{2v(F)^2}{d^\Delta |V|}+t\right]
    \le \Prob\Big[|\VC|\ge \Exp\big[|\VC|\big]+t\Big]
    \le 2\exp\left(-\frac{2t^2}{(2(\Delta+1))^2v(F)}\right)\;.
    \qedhere
  \end{equation*}
\end{proof}

\section{Proof of the Nibble Lemma (Lemma~\ref{lem:newnibble})}\label{sec:proofofnibble}
Suppose that the numbers $\epsilon,\beta, c,\Delta$ are given. Let us take
$$0<\alpha\ll\alphaA\ll\alphaB\ll\alphaC\ll\alphaD\ll\alphaE\ll\beta\;.$$
That is we fix (in this order) $\alphaE$, $\alphaD$, $\alphaC$, $\alphaB$,
$\alphaA$, and $\alpha$ sufficiently small as a function of
$\epsilon,\beta, c,\Delta$, and of the previously fixed constants. Given
$r$, let $n_0$ be sufficiently large. Let~$\mathcal F$, $G$ and~$\mathcal
U_i$ be as in the setting of Lemma~\ref{lem:newnibble}.

%\footnote{HONZA: Some Diana's conditions on the constants put
  %in comments (now, I got rid of these}
%Parameters: Set $\alpha=\alpha_{\mathrm{L\ref{lem:superquasirandom}}}
%\varepsilon^2/2$ with input $\alpha'_{\mathrm{L\ref{lem:superquasirandom}}}= \alphaA$.
%
%We need also $\alpha\le \frac{\beta^4}{10^4c^4}$ for the quasirandomness
%
%
%$\alphaB$ such that $1\pm \alphaB=\frac {(1\pm \alpha)^{\Delta+3}}{(1\pm \alphaA)^2}$
%\medskip

For each $i\in [c]$ and each $s\in [k_i]$, 
the graph $G[V_{i,s}]$ has order at least $\varepsilon n$,
and hence, by Lemma~\ref{prop:SubgraphOfQuasiRandomGraph},
it is a $(3\alpha/\varepsilon^2)$-quasirandom graph of density $d\pm 3\alpha/\varepsilon^2$.
By Lemma~\ref{lem:superquasirandom}, this implies that $G[V_{i,s}]$ contains an almost spanning induced subgraph $G_{i,s}$ that is $(\alphaA,\Delta)$-superquasirandom and has order \marginpar{$m_{i,s}$}
\begin{equation}
\label{eq:lowerboundMIS}
m_{i,s}\ge (1- \alphaA)|V_{i,s}|>\epsilon n/2 
\end{equation}
and density $d_{i,s}=d\pm \alphaA$. 
Since $\mathcal U_i$ is $(\alpha,n)$-homogeneous, we have that $||U_{i,s}|-|U_{i,s'}||\le \alpha n$ for each $s,s'\in [k_i]$. Consequently, $m_{i,s}=(1\pm 2\alphaA) m_{i,s'}$. Thus, we can choose numbers $m_i>\epsilon n/2$ such that
\begin{equation}
\label{eq:Vis_samesize}
m_{i,s} = (1\pm \alphaA)  m_i.
\end{equation}
Finally, we recall that 
\begin{equation}
\label{eq:BoundsOnVFis}
(1-\alpha) \frac{n}{2r} \le n_{i,s}=v(F_{i,s}) \le \frac{2n}{r} .
\end{equation}

We now define the limping homomorphism~$h_{i,s}$ of~$(F_{i,s}-X_{i,s})$ to~$G_{i,s}$ so that the vertices of $V(F_{i,s})\setminus X_{i,s}$ of odd distance from~$X_{i,s}$ are the primary vertices and the ones at even distance are the secondary vertices. 
We denote the set of the primary and the secondary vertices in~$F_{i,s}$ by~\marginpar{$\primary_{i,s}$}$\primary_{i,s}$, 
and by~\marginpar{$\secondary_{i,s}$}$\secondary_{i,s}$, respectively. 
Let \marginpar{$\primary$}$\primary=\bigcup_{i,s}\primary_{i,s}$ 
and 
\marginpar{$\secondary$}$\secondary=\bigcup_{i,s}\secondary_{i,s}$.  
Let~\marginpar{$Y_{i,s}$}$Y_{i,s}$ denote the set of vertices skipped by~$h_{i,s}$.
Notice that 
$$
X_{i,s} \cap Y_{i,s} = \emptyset \qquand
F_{i,s}[X_{i,s} \cup Y_{i,s}] \text{ is an independent set\;,}
$$
because the vertices in $Y_{i,s}$ are at even distance from $X_{i,s}$ and hence in the same colour class as $X_{i,s}$.

Let $h:\bigcup_{i,s}F_{i,s}\to G$\marginpar{$h$} be the union of the homomorphisms $h_{i,s}$, and let~\marginpar{$H$}$H\subset G$ denote the image of the edges of the graphs $F_{i,s}$
under $h$, i.e. $H=\bigcup_{i,s}E(h_{i,s})$.

It is our goal to show that the random partial homomorphisms $h_{i,s}$ satisfy the assertions of the lemma with positive probability. We will show that each of the assertions is actually met with high probability. The following table shows lemmas corresponding to individual assertions:
\begin{center}
\begin{tabular}{cccccccc}%{|c|c|c|c|c|c|c|c|}
\hline 
\ref{cond:NumberSkippedVertices}  & \ref{cond:VertexCollisions}  & \ref{cond:EdgeCollisions}  & \ref{cond:NCN} & \ref{cond:NumberSkippedNeighbours} & \ref{cond:rootneighbours} & \ref{cond:GStillQuasirandom} & \ref{cond:UStillSmallLoad} \\ 
%\hline 
Lem~\ref{lem:Yis_small} & Lem~\ref{lem:vertexcollisions} & Lem~\ref{lem:edgecollisions} & Lem~\ref{lem:NCN} & Lem~\ref{lem:skippedInneighbourhood} & Lem~\ref{lem:XN} & Lem~\ref{lem:quasirandom} & Lem~\ref{lem:load} and Lem~\ref{lem:homogeneous} \\ 
\hline 
\end{tabular} 
\end{center}
%\begin{itemize}
%\item\ref{cond:NumberSkippedVertices} Lemma~\ref{lem:Yis_small}
%\item\ref{cond:VertexCollisions} Lemma~\ref{lem:vertexcollisions}
%\item\ref{cond:EdgeCollisions} Lemma~\ref{lem:edgecollisions}
%\item\ref{cond:NCN} Lemma~\ref{lem:NCN}
%\item\ref{cond:NumberSkippedNeighbours} Lemma~\ref{lem:skippedInneighbourhood}
%\item \ref{cond:rootneighbours} Lemma~\ref{lem:XN}
%\item\ref{cond:GStillQuasirandom} Lemma~\ref{lem:quasirandom}
%\item\ref{cond:UStillSmallLoad} Lemma~\ref{lem:load} and
%Lemma~\ref{lem:homogeneous}.
%\end{itemize}

In addition to the parameters controlled by the lemma, we need to control the following quantities.
For $v\in V$, define $D_P(v)$ and $D_S(v)$ to be the number of all 
primary and secondary vertices, respectively,
%, in the $F_{i,s}$ 
that are mapped to $v$,\marginpar{$D_P(v)$}\marginpar{$D_S(v)$}
\begin{align*}
D_P(v) = \left| h^{-1}(v) \cap \primary \right| \quad \mbox{and} \quad
D_S(v) = \left| h^{-1}(v) \cap \secondary \right| .
%
%D_P(v) = \left| \bigcup_{i,s} (h_{i,s}^{-1}(v) \cap \primary_{i,s}) \right| \quad \mbox{and} \quad
%D_S(v) = \left| \bigcup_{i,s} (h_{i,s}^{-1}(v) \cap \secondary_{i,s}) \right| .
\end{align*}

\begin{lemma}
\label{lem:DPandDSaresmall} We have
\begin{align}
\label{eq:DPand1}
\Prob\left[ \exists v\in V \colon D_P(v) > \frac{15n}{\eps r} \right] &\le \exp(-\sqrt{n})\;\mbox{, and}\\
\label{eq:DPand2}
\Prob\left[ \exists v\in V \colon D_S(v) > \frac{15n}{\eps r}
\right] 
&\le \exp(-\sqrt{n})\;.
\end{align}
Further, the same bounds hold, if we condition on $h(z)=u$ for an arbitrary
$z\in V(F_{i,s})$ with $i\in[c]$ and $s\in[k_i]$ and $u\in
V(G_{i,s})$. 
\end{lemma}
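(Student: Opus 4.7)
The approach is to estimate $\Exp[D_P(v)]$ and $\Exp[D_S(v)]$, derive exponentially small upper tails (by the Chernoff inequality for the primary count, which is a sum of independent Bernoullis, and by Talagrand's Inequality for the secondary count, which is not), and finally pass from the unconditional to the conditional bounds by elementary manipulation.

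For the expectations, each primary $x\in\primary_{i,s}$ is placed uniformly in $V(G_{i,s})$, so $\Prob[h(x)=v]\le 1/m_{i,s}$. Combining $m_{i,s}\ge \eps n/2$ from~\eqref{eq:lowerboundMIS}, $n_{i,s}\le 2n/r$ from~\eqref{eq:BoundsOnVFis}, and $\sum_i k_i\le 2n$ gives $\Exp[D_P(v)]\le 8n/(\eps r)$. Similarly, Lemma~\ref{lem:probh(x)=v}\ref{it:prob-sec} yields $\Exp[D_S(v)]\le 9n/(\eps r)$. Since the placements of distinct primary vertices are mutually independent, $D_P(v)$ is a sum of independent Bernoullis; applying~\eqref{eq:Ch3} with $\delta'=15/8$ gives $\Prob[D_P(v)\ge 15n/(\eps r)]\le\exp(-\Omega(n/r))$, and a union bound over the at most $(1+\eps)n$ choices of $v$ establishes~\eqref{eq:DPand1}.

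The concentration of $D_S(v)$ is the main obstacle, because secondary vertices are not independent: a primary $x$ influences all of its at most $\Delta$ secondary neighbours. The plan is to apply Talagrand's Inequality (Lemma~\ref{lem:Talagrand}) to $D_S(v)$ viewed as a function on the product space $\Omega_F$ from~\eqref{eq:productspace}. Two conditions must be checked. First, the Lipschitz constant: changing $\tau(y)$ alone affects only $h(y)$ (change at most $1$), while changing a primary placement $h(x)$ affects only the at most $\Delta$ secondary neighbours of $x$, all of which lie in the single tree $F_{i,s}$ containing $x$ (change at most $\Delta$); hence $C=\Delta$ suffices. Second, the certifier: given an outcome with $D_S(v)\ge\Lambda$, pick $\Lambda$ secondary vertices $y$ with $h(y)=v$ and, for each, list $\tau(y)$ together with $h(x)$ for each primary neighbour $x$ of $y$. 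These at most $(\Delta+1)\Lambda$ coordinate values determine both whether each chosen $y$ is skipped and (if not) the value of $h(y)$, independently of the remaining coordinates --- so $c=\Delta+1$ suffices. Plugging in $t\approx 6n/(\eps r)$ (so that $\Exp[D_S(v)]+t\le 15n/(\eps r)$) gives $\Prob[D_S(v)\ge 15n/(\eps r)]\le\exp(-\Omega(n/r))$, and a union bound over $v$ completes~\eqref{eq:DPand2}.

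For the conditional bounds, the plan is to use the crude inequality $\Prob[A\mid h(z)=u]\le \Prob[A]/\Prob[h(z)=u]$. By Lemma~\ref{lem:probh(x)=v}\ref{it:prob-prim} (if $z\in\primary$) or~\ref{it:prob-sec} (if $z\in\secondary$) we have $\Prob[h(z)=u]\ge 1/(2m_{i,s})\ge 1/n$, while the unconditional bounds just proved are $\exp(-\Omega(n))$; the extra multiplicative factor of $n$ is absorbed and the conditional probability stays comfortably below $\exp(-\sqrt{n})$ for $n$ large enough. The real work is therefore the Talagrand step for $D_S(v)$: the Lipschitz constant remains bounded precisely because each primary vertex lives in only one tree, and the certifier stays linear in $\Lambda$ precisely because a secondary vertex's placement depends only on its primary neighbours.
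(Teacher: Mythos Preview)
Your argument is correct. The treatment of $D_P(v)$ matches the paper's, but you diverge from the paper in two places.

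For $D_S(v)$ the paper does \emph{not} use Talagrand's Inequality. Instead it observes that two secondary vertices $y,y'$ at distance at least~$3$ in the same forest have independent images (the $y$--$y'$ path then contains two primary vertices), takes a proper colouring of the square $F_{i,s}^2[\secondary_{i,s}]$ with $\Delta^2+1$ colours, and applies the Chernoff bound~\eqref{eq:Ch3} to each colour class separately before summing. Your Talagrand route is shorter and avoids the auxiliary colouring; the paper's route is perhaps more elementary and keeps all tail bounds in the Chernoff family. Both give $\exp(-\Omega(n/r))$ per vertex, which is what is needed.

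For the conditional statement the paper argues directly that conditioning on $h(z)=u$ affects the placement of at most $\Delta^2+1$ forest vertices, so the (Chernoff) analysis goes through essentially unchanged. Your cruder inequality $\Prob[A\mid h(z)=u]\le \Prob[A]/\Prob[h(z)=u]$ also works because the unconditional bounds are $\exp(-\Omega(n))$ while the target is only $\exp(-\sqrt{n})$. One numerical slip: $1/(2m_{i,s})\ge 1/n$ is false in general since $m_{i,s}$ may be as large as $(1+\eps)n$; the correct bound is $\Prob[h(z)=u]\ge 1/(2m_{i,s})\ge 1/(4n)$, which is of course still enough.
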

\begin{proof}
We fix a vertex $v\in V$ and first compute the expected number of primary vertices mapped to $v$. 
For every $i\in [c]$ and $s\in[k_i]$, we embed at most
$v(F_{i,s})\le  2n/r$ primary vertices into the set~$V(G_{i,s})$ with 
$m_{i,s} \ge \eps n/2$ vertices. Since there are at most $2n$ choices of pairs $(i,s)$, this gives that
$\Exp[D_P(v)]=\sum_{i,s}\sum_{v\in V(F_{i,s})}\frac{1}{m_{i,s}}\le  \frac{8n}{\eps r}$.
The Chernoff bound~\eqref{eq:Ch2} with $\mu=8n/(\eps r)$ and $\delta=\frac12$ and a union bound over all choices of $v$ gives~\eqref{eq:DPand1}.

\smallskip

To prove~\eqref{eq:DPand2}, let us again fix a vertex $v\in V$. 
Lemma~\ref{lem:probh(x)=v}\ref{it:prob-sec} gives that for a fixed secondary vertex~$y$,
\begin{equation}
\label{eq:ProbVisHit}
\Prob[h(y)=v]
\le \frac{\left(1+\alphaA\left(\frac{2}{d\pm 2\alphaA}\right)^{\Delta}\right)^{\Delta+3}}{m_{i,s}}
\leByRef{eq:lowerboundMIS} \frac{3}{\epsilon n}.
\end{equation}
%As there are at most $\frac{4n^2}{r}$ secondary vertices in total, we get $\Exp[D_S(v)]\le \frac{12n}{\epsilon r}$.

For each $(i,s)$ consider the square $F_{i,s}^2[\secondary_{i,s}]$ of the graph $F_{i,s}[\secondary_{i,s}]$. 
This graph has maximum degree at most $\Delta^2$, and thus is $(\Delta^2+1)$-colourable.
Let $V(F_{i,s})=C_{i,s}^1\dcup \dots\dcup C_{i,s}^{\Delta^2+1}$ be a colouring of $F^2_{i,s}[\secondary_{i,s}]$.
Note that the events $h(x)=v$ and $h(x')=v$ for $x\neq x' \in C_{i,s}^{\ell}$ are independent,
because the unique $x,x'$-path in $F_{i,s}$ contains at least two primary vertices.
The same reasoning gives that the events $\{h(x)=v\}_{x\in C_{i,s}^{\ell}}$
are in fact mutually independent.

We let $C^{\ell}= \bigcup_{i,s} C_{i,s}^{\ell}$ and $Z^\ell=\left|C^\ell\cap h^{-1}(v)\right|$.
Since we have at most $2n$ forests $F_{i,s}$, it follows from~\eqref{eq:BoundsOnVFis} that
\begin{equation}
\label{eq:upperCL}
|C^{\ell}| \le \sum_{\ell'} |C^{\ell'}| \le \sum_{i,s} v(F_{i,s}) \le 4n^2 / r \,.
\end{equation}
Thanks to the bound in~\eqref{eq:ProbVisHit} and the mutual independence described above, the random variable
$Z^{\ell}$ is stochastically dominated by a random variable $Z\in\mathrm{Bin}(|C^{\ell}|,3/(\eps n))$. 
We would like to apply the Chernoff bound in~\eqref{eq:Ch3} with
\[
\mu=|C^{\ell}|3/(\eps n) \text{\quad and \quad}
\delta'=1 + \frac{1}{10^3(\Delta^2+1)} \text{\quad and \quad} 
t= \mu + \frac{n}{10 (\Delta^2+1)\eps r} \,.
\]
We check the condition of~\eqref{eq:Ch3},
\begin{equation*}
\delta' \mu = \left( 1+\frac{1}{10^3 (\Delta^2+1)}\right) \mu = \mu + \frac{3|C^{\ell}|}{10^3 (\Delta^2 +1)\eps n}
\leByRef{eq:upperCL} \mu + \frac{12n}{10^3 (\Delta^2 +1)\eps r} \le t.
\end{equation*}
%
%\begin{equation*}\begin{split}
%\delta' \mu &= \left( 1+\frac{1}{10^3 (\Delta^2+1)}\right) \mu = \mu + \frac{3|C^{\ell}|}{10^3 (\Delta^2 +1)\eps n}
%\leByRef{eq:upperCL} \mu + \frac{12n}{10^3 (\Delta^2 +1)\eps r} \\
%&\le \mu + \frac{n}{10 (\Delta^2 +1) \eps r} = t.
%\end{split}\end{equation*}
Hence we can indeed apply~\eqref{eq:Ch3} and obtain $\delta''>0$ 
(independent of $n$) for which 
\[
\Prob\left[Z^{\ell}\ge \mu + \frac{n}{10 (\Delta^2+1)\eps r}\right] \le \exp\left(-\delta'' \frac{n}{10 (\Delta^2+1)\eps r}\right) \,.
\]
By a union bound over all $\ell\in [\Delta^2+1]$ we get that with probability at least $1-\exp(-n^{2/3})$
\begin{align*}
D_S(v) &= \sum_{\ell=1}^{\Delta^2 +1} Z^\ell 
\le \sum_{\ell=1}^{\Delta^2 +1} \left(\mu + \frac{n}{10 (\Delta^2+1)\eps r}\right)\\ 
&= \sum_{\ell=1}^{\Delta^2 +1} \left(|C^{\ell}| 3/(\eps n) + \frac{n}{10 (\Delta^2+1)\eps r}\right)
\leByRef{eq:upperCL} \frac{3}{\eps n} \frac{4n^2}{r} + \frac{1}{10} \frac{n(\Delta^2+1)}{(\Delta^2+1)\eps r}
\le \frac{14n}{\eps r}.
\end{align*}
Finally, another union bound over all $v\in V$ 
shows that~\eqref{eq:DPand2} is satisfied.

Since the placement of all but at most $\Delta^2+1$ of the forest vertices
is independent of the placement of~$z$ we also get the bounds
from~\eqref{eq:DPand1} and~\eqref{eq:DPand2} if we condition on $h(z)=u$.
\end{proof}

\begin{lemma}
\label{lem:ImageDegree}
Let $z\in V(F_{i,s})$ with $i\in[c]$ and $s\in[k_i]$ and $v\in
V(G_{i,s})$ be arbitrary. 
\[\Prob\left[\Delta(H)>\frac{30\Delta n}{\epsilon r}\right]\le
2\exp(-\sqrt{n})
\quad\text{and}\quad
\Prob\left[\Delta(H)>\frac{30\Delta n}{\epsilon r} \;|\; h(z)=v\right]\le
2\exp(-\sqrt{n})
\;.\]
\end{lemma}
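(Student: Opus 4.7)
\medskip
\noindent
\textbf{Proof plan for Lemma~\ref{lem:ImageDegree}.}
The plan is to reduce the statement directly to Lemma~\ref{lem:DPandDSaresmall} via the obvious combinatorial bound on $\deg_H(v)$. Namely, every edge of $H$ incident to $v$ arises as $h_{i,s}(x)h_{i,s}(y)$ for some $(i,s)$ and some edge $xy\in E(F_{i,s})$ with $h_{i,s}(x)=v$. The number of forest vertices mapped to $v$ equals $D_P(v)+D_S(v)$, and each such vertex has at most $\Delta$ neighbours in its forest. Hence, deterministically,
\begin{equation*}
  \deg_H(v)\;\le\;\Delta\bigl(D_P(v)+D_S(v)\bigr)\qquad\text{for every }v\in V.
\end{equation*}

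Therefore the event $\{\Delta(H)>30\Delta n/(\epsilon r)\}$ is contained in the event $\{\exists v\in V\colon D_P(v)+D_S(v)>30n/(\epsilon r)\}$, which in turn is contained in the union of the two events $\{\exists v\colon D_P(v)>15n/(\epsilon r)\}$ and $\{\exists v\colon D_S(v)>15n/(\epsilon r)\}$. Applying Lemma~\ref{lem:DPandDSaresmall} together with a union bound over these two events yields
\begin{equation*}
  \Prob\Bigl[\Delta(H)>\tfrac{30\Delta n}{\epsilon r}\Bigr]\;\le\;2\exp(-\sqrt n),
\end{equation*}
as claimed.

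For the conditional statement, I would invoke the last sentence of Lemma~\ref{lem:DPandDSaresmall}, which asserts that the same tail bounds on $D_P(v)$ and $D_S(v)$ remain valid after conditioning on $h(z)=v$ for any fixed forest vertex $z$ and any $v\in V(G_{i,s})$. The very same deterministic inequality $\deg_H(v)\le\Delta(D_P(v)+D_S(v))$ and union bound then give the conditional conclusion. There is essentially no obstacle here: all the probabilistic work has already been carried out in Lemma~\ref{lem:DPandDSaresmall}, and this lemma is only a packaging step that converts a bound on vertex preimages into a bound on image degrees.
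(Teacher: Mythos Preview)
Your proposal is correct and is precisely the paper's own argument: the paper's proof consists of the single sentence that $\Delta(H)\le \Delta\cdot\max_v(D_P(v)+D_S(v))$ and an appeal to Lemma~\ref{lem:DPandDSaresmall}, which you have merely spelled out in more detail. The only cosmetic quibble is that you use $v$ both as the running vertex in the degree bound and as the fixed vertex in the conditioning event $h(z)=v$, but the logic is unaffected.
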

\begin{proof}
This follows from the fact that 
%$\Delta(H)\le \Delta\cdot(D_P+D_S)$ 
$\Delta(H)\le \Delta\cdot\max_v(D_P(v)+D_S(v))$
and from Lemma~\ref{lem:DPandDSaresmall}.
\end{proof}

\begin{lemma}
\label{lem:Yis_small} 
We have
$$
\Prob\left[\forall i\in [c] \quad \forall s\in [k_i]\colon \quad |Y_{i,s}| \le \frac{\beta n}{r}\right] 
\ge 1- \exp(-\sqrt{n}).
$$
\end{lemma}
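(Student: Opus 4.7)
The plan is to bound $\Exp[|Y_{i,s}|]$ first using Lemma~\ref{lem:probh(x)=v}\ref{it:prob-skipped}, then use McDiarmid's inequality on the product space $\Omega_{F_{i,s}}$ from~\eqref{eq:productspace} to obtain concentration, and finally take a union bound over the at most $2n$ pairs $(i,s)$.

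For the expectation, fix $(i,s)$. A vertex of $F_{i,s}$ can be skipped only if it is secondary, and by Lemma~\ref{lem:probh(x)=v}\ref{it:prob-skipped} (applied in the $(\alphaA,\Delta)$-superquasirandom graph $G_{i,s}$, whose order exceeds $4\Delta/d$ by~\eqref{eq:lowerboundMIS}), each secondary vertex $y$ satisfies $\Prob[y\in Y_{i,s}]\le\alphaA$. By linearity and~\eqref{eq:BoundsOnVFis},
\[
\Exp[|Y_{i,s}|]\le\alphaA\cdot v(F_{i,s})\le\alphaA\cdot\tfrac{2n}{r}\le\tfrac{\beta n}{2r},
\]
where we used $\alphaA\ll\beta$.

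For the concentration, view $|Y_{i,s}|$ as a function on $\Omega_{F_{i,s}}=V(G_{i,s})^{\primary_{i,s}}\times[0,1)^{\secondary_{i,s}}$. Whether a secondary vertex $y$ is skipped depends only on the images $h(\neighbor_{F_{i,s}}(y))$ of its primary neighbours, and not on the value of $\tau(y)$ nor on any other $\tau(\cdot)$. Hence changing a single $\tau$-coordinate leaves $|Y_{i,s}|$ unchanged, and changing $h(x)$ for a single primary vertex $x$ can alter the skipping status of at most $\deg_{F_{i,s}}(x)\le\Delta$ secondary vertices. Therefore $|Y_{i,s}|$ is $\Delta$-Lipschitz as a function on the at most $v(F_{i,s})\le 2n/r$ coordinates of $\Omega_{F_{i,s}}$. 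Lemma~\ref{lem:McDiarmid} with $t=\beta n/(2r)$ then yields
\[
\Prob\Bigl[|Y_{i,s}|\ge\tfrac{\beta n}{r}\Bigr]
\le\Prob\Bigl[|Y_{i,s}|-\Exp[|Y_{i,s}|]\ge\tfrac{\beta n}{2r}\Bigr]
\le 2\exp\!\left(-\frac{\beta^2 n}{4\Delta^2 r}\right).
\]

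Finally, since $n\ge n_0$ is sufficiently large in terms of $\beta,\Delta,r$ (so that the above right-hand side is at most $\exp(-\sqrt n)/(4n)$), a union bound over the at most $2n$ pairs $(i,s)$ gives
\[
\Prob\Bigl[\exists\, i,s:\;|Y_{i,s}|>\tfrac{\beta n}{r}\Bigr]\le 2n\cdot\frac{\exp(-\sqrt n)}{4n}\le\exp(-\sqrt n),
\]
as required. The only conceptual point is to notice that skipping is determined purely by the primary placement (so secondary $\tau$-coordinates contribute trivially to the Lipschitz constant), which keeps the Lipschitz bound at $\Delta$ rather than something larger.
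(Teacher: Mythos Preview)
Your proof is correct and follows essentially the same approach as the paper: bound the expectation via Lemma~\ref{lem:probh(x)=v}\ref{it:prob-skipped}, observe that $|Y_{i,s}|$ is $\Delta$-Lipschitz on $\Omega_{F_{i,s}}$, apply McDiarmid, and take a union bound. The only cosmetic difference is that the paper applies McDiarmid with $t=2\alphaA n/r$ (obtaining the threshold $4\alphaA n/r$ and then invoking $\alphaA\ll\beta$), whereas you set $t=\beta n/(2r)$ directly; your additional remark that the $\tau$-coordinates do not affect skipping is a helpful clarification the paper leaves implicit.
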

\begin{proof}
Fix $i\in [c]$ and $s\in [k_i]$. 
By Lemma~\ref{lem:probh(x)=v}\ref{it:prob-skipped}, for the number of vertices skipped by $h_{i,s}$ we have 
$\Exp\left[|Y_{i,s}|\right]\le \alphaA \frac {2n}{r}$. 
Note that the number of skipped vertices is $\Delta$-Lipschitz. McDiarmid's Inequality (Lemma~\ref{lem:McDiarmid}) 
with $t=\alphaA 2n/r$ and $k= v(F_{i,s}) \le 2n/r$ gives that
$\Prob\left[|Y_{i,s}|>2\cdot \frac {\alphaA 2n}{r}\right]\le 2\cdot \exp(\frac {-8 \alphaA^2n^2r}{r^22n\Delta^2})=
2\exp(-\frac{4\alphaA^2 n}{r\Delta^2})$. Hence using the union bound over all choices of
$(i,s)$ we obtain
\[\Prob\left[\exists i,s\colon \quad |Y_{i,s}| > \frac{\beta n}{r}\right] \quad\le\quad
\Prob\left[\exists i,s\colon \quad |Y_{i,s}| > \frac {4\alphaA n}{r}\right] \quad\le\quad
\exp(-\sqrt{n})\;.     \qedhere\]
\end{proof}

\begin{lemma}
\label{lem:vertexcollisions}
We have
\[
\Prob\left[\forall i\in [c] \quad \forall s\in [k_i]\colon \quad 
|\VC_{i,s}|\le \frac{20n}{\epsilon r^2d^\Delta}\right] 
\ge 1- \exp(-\sqrt{n}) \;.
\]
%With probability at least $1-\frac {1}{100}$, we have (simultaneously) for all pairs $(i,s)$ that 
%$\VC_{i,s}\le \frac{20n}{\epsilon r^2d^\Delta}$.
\end{lemma}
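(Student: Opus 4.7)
The natural approach is to apply Lemma~\ref{lem:VC}\ref{it:lemVCc} separately for each forest $F_{i,s}$ and then take a union bound over the (at most $2n$) pairs $(i,s)$. Since the limping homomorphisms $h_{i,s}$ are constructed independently for different pairs $(i,s)$, no additional care is needed to combine them.

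Fix $i\in [c]$ and $s\in [k_i]$. Recall that $h_{i,s}$ is a limping homomorphism into the $(\alphaA,\Delta)$-superquasirandom graph $G_{i,s}$ of order $m_{i,s}\ge \eps n/2$ and density $d_{i,s}=d\pm \alphaA\ge\eps/2$, and that $v(F_{i,s})\le 2n/r$ by~\eqref{eq:BoundsOnVFis}. Lemma~\ref{lem:VC}\ref{it:lemVCc} applied to $h_{i,s}$ and any $t>0$ yields
\begin{equation*}
\Prob\Big[|\VC_{i,s}|\ge \tfrac{2v(F_{i,s})^2}{d_{i,s}^\Delta m_{i,s}}+t\Big]\le 2\exp\Big(-\tfrac{t^2}{2(\Delta+1)^2 v(F_{i,s})}\Big).
\end{equation*}
The deterministic term can be bounded by
\begin{equation*}
\tfrac{2v(F_{i,s})^2}{d_{i,s}^\Delta m_{i,s}}\le \tfrac{2(2n/r)^2}{d_{i,s}^\Delta (\eps n/2)}=\tfrac{16 n}{\eps r^2 d_{i,s}^\Delta}\le \tfrac{17n}{\eps r^2 d^\Delta},
\end{equation*}
where the last inequality uses $d_{i,s}=d\pm\alphaA$ with $\alphaA$ small compared to $\eps$ and $\Delta$. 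Choosing $t=\frac{3n}{\eps r^2 d^\Delta}$ makes the sum at most $\frac{20n}{\eps r^2 d^\Delta}$ as required.

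It remains to check that the exponential bound is strong enough to survive the union bound. Using $v(F_{i,s})\le 2n/r$ and $d\ge\eps$, the exponent satisfies
\begin{equation*}
\tfrac{t^2}{2(\Delta+1)^2 v(F_{i,s})}\ge \tfrac{9 n^2/(\eps^2 r^4 d^{2\Delta})}{4(\Delta+1)^2 n/r}=\tfrac{9n}{4(\Delta+1)^2\eps^2 r^3 d^{2\Delta}}\ge \tfrac{9 n}{4(\Delta+1)^2\eps^{2\Delta+2}r^3},
\end{equation*}
which is linear in $n$ (with all other quantities being constants depending only on $\eps,\Delta,r$). Hence for $n_0$ large enough this exponent exceeds $\sqrt n+\log(8n)$, so each failure probability is at most $\frac{1}{4n}\exp(-\sqrt n)$. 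A union bound over the at most $2n$ pairs $(i,s)$ then gives the desired bound $\exp(-\sqrt n)$ on the overall failure probability.

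The only mild subtlety here is the passage from $d_{i,s}^\Delta$ to $d^\Delta$ and the verification that the Lipschitz-based tail in Lemma~\ref{lem:VC}\ref{it:lemVCc} is strong enough after a union bound; neither presents a real obstacle because $\alphaA$ was chosen much smaller than $\beta$ (and all other relevant constants) and $n_0$ was chosen large enough to absorb all constants depending only on $\eps,\Delta,r$.
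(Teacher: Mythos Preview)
Your proof is correct and follows essentially the same route as the paper: fix $(i,s)$, apply Lemma~\ref{lem:VC}\ref{it:lemVCc} with a deviation $t$ of order $n/(\eps r^2 d^\Delta)$, bound the deterministic term using $v(F_{i,s})\le 2n/r$ and $m_{i,s}\ge \eps n/2$, check the tail is $\exp(-\Theta(n))$, and union bound over the at most $2n$ pairs. The only differences are cosmetic (you take $t=3n/(\eps r^2 d^\Delta)$ where the paper takes $t=n/(\eps r^2 d^\Delta)$, and your remark about independence of the $h_{i,s}$ is true but unnecessary since a union bound needs no independence).
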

\begin{proof}
Fix $i\in [c]$ and $s\in [k_i]$. We first observe that
\[
\frac{2 v(F_{i,s})^2}{d_{i,s}^\Delta m_{i,s}} + \frac{n}{\eps r^2 d^\Delta}
\quad\le\quad \frac{4 (2n/r)^2}{\frac{9}{10}d^\Delta\eps n} + \frac{n}{\eps r^2 d^\Delta}
\quad\le\quad \frac{20n}{\eps r^2 d^\Delta} \;.
\]
Hence Lemma~\ref{lem:VC}\ref{it:lemVCc} with $t=\frac{n}{\eps r^2 d^\Delta}$ gives that  
\begin{equation*}\begin{split}
\Prob\left[|\VC_{i,s}|\ge \frac{20n}{\epsilon r^2d^\Delta}\right]
&\le \Prob\left[|\VC_{i,s}|\ge\frac{2v(F_{i,s})^2}{d_{i,s}^\Delta m_{i,s}}+\frac{n}{\epsilon r^2d^\Delta}\right]\\
&\le 2\exp\left(-\frac {n^2}{2\epsilon^2r^4d^{2\Delta}(\Delta+1)^2n_{i,s}}\right)
\le 2\exp\left(-\frac{n}{4\eps^2 r^3 d^{2\Delta}(\Delta+1)^2}\right)\;.
\end{split}\end{equation*}
Using a union bound over all choices $(i,s)$, we get the statement of the lemma.
\end{proof}

Recall that $\EC_{i,s}$ contains all the vertices of~$F_{i,s}$ that are
contained in an edge collision. 
We define \marginpar{$\EC^*_{i,s}$}$\EC^*_{i,s}=\{xy\in E(F_{i,s})\colon xy \text{ is colliding}\}$.
Notice that $|\EC_{i,s}|\le2|\EC^*_{i,s}|$.

\begin{lemma}
\label{lem:colliding-edge}
Let $xy\in E(F_{i,s})$ be an edge with $x\in\primary_{i,s}$ and
$y\in\secondary_{i,s}$. 
Let $z\in V(F_{i,s})\setminus\{y\}$ and $v\in V(G_{i,s})$.
Then we have
\begin{equation*}
  \Prob\left[xy \in \EC^*_{i,s}\;|\; h(z)=v\right]\le \frac{61\Delta }{\epsilon^2rd^{\Delta}}\;
  \quad\text{and}\quad
  \Prob\left[y \in \EC_{i,s} \;|\; h(z)=v\right]\le \frac{61\Delta^2}{\epsilon^2 rd^{\Delta}}\;,
\end{equation*}
and hence also $\Prob\left[xy \in \EC^*_{i,s}\right]\le\frac{61\Delta}{\epsilon^2 rd^{\Delta}}$.
\end{lemma}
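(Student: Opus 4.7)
The plan is to prove the first (conditional) bound on $\Prob[xy\in\EC^*_{i,s}\mid h(z)=v]$; the third (unconditional) bound then follows by averaging over $v$, while the bound on $\Prob[y\in\EC_{i,s}\mid h(z)=v]$ is obtained by a union bound over the at most $\Delta$ $F_{i,s}$-neighbours of $y$, contributing the extra factor $\Delta$ in the target.

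Writing the event $xy\in\EC^*_{i,s}$ as a disjunction over candidate colliding edges and taking a union bound yields
\[
  \Prob[xy\in\EC^*_{i,s}\mid h(z)=v]\ \le\ \sum_{(i',s')\neq(i,s)}\ \sum_{x'y'\in E(F_{i',s'})} \Prob\big[\{h_{i,s}(x),h_{i,s}(y)\}=\{h_{i',s'}(x'),h_{i',s'}(y')\}\mid h(z)=v\big].
\]
The key observation is that $h_{i,s}$ and $h_{i',s'}$ are limping homomorphisms built on disjoint random variables, and since $z\in V(F_{i,s})$ the conditioning event $h(z)=v$ depends only on the randomness of $h_{i,s}$. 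Thus $h_{i',s'}$ remains independent of $h(z)=v$, and $h_{i,s}$ and $h_{i',s'}$ are conditionally independent given $h(z)=v$. For each fixed $(i',s',x',y')$ I expand the conditional collision probability as
\[
  \sum_{\{u,w\}} \Prob\bigl[\{h_{i,s}(x),h_{i,s}(y)\}=\{u,w\}\mid h(z)=v\bigr]\cdot \Prob\bigl[\{h_{i',s'}(x'),h_{i',s'}(y')\}=\{u,w\}\bigr].
\]
Applying Lemma~\ref{lem:probh(x)=v}\ref{it:probedge} to $h_{i',s'}$ for both orderings of $\{u,w\}$ (and using that $\alphaA$ is small enough that the correction factor $(1\pm\alphaA(2/d)^\Delta)^{\Delta+2}$ is negligibly close to $1$), the second factor is at most $(2+o(1))/(d\,m_{i',s'}^2)$ whenever $\{u,w\}$ is an edge of $G$, and zero otherwise. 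Pulling this uniform bound out of the sum and noting $\sum_{\{u,w\}}\Prob[\{h_{i,s}(x),h_{i,s}(y)\}=\{u,w\}\mid h(z)=v]\le 1$, the conditional collision probability for each fixed $x'y'$ is at most $(2+o(1))/(d\,m_{i',s'}^2)$.

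To finish, I sum over $(i',s')\neq(i,s)$ and $x'y'\in E(F_{i',s'})$. Using $m_{i',s'}\ge(1-\alphaA)\eps n$ together with $\sum_{i',s'}e(F_{i',s'})\le\sum_{i',s'}v(F_{i',s'})\le(1+\alphaA)\sum_i k_i n_i\le 3n^2/r$ (from $\sum_i k_i\le 2n$ and $n_i\le n/r$), the resulting sum is at most an absolute constant times $1/(d\eps^2 r)$, comfortably below the target $61\Delta/(\eps^2 r d^\Delta)$ after applying $1/d\le \Delta/d^\Delta$ (which holds because $d\le 1$ and $\Delta\ge 1$). The main subtlety I expect is precisely this independence argument: without the observation that conditioning on $h(z)=v$ leaves $h_{i',s'}$ unchanged, one would have to use a weaker estimate for $\Prob[\{h_{i,s}(x),h_{i,s}(y)\}=\{u,w\}\mid h(z)=v]$ (for instance via Lemma~\ref{lem:AxAy} divided by a lower bound on $\Prob[h(z)=v]$), which loses a factor of $m$ and breaks the required bound after summing.
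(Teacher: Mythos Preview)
Your proof is correct and takes a genuinely different route from the paper's. The paper argues by exposing everything except~$y$: writing $u=h(x)$, it defines the ``dangerous'' neighbourhood $\tilde N(u)=\{w:\exists\,x'y'\neq xy\text{ with }h(x'y')=uw\}$, bounds $|\tilde N(u)|\le 30\Delta n/(\eps r)$ with high probability via Lemma~\ref{lem:ImageDegree} (the concentration bound on $\Delta(H)$), and then observes that when $y$ is not skipped it is placed uniformly in a common neighbourhood of size at least $\tfrac12 d^\Delta\eps n$, so the chance it lands in $\tilde N(u)$ is at most $60\Delta/(\eps^2 r d^\Delta)$. Your argument is instead a direct first-moment computation: a union bound over candidate colliding edges $x'y'$ in other forests, combined with the observation that conditioning on $h(z)=v$ (with $z\in V(F_{i,s})$) leaves $h_{i',s'}$ untouched, so that the single-edge estimate of Lemma~\ref{lem:probh(x)=v}\ref{it:probedge} applies cleanly to the second factor. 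Your approach is more elementary---it bypasses the concentration input Lemma~\ref{lem:ImageDegree} altogether---and in fact delivers the stronger bound $O(1/(d\eps^2 r))$, which you then relax to the target $61\Delta/(\eps^2 r d^\Delta)$. The paper's route, by contrast, reuses the already-established global control on $\Delta(H)$ and is the natural argument if one thinks in terms of the image graph $H$; it also explains where the $d^\Delta$ in the denominator of the stated bound comes from, whereas in your computation it is artificial.
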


\begin{proof}
Let $u= h(x)$ and~$z$ be an arbitrary vertex
in $F_{i,s}-y$. Let
$\{u_1,\ldots,u_p\}=h(\neighbor_{F_{i,s}}(y)\setminus \{x\})$.
We denote by $\mathcal{B}$ the event that $\{ u,u_1,\ldots,u_p\}$ forms an $\alphaA$-bad set. 
First observe that $p_1=\Prob[xy\in\EC^*_{i,s} | h(z)=v \text{ and } \mathcal{B} ] =0$, because the
event $\mathcal{B}$ implies that $y$ is skipped and thus the edge $xy$
is not colliding. On the other hand, if $\mathcal{B}$ does not occur, then 
\begin{equation}
\label{eq:UsetNotBad}
\left|\neighbor_{G_{i,s}}(u,u_1,\dots,u_p)\right| \ge (1-\alphaA)(d-\alphaA)^\Delta m_{i,s} \ge \frac12 d^\Delta \eps n .
\end{equation}
Next we define 
\[
\tilde \neighbor(u)=\left\{w\in \neighbor_G(u)\:\colon \exists i\in [c]\;\exists s\in [k_i]\;\exists x'y'\in E(F_{i,s}) \text{ with } xy\neq x'y' \text{ and } h(x'y')=uw \right\}\;.
\]
This means that the edge~$xy$ is colliding only if~$y$ is mapped to $\tilde \neighbor(u)$.
%
%Embed first $F_{i,s}-y$ according to $\mathcal H$. We can assume
%that $y$ is not skipped, otherwise $xy$ is not colliding and we are done. 
%
%so the edge~$xy$ will only be colliding if~$y$ is mapped to $\tilde \neighbor(u)$. 
By Lemma~\ref{lem:ImageDegree} we have 
\[
p_2=\Prob\Big[|\tilde \neighbor(u)|> \frac{30\Delta n}{\epsilon r} \;\Big|\; h(z)=v\Big] \le 2\exp(-\sqrt{n}) .
\]
%At this point we view the random embedding experiment in such a way that we imagine that all vertices of all forests $F_{i,s}$ except for $y$ have been embedded already. In particular, we now know whether $\mathcal{B}$ occurs and what the size of 
%$\tilde\neighbor(v)$ is. When embedding $y$ 
Moreover, because $z\neq y$ we have
\begin{align*}
p_3=\Prob\left[xy \in \EC^*_{i,s}\;\Big|\; 
h(z)=v \text{ and } \mathcal{\overline{B}}\text{ and } |\tilde \neighbor(u)|\le \frac{30\Delta n}{\epsilon r}\right]
%&\le \frac {|\tilde N(v)|}{|\neighbor_G(h(x),h(z_1),\ldots,h(z_p))|}\\
&\leByRef{eq:UsetNotBad} \frac{30\Delta n}{\epsilon r\cdot d^\Delta\epsilon n/2}= \frac{60\Delta }{\epsilon^2 rd^{\Delta}}\;.
\end{align*}
Since $\Prob[xy\in \EC^*_{i,s} \;|\; h(z)=v]\le p_1+p_2+p_3$,
we obtain that $\Prob[xy\in \EC^*_{i,s} \;|\;h(z)=v]
% \le \frac{60\Delta }{\epsilon^2 rd^{\Delta}}+2\exp(-\sqrt{n})
\le \frac{61\Delta }{\epsilon^2 rd^{\Delta}}$.
In addition,
\begin{equation*}
\Prob\left[y \in \EC_{i,s}\;|\;h(z)=v\right]
\le\sum_{x\in \neighbor_{F_{i,s}}(y)}\Prob[xy\in \EC^*_{i,s} \;|\;h(z)=v]
\le\Delta\frac{61\Delta }{\epsilon^2 rd^{\Delta}}\;.
\qedhere
\end{equation*}
\end{proof}

\begin{lemma}
\label{lem:edgecollisions}
We have
\[\Prob\left[\exists i\in [c], s\in [k_i]\colon |\EC_{i,s}|>\frac{300 \Delta n}{\epsilon^2r^2d^{\Delta}} \right]\le
\exp(-\sqrt{n})\;.
\]
\end{lemma}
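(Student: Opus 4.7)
The plan is to fix $(i,s)\in[c]\times[k_i]$, work with the edge-level quantity $|\EC^*_{i,s}|$ rather than $|\EC_{i,s}|$ directly (and use $|\EC_{i,s}|\le 2|\EC^*_{i,s}|$ at the end), and apply Talagrand's Inequality (Lemma~\ref{lem:Talagrand}) on the product space $\Omega=\prod_{(j,t)}\Omega_{F_{j,t}}$ that governs the joint limping homomorphism $h=\bigcup_{j,t}h_{j,t}$. First, the unconditional bound $\Prob[xy\in\EC^*_{i,s}]\le 61\Delta/(\eps^2 r d^\Delta)$ from Lemma~\ref{lem:colliding-edge}, combined with $e(F_{i,s})\le v(F_{i,s})\le 2n/r$, gives
\[
\Exp[|\EC^*_{i,s}|]\le e(F_{i,s})\cdot\frac{61\Delta}{\eps^2 r d^\Delta}\le\frac{122\Delta n}{\eps^2 r^2 d^\Delta}\,.
\]
So it suffices to prove $|\EC^*_{i,s}|\le 150\Delta n/(\eps^2 r^2 d^\Delta)$ (half the target) with failure probability $o(1/n)$, and then union-bound over the at most $2n$ pairs $(i,s)$.

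I will check the two hypotheses of Lemma~\ref{lem:Talagrand} with constants $C=O(\Delta^2)$ and $c=O(\Delta)$. A single coordinate of $\Omega$ is either the placement of one primary vertex~$x$ or the $\tau$-value of one secondary vertex~$y$. Changing $\tau(y)$ only shifts $h(y)$ itself and hence modifies the image of at most $\Delta$ edges. Changing the placement of a primary~$x$ in a forest~$F_{j,t}$ can shift the image of~$x$ together with the images of its $\le\Delta$ secondary neighbours (their common-neighbourhood targets shift with $h(x)$), so the images of at most $(\Delta+1)\Delta$ edges of~$F_{j,t}$ change. If $(j,t)=(i,s)$ these are directly edges whose $\EC^*_{i,s}$-status can flip; if $(j,t)\neq(i,s)$ each of those edges was colliding with at most one edge of~$F_{i,s}$ before the change and may collide with at most one other afterwards, so again only $O(\Delta^2)$ terms of $|\EC^*_{i,s}|$ are affected. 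Thus $C=2(\Delta+1)\Delta$ works.

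For the certificate property, suppose $|\EC^*_{i,s}|\ge\Lambda$, witnessed by $\Lambda$ pairs $(xy,x'y')$ with $xy\in E(F_{i,s})$, $x'y'\in E(F_{i',s'})$, and $h(xy)=h(x'y')$. Each such collision is certified by pinning down the four images $h(x),h(y),h(x'),h(y')$; each primary vertex requires only its own coordinate and each secondary vertex requires its $\tau$-value together with the coordinates of its $\le\Delta$ primary neighbours. Hence each collision is certified by at most $4(\Delta+1)$ coordinates and $c=4(\Delta+1)$ suffices. Applying Lemma~\ref{lem:Talagrand} with $t=28\Delta n/(\eps^2 r^2 d^\Delta)$ and using $d>\eps$ (from~\ref{itm:induction:G}$_j$) gives
\[
\Prob\big[|\EC^*_{i,s}|>\tfrac{150\Delta n}{\eps^2 r^2 d^\Delta}\big]\le\exp\!\Big(\!-\frac{t^2}{2cC^2(\Exp[|\EC^*_{i,s}|]+t)}\Big)\le\exp(-c'n)
\]
for some constant $c'=c'(\eps,\Delta,r)>0$, and a union bound over the $\le 2n$ pairs $(i,s)$ yields the claimed $\exp(-\sqrt n)$ bound.

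The part I expect to require the most care is the Lipschitz analysis under a change of a primary coordinate: the coordinated shift of a vertex and all its secondary neighbours is exactly the $\le 2$-distance dependency that motivated the limping construction, and one has to argue cleanly that even with this coordinated propagation only $O(\Delta^2)$ terms of the sum defining $|\EC^*_{i,s}|$ can flip. Everything else is a short expectation computation, a counting argument, and a direct application of Talagrand.
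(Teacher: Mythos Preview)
Your expectation bound and your certificate argument are both fine, but the Lipschitz claim fails, and this breaks the Talagrand application on the full product space. You assert that if a coordinate in some $F_{j,t}$ with $(j,t)\neq(i,s)$ is altered, each of the $O(\Delta^2)$ displaced edges ``was colliding with at most one edge of $F_{i,s}$ before the change and may collide with at most one other afterwards''. That is not true: an edge $e'$ of $F_{j,t}$ with image $uv$ is a witness for \emph{every} edge of $F_{i,s}$ whose image is $uv$, and nothing in the definition of a limping homomorphism prevents many edges of $F_{i,s}$ from landing on the same $uv$ (e.g.\ send all primary vertices of a path to $u$ and choose all $\tau$-values so that every secondary vertex lands on the same $v\in\neighbor_G(u)$). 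If $e'$ happens to be the \emph{only} witness, then moving $e'$ flips the status of all of those edges simultaneously, so $|\EC^*_{i,s}|$ can change by as much as $e(F_{i,s})$. Hence $|\EC^*_{i,s}|$ is not $O(\Delta^2)$-Lipschitz on $\prod_{(j,t)}\Omega_{F_{j,t}}$, and Lemma~\ref{lem:Talagrand} does not apply with your constants.

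The paper avoids this by using McDiarmid with only $k\le 2n/r$ coordinates, namely those of $\Omega_{F_{i,s}}$; implicitly one first conditions on $(h_{j,t})_{(j,t)\neq(i,s)}$. Once the other forests are frozen, $|\EC_{i,s}|$ really is $4\Delta^2$-Lipschitz: changing one coordinate of $F_{i,s}$ moves at most $\Delta^2$ edges of $F_{i,s}$, and each such edge can only toggle its \emph{own} membership in $\EC^*_{i,s}$ (edges of the same forest never witness one another). The conditional expectation is bounded uniformly over the conditioning because $|E(H')|\le\sum_{(j,t)}e(F_{j,t})\le 4n^2/r$ holds deterministically, so $\sum_{uv\in E(H')}\Prob[h_{i,s}(xy)=uv]=O(1/(d\eps^2 r))$ for every fixed $H'$, recovering the same $O(\Delta n/(\eps^2 r^2 d^\Delta))$ bound. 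You correctly flagged the cross-forest Lipschitz step as the delicate one; the fix is not a sharper constant but to remove those coordinates from the concentration step altogether.
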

\begin{proof}
Fix an arbitrary $i\in [c]$ and an arbitrary $s\in [k_i]$. Combining Lemma~\ref{lem:colliding-edge} and~\eqref{eq:BoundsOnVFis}, we get $\Exp[|\EC_{i,s}|]\le 2\Exp[|\EC^*_{i,s}|]\le 
\frac{244\Delta n}{\epsilon^2 r^2d^{\Delta}}$. 

Changing the value at a single coordinate in~\eqref{eq:productspace} leads to a change of the placement of at most $\Delta^2$ edges. A change of a placement of a single edge can change the number of colliding edges by at most~2, which can result in a change of at most~4 in $|\EC_{i,s}|$. We conclude that $|\EC_{i,s}|$ is $4\Delta^2$-Lipschitz. 

McDiarmid's Inequality (Lemma~\ref{lem:McDiarmid}) gives that
$$\Prob\left[|\EC_{i,s}|\ge \frac{300\Delta n}{\epsilon^2 r^2d^{\Delta}}\right]\le 2\exp\left(-\frac{2(\tfrac{56\Delta n}{\epsilon^2 r^2d^{\Delta}})^2}{16\Delta^4\cdot\frac{2n}{r}}\right)\le\exp(-n^{0.9})\;.$$
The lemma then follows by a union bound over $i$ and $s$.
\end{proof}

\begin{lemma}
\label{lem:NCN}
We have
\[\Prob\left[\exists v\in V\colon |\FN(v)|>\frac
{10^4\Delta^3n}{\epsilon^3r^2d^{2\Delta}}\right]\le \exp(-\sqrt{n})\;.
\]
\end{lemma}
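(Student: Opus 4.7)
To prove Lemma~\ref{lem:NCN}, we fix a vertex $v\in V$ and aim to show $\Prob[|\FN(v)|>T]\le \exp(-\sqrt n)/m$ for $T=\frac{10^4\Delta^3 n}{\eps^3 r^2 d^{2\Delta}}$; a union bound over the at most $(1+\eps)n$ vertices of~$V$ then gives the claim. The argument splits into an expectation bound and a concentration step.

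For the expectation, we use the estimate
\[
|\FN(v)|\le \sum_{(i,s)}\sum_{xy\in E(F_{i,s})}\bigl(\mathbf{1}[h(x)=v\wedge y\in \VC_{i,s}]+\mathbf{1}[h(x)=v\wedge y\in \EC_{i,s}]\bigr),
\]
where the edge sum is over ordered pairs, so each edge is counted twice. For each term we condition on $h(x)=v$: Lemma~\ref{lem:probh(x)=v} gives $\Prob[h(x)=v]=O(1/(\eps n))$, Lemma~\ref{lem:VC} bounds $\Prob[y\in \VC_{i,s}\mid h(x)=v]$ by $O(1/(\eps r d^\Delta))$ (using parts~(a) or~(b) depending on whether $y$ is primary or secondary), and Lemma~\ref{lem:colliding-edge} bounds $\Prob[y\in \EC_{i,s}\mid h(x)=v]$ by $O(\Delta^2/(\eps^2 r d^\Delta))$. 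Since each forest has $O(n/r)$ edges and there are at most $2n$ forests in the family, summing yields $\Exp[|\FN(v)|]=O\bigl(\Delta^3 n/(\eps^3 r^2 d^\Delta)\bigr)$, smaller than~$T$ by a factor of~$d^\Delta$.

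For the concentration, the plan is to apply Talagrand's Inequality (Lemma~\ref{lem:Talagrand}) to the function $|\FN(v)|$ on the product probability space $\Omega_F$. A certificate that $|\FN(v)|\ge \Lambda$ consists of $\Lambda$ witness vertices $x$, each verifiable by a bounded number of coordinates (those placing $x$, a faulty neighbour $y$, and a collision partner for $y$—one extra vertex for a vertex-collision, or the two endpoints of a colliding edge in another tree for an edge-collision), so the certificate constant is $c=O(1)$. We restrict to the typical event $\mathcal{G}$ provided by Lemma~\ref{lem:DPandDSaresmall}, on which $\max_u(D_P(u)+D_S(u))\le 15n/(\eps r)$, and which fails with probability at most $\exp(-\sqrt n)$—already enough for the tail outside~$\mathcal{G}$. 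On~$\mathcal{G}$ a single-coordinate change moves at most $\Delta+1$ vertex images and at most $\Delta^2$ edge images, which in turn alter the faulty status of only a controlled number of vertices (obtained by separately bookkeeping vertex-collisions in the affected tree and edge-collisions in all trees via the moved edges' old and new positions), delivering a workable Lipschitz bound; combined with $\Exp[|\FN(v)|]+t=O(T)$ this supplies the required per-vertex tail bound via Talagrand.

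The main obstacle is this Lipschitz analysis: because edge collisions couple the different trees, moving a single edge in one tree can toggle collision status for edges in every other tree that happens to lie on the moved positions, so the per-coordinate variation of $|\FN(v)|$ scales with the maximum codegree rather than with an absolute constant. Restricting to the typical event of Lemma~\ref{lem:DPandDSaresmall} is essential to bound this cascade, and the Talagrand bound (rather than McDiarmid's) must be used to profit from the smallness of $\Exp[|\FN(v)|]+t$ in closing the constant-factor gap of order $\Delta\cdot d^{-\Delta}$ between $\Exp[|\FN(v)|]$ and the target~$T$.
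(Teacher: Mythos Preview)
Your overall architecture---bound the expectation, then apply Talagrand's Inequality---is exactly what the paper does. The difference is that your ``main obstacle'' is not an obstacle at all, and the workaround you propose (restricting to the typical event $\mathcal G$) is both unnecessary and technically unsound.

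Here is the point you are missing. When a single edge $e$ moves away from a host-graph position $uv$, it can toggle the collision status of at most \emph{one} other edge at that position: if two or more edges from other forests were already sitting at $uv$, they continue to collide with one another after $e$ leaves, so nothing changes for them; only when exactly one other edge was present does it lose its collision partner. The same holds at the new position. Consequently, changing one coordinate of the product space moves at most $\Delta+1$ vertices and hence at most $\Delta^2$ forest edges, and each moved edge flips the collision status of $O(1)$ other edges. The number of vertices whose faulty status changes is therefore $O(\Delta^2)$, and since each such vertex has at most $\Delta$ neighbours that might contribute to $\FN(v)$, the function $|\FN(v)|$ is $O(\Delta^3)$-Lipschitz with an \emph{absolute} constant depending only on $\Delta$. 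No cascade through ``every other tree at the moved positions'' occurs, and no conditioning on $\mathcal G$ is needed. (Indeed, conditioning on $\mathcal G$ would destroy the product structure Talagrand requires, so your plan as written does not go through.)

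A second, smaller gap: your expectation bound invokes Lemma~\ref{lem:colliding-edge} to control $\Prob[y\in\EC_{i,s}\mid h(x)=v]$ uniformly, but that lemma only lets you condition on a vertex $z\neq y$ where $y$ is the \emph{secondary} endpoint of the edge in question. When $x$ is secondary and $y$ is primary, the edge $xy$ itself falls outside the lemma's scope (you would need to condition on the secondary endpoint of $xy$, which is $x$ itself). The paper handles this case separately by fixing all forests except $F_{i,s}$, invoking Lemma~\ref{lem:ImageDegree} to bound $\Delta(H')$, and then computing $\Prob[h(y)\in\neighbor_{H'}(v)\mid h(x)=v]$ directly via Lemma~\ref{lem:probh(x)=v}\ref{it:probedge} and~\ref{it:prob-sec}.
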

\begin{proof}
Fix a vertex $v\in V$. Fix $i\in [c]$ and $s\in [k_i]$.
\begin{claim}\label{cl:bad}
  Let $xy\in E(F_{i,s})$. Then $\Prob[h(x)=v \text{ and } y\text{ is faulty}]\le
\frac{10^3\Delta^2}{\epsilon^3rd^{2\Delta}n}$.
\end{claim}
\begin{claimproof}[Proof of Claim~\ref{cl:bad}] 
We shall use
\begin{align}\label{eq:bad:cond}
  \Prob[h(x)=v \text{ and } y\text{ is faulty}]=\Prob[h(x)=v]\cdot \Prob[y\text{ is faulty}\:|\: h(x)=v]\;.
\end{align}
First consider the case that~$x$ is primary.  The secondary vertex~$y$ is
faulty if it is colliding, or if it is in an edge collision (due to either
$xy$ colliding, or $yz$ colliding with $z\in
\neighbor_{F_{i,s}}(y)\setminus \{x\}$). 
For vertex collisions, by Lemma~\ref{lem:VC}\ref{it:lemVCb},
we have
\begin{equation*} % \label{eq:badVC}
  \Prob[y\in \VC_{i,s}\:|\: h(x)=v] 
  \le \frac{2n_{i,s}}{d^\Delta m_{i,s}}
  \leBy{\eqref{eq:lowerboundMIS},\eqref{eq:BoundsOnVFis}}\frac{8}{ \epsilon rd^\Delta}\;.
\end{equation*}
For edge collisions, Lemma~\ref{lem:colliding-edge} gives 
% \begin{equation*} % \label{eq:badEC}
$\Prob[y\in \EC_{i,s}\:|\: h(x)=v]\le \frac{61\Delta^2}{\epsilon^2rd^{\Delta}}$.
% \end{equation*}
Hence 
\begin{equation*}
    \Prob[y\text{ is faulty}\:|\: h(x)=v]
    \le  \frac{8}{ \epsilon rd^\Delta} + \frac{61\Delta^2}{\epsilon^2rd^{\Delta}}\
    \le \frac{70\Delta^2}{\epsilon^2rd^{\Delta}}\;.
\end{equation*}
Since $\Prob[h(x)=v]=\frac {1}{m_{i,s}}\le \frac 2{\epsilon n}$
by Lemma~\ref{lem:probh(x)=v}\ref{it:prob-prim}, we get together
with~\eqref{eq:bad:cond} that
\begin{equation*}
  \Prob[h(x)=v \text{ and } y\text{ is faulty}] \le \frac{140\Delta^2}{\epsilon^3rd^{\Delta}n}\;,
\end{equation*}
which gives the claim in this case.

\smallskip 
Next, consider the case that~$x$ is secondary.  We denote by
$\mathcal A_1$ the event that $y$ is in a vertex collision. We denote by
$\mathcal A_2$ the event that $y$ together with some vertex
$z\in\neighbor(y)\setminus \{x\}$ forms a colliding edge. We denote by
$\mathcal A_3$ the event that $xy$ is colliding. The primary vertex $y$ is
faulty if at least one of the events $\mathcal A_1$, $\mathcal A_2$, or
$\mathcal A_3$ occurs.

By Lemma~\ref{lem:VC}\ref{it:lemVCa} we have
\begin{equation}\label{eq:A1}
\Prob[\mathcal A_1\:|\:h(x)=v]
\le\frac{2n_{i,s}}{d^\Delta\frac12m_{i,s}}
\le\frac{16}{rd^\Delta\eps}\,.
\end{equation}
For $z\in\neighbor(y)\setminus\{x\}$ fixed, Lemma~\ref{lem:colliding-edge} gives
$\Prob[yz\in \EC_{i,s}\:|\: h(x)=v]\le
\frac{61\Delta}{\epsilon^2rd^{\Delta}}$. Hence we obtain
\begin{equation}\label{eq:A2}
\Prob[\mathcal A_2\:|\:h(x)=v]
\le\Delta\frac{61\Delta}{\epsilon^2rd^{\Delta}}
\,.
\end{equation}
In order to obtain a similar bound for the event~$\mathcal A_3$, let
$\mathcal A'_3$ be the event that $xy$ is in an edge collision with an edge
from a different forest $F_{i',s'}$. Let~$H'$ be the graph formed by the
images of all forests but $F_{i,s}$, that is, $H-E(h_{i,s})$. Now fix a
mapping of all forests but~$F_{i,s}$. By Lemma~\ref{lem:ImageDegree},
with probability at least $1-2\exp(-\sqrt n)$ we have
$\Delta(H')\le\frac{30\Delta n}{\eps r}$ (and this is independent of the
event $h(x)=v$). 
Assume that this is the case and let $\Prob_{i,s,H'}$ be (the measure
on) the conditional
probability space associated with the limping homomorphism for~$F_{i,s}$.
We have,
\begin{equation*}\begin{split}
  \Prob_{i,s,H'}[\mathcal A'_3\;|\;h(x)=v] &=
  \Prob_{i,s,H'}[h(y)\in\neighbor_{H'}(v)\;|\;h(x)=v] \\
  &\le\sum_{u\in\neighbor_{H'}(v)} \Prob_{i,s,H'}[h(y)=u\;|\;h(x)=v]\,.
\end{split}\end{equation*}
For a fixed vertex $u\in V$ we have
\[\Prob_{i,s,H'}[h(y)=u\;|\;h(x)=v]=\Prob_{i,s,H'}[h(y)=u\text{
and }h(x)=v]/\Prob_{i,s,H'}[h(x)=v]\,,\] which by
Lemma~\ref{lem:probh(x)=v}\ref{it:probedge}
and~Lemma~\ref{lem:probh(x)=v}\ref{it:prob-sec} is at most
$(\frac2{dm^2_{i,s}})/\frac{1}{2m_{i,s}}=\frac4{dm_{i,s}}$. Hence,
\[\Prob_{i,s,H'}[\mathcal A'_3\;|\;h(x)=v]\le\frac{30\Delta n}{\eps
  r}\cdot\frac4{dm_{i,s}}\le\frac{120\Delta}{\eps^2dr}\,.\] Returning to our
original probability space we thus obtain $\Prob[\mathcal
A'_3\;|\;h(x)=v]\le 2\exp(-\sqrt n)+\frac{120\Delta}{\eps^2dr}$.
Since
\[\Prob[\mathcal A_3\;|\; h(x)=v]\le\Prob[\mathcal A'_3\;|\;
h(x)=v]+\Prob[y\in \VC_{i,s}\;|\; h(x)=v]\;,\] we conclude from
Lemma~\ref{lem:VC}\ref{it:lemVCa} 
that 
\begin{equation}\label{eq:A3}
  \Prob[\mathcal A_3\;|\; h(x)=v]
  \le \frac{121\Delta}{\eps^2dr} + \frac{2n_{i,s}}{d^\Delta\frac12m_{i,s}}
  \le \frac{121\Delta}{\eps^2dr} + \frac{16}{rd^\Delta\eps}
  \le \frac{150\Delta}{\eps^2rd^{\Delta}}\,.
\end{equation}

Finally, since $\Prob[h(x)=v]\le\frac {2}{m_{i,s}}\le \frac 4{\epsilon n}$
by Lemma~\ref{lem:probh(x)=v}\ref{it:prob-sec}, we get
from~\eqref{eq:bad:cond}, \eqref{eq:A1}, \eqref{eq:A2} and~\eqref{eq:A3}
that
\begin{equation*}
  \Prob[h(x)=v \text{ and } y\text{ is faulty}] \le 
  \frac{4}{\eps n} \Big( 
  \frac{16}{\eps rd^\Delta}
  +\frac{61\Delta^2}{\epsilon^2rd^{\Delta}}
  +\frac{150\Delta}{\eps^2rd^\Delta}
  \Big)
  \le \frac{4}{\eps n} \cdot \frac{200\Delta^2}{\eps^2 r d^\Delta}
  \;,
\end{equation*}
which also gives the claim in this case.
\end{claimproof}

For $x\in V(F_{i,s})$ let $\mathcal E_{x,v}$ be the event that $h(x)=v$
and that there exists a vertex $y\in \neighbor_{F_{i,s}}(x)$ such that $y$ is faulty.

\begin{claim}\label{cl:bad:vertex}
  For each $x\in V(F_{i,s})$ we have 
  $\Prob[\mathcal E_{x,v}]\le \frac{10^3\Delta^3}{\epsilon^3rd^{2\Delta}n}$.
\end{claim}
\begin{claimproof}[Proof of Claim~\ref{cl:bad:vertex}]
  This follows immediately from $\Delta(F_{i,s})\le\Delta$ and Claim~\ref{cl:bad}.
\end{claimproof}

We now return to the proof of Lemma~\ref{lem:NCN} and recall that
$\FN(v)=\bigcup_{i,s}\{x\in V(F_{i,s})\colon \mathcal E_{x,v}\}$. Thus, combining the above claim and e.g.~\eqref{eq:upperCL}, we get
\begin{equation}\label{eq:FNexp}
\Exp[|\FN(v)|]\le \frac{10^3\Delta^3}{\epsilon^3rd^{2\Delta}n}\cdot\frac{4n^2}r\;.
\end{equation}
Next, we argue that $|\FN(v)|$ is $4(\Delta+1)^2$-Lipschitz. To see this, we need to control the effects a change of a single variable in~\eqref{eq:productspace} may have \emph{(a)} on the number of vertices that are mapped to $v$, and \emph{(b)} on the number of vertices that are faulty.  Observe that a change of a single vertex being faulty or not may lead to a change up to~$\Delta$ in the value $|\FN(v)|$. 
\begin{enumerate}
\item[\emph{(a)}] A change of a single variable in~\eqref{eq:productspace} can alter the position of at most $\Delta+1$ vertices.
\item[\emph{(b)}] A change in the position of a single vertex can alter the total number of faulty vertices by at most~$4$. By \emph{(a)}, we get that a change of a single variable in~\eqref{eq:productspace} can alter the number of faulty vertices by at most $4(\Delta+1)$.
\end{enumerate}

Thus by \emph{(a)} and \emph{(b)} we get that $|\FN(v)|$ is $(\Delta+1)+\Delta\cdot (4(\Delta+1))$-Lipschitz.

Next, fix~$\Lambda\in \mathbb N$ and assume that $|\FN(v)|\ge \Lambda$ for a particular realization in~\eqref{eq:productspace}. We claim that there is a set of at most $\Lambda\cdot 3(\Delta+1)$ coordinates that certifies that $|\FN(v)|\ge \Lambda$. Indeed, each elementary contribution to $|\FN(v)|$ corresponds to some vertex~$x$ mapped to $v$ whose neighbour is faulty. 
To certify that a vertex is faulty, we need to encode its position (or the position of the colliding edge incident to this vertex) and the position of the vertex (edge) with which it collides. To encode the position of a secondary vertex~$y$, we need to know $\Delta+1$ coordinates from~\eqref{eq:productspace}. These coordinates also give the position of any primary vertex that may be incident to any colliding edge containing~$y$. So we need at most $2(\Delta+1)$ coordinates to certify that a secondary vertex is faulty. The number of coordinates needed to certify that a primary vertex is faulty is also bounded by $2(\Delta+1)$. 
So, to increase $|\FN(v)|$ by one, we need at most $(\Delta+1)$ coordinates
to certify the position of~$x$ and at most $2(\Delta+1)$ coordinates to certify that~$x$ has a faulty neighbour. The claim follows.

Therefore $|\FN(v)|$ satisfies all the conditions of Talagrand's Inequality (Lemma~\ref{lem:Talagrand}). We get
\begin{align*}
\Prob\left[|\FN(v)|>\frac{10^4\Delta^3n}{\varepsilon^3r^2d^{2\Delta}}\right]&\le
\Prob\left[|\FN(v)|>\Exp[|\FN(v)|]+\frac{10^4\Delta^3n}{2\varepsilon^3r^2d^{2\Delta}}\right]\le \exp(-\Theta (n))\;.
\end{align*}
 The lemma follows by a union bound over all choices of $v$.

\end{proof}

\begin{lemma}
\label{lem:skippedInneighbourhood}
We have
\[
\Prob\left[\exists v\in V\colon |\YN(v)|>\frac
{\beta n}{r}\right]\le \exp(-\sqrt{n})\;.
\]
\end{lemma}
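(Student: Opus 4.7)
The plan is to fix $v\in V$, bound $\Exp[|\YN(v)|]$ using the basic probability estimates from Lemma~\ref{lem:probh(x)=v}, then verify that $|\YN(v)|$ is a bounded-Lipschitz function on the product space $\Omega_F$ from~\eqref{eq:productspace}, apply McDiarmid's Inequality, and finally take a union bound over $v\in V$.

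For the expected value I would first observe two structural facts: by construction, only secondary vertices can ever be skipped; and because secondary vertices sit at even distance $\ge 2$ from the root set $X_{i,s}$, all of their neighbours in $F_{i,s}$ are primary. Hence every $x\in\YN(v)$ lies in $\primary_{i,s}$ for some $(i,s)$ and has a secondary neighbour $y\in Y_{i,s}$. By linearity of expectation, Lemma~\ref{lem:probh(x)=v}\ref{it:prob-prim} (giving $\Prob[h(x)=v]=1/m_{i,s}\le 2/(\eps n)$ via~\eqref{eq:lowerboundMIS}), Lemma~\ref{lem:probh(x)=v}\ref{it:prob-skippedConditioned} (giving $\Prob[y\text{ skipped}\mid h(x)=v]\le\alphaA$), and the bound $\sum_{i,s}v(F_{i,s})\le 4n^2/r$ from~\eqref{eq:upperCL},
\[
\Exp[|\YN(v)|]\le \sum_{i,s}\sum_{x\in\primary_{i,s}}\sum_{y\in \neighbor_{F_{i,s}}(x)} \frac{2}{\eps n}\cdot\alphaA \le \frac{2\alphaA}{\eps n}\cdot\Delta\cdot\frac{4n^2}{r}=\frac{8\Delta\alphaA n}{\eps r}\le \frac{\beta n}{2r},
\]
using $\alphaA\ll\beta$.

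Next I would show that $|\YN(v)|$ is $(\Delta^2+1)$-Lipschitz on $\Omega_F$. The key observation is that changing a secondary coordinate $\tau(y)$ leaves $|\YN(v)|$ invariant: the skipped status of every vertex depends only on the positions of primary vertices, and $y$ itself (being secondary) can never be in $\YN(v)$. Changing a single primary coordinate $h(x')$, however, can (i) add or remove $x'$ from $\YN(v)$, and (ii) flip the skipped status of up to $\Delta$ secondary neighbours $y$ of $x'$; each such flip can, in turn, add or remove up to $\Delta-1$ other primary neighbours of $y$ from $\YN(v)$. Summing gives change at most $1+\Delta(\Delta-1)\le\Delta^2+1$.

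With these pieces in hand, apply Lemma~\ref{lem:McDiarmid} with $t=\beta n/(2r)$, $C=\Delta^2+1$, and dimension $k\le\sum_{i,s}v(F_{i,s})\le 4n^2/r$ to get
\[
\Prob\!\left[|\YN(v)|>\frac{\beta n}{r}\right]\le\Prob\!\left[|\YN(v)|>\Exp[|\YN(v)|]+t\right]\le 2\exp\!\left(-\frac{2t^2}{(\Delta^2+1)^2\cdot 4n^2/r}\right)\le 2\exp(-\Theta(n/r)).
\]
A union bound over the at most $(1+\eps)n$ choices of $v$ gives failure probability at most $\exp(-\sqrt n)$ for $n$ large. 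The only delicate step is the Lipschitz accounting, in particular recognising that secondary coordinates are invisible to $|\YN(v)|$, which is what keeps $C$ small enough to overpower the dimension factor $k\sim n^2/r$; beyond that the argument is a routine concentration-plus-union-bound, parallel to Lemma~\ref{lem:Yis_small}.
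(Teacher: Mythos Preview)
Your expectation bound and your Lipschitz analysis are both correct, but the McDiarmid step does not give what you claim. With $t=\beta n/(2r)$, $C=\Delta^2+1$ and $k\le 4n^2/r$, the exponent is
\[
\frac{2t^2}{C^2k}\;=\;\frac{2\cdot\beta^2 n^2/(4r^2)}{(\Delta^2+1)^2\cdot 4n^2/r}\;=\;\frac{\beta^2}{8(\Delta^2+1)^2 r}\,,
\]
which is a constant in $n$, not $\Theta(n/r)$. So McDiarmid only bounds the failure probability for each $v$ by a fixed constant, and the union bound over $v$ blows up. The parallel with Lemma~\ref{lem:Yis_small} is misleading: there the random variable $|Y_{i,s}|$ depends only on the coordinates of a \emph{single} forest $F_{i,s}$, so $k\le 2n/r$; here $|\YN(v)|$ depends on \emph{all} forests simultaneously, so $k\sim n^2/r$, a factor of $n$ larger. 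Your observation that secondary coordinates are invisible is nice but does not help, since $|\primary|$ can still be of order $n^2/r$.

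The paper circumvents this by exploiting independence rather than Lipschitzness. It observes that for primary vertices $x,x'$ at distance at least~$6$ in $\bigcup_{i,s}F_{i,s}$, the events $\mathcal E_{x,v}=\{h(x)=v\text{ and some neighbour of }x\text{ is skipped}\}$ are independent (since each involves only primary vertices within distance~$2$ of $x$). A proper colouring of the sixth power with $\Delta^6$ colours then writes $|\YN(v)|$ as a sum of $\Delta^6$ subsums, each of which is a sum of independent indicators and hence dominated by a binomial; Chernoff's bound~\eqref{eq:Ch3} on each subsum gives the required $\exp(-\Theta(n/r))$ tail. Alternatively, your Lipschitz and certifiability observations would feed directly into Talagrand's Inequality (Lemma~\ref{lem:Talagrand}), whose bound scales with $\Exp[f]+t\sim n/r$ rather than with $k\sim n^2/r$, and would also rescue the argument.
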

\begin{proof}
  We proceed similarly as in the proof of the previous lemma.
  Fix $v\in V$. For $x\in V(F_{i,s})$ denote by $\mathcal E_{x,v}$ the
  event that $h(x)=v$ and that there exists a vertex in 
	$\neighbor_{F_{i,s}}(x)$ that is skipped.
	By Lemma~\ref{lem:probh(x)=v}\ref{it:prob-prim} and
  Lemma~\ref{lem:probh(x)=v}\ref{it:prob-skippedConditioned} we have
  \begin{equation}\label{eq:YN:Exv}
    \Prob[\mathcal E_{x,v}]
    \le\sum_{y\in \neighbor_{F_{i,s}}(x)}\Prob[y\text{ is skipped}\:|\:h(x)=v]\cdot\Prob[h(x)=v]
    \le\Delta\cdot\alphaA\cdot \frac 2{\eps n}\,.
  \end{equation}
  Observe that $\YN(v)=\bigcup_{i,s}\{x\in V(F_{i,s})\colon\mathcal
  E_{x,v}\}$. Moreover, for $x,x'\in \bigcup_{i,s}V(F_{i,s})$ of distance
  at least~$6$ the events $\mathcal E_{x,v}$ and $\mathcal E_{x',v}$ are
  independent. Therefore we consider the $6$-th power~$F^6$ of
  $\bigcup_{i,s}F_{i,s}$. Since~$F^6$ has maximum degree less than
  $\Delta^6$ this graph has a $\Delta^6$-colouring
  $\bigcup_{i,s}V(F_{i,s})=C^1\dcup\dots\dcup C^{\Delta^6}$. For $\ell\in\Delta^6$ let $Z^\ell$ be the number of $x\in
  C^\ell$ such that $\mathcal E_{x,v}$ holds. By~\eqref{eq:YN:Exv} the random
  variable $Z^\ell$ is stochastically dominated by
  $\mathrm{Bin}(|C^\ell|,\frac{2\Delta\alphaA}{\eps n})$. Thus we can apply
  Chernoff's inequality~\eqref{eq:Ch3} with
  \begin{equation*}
    \mu=\frac{2\Delta\alphaA}{\eps n}|C^\ell|\,,
    \quad
    \delta'=1+\frac{\eps}{8\Delta^7}
    \quad\text{and}\quad
    t=\mu+\frac{\alphaA n}{r\Delta^6}\,,
  \end{equation*}
  which is possible because
  $\delta'\mu\le\mu+\frac{\eps}{8\Delta^7}\cdot\frac{2\Delta\alphaA}{\eps
    n}\cdot\frac{4 n^2}{r}=t$. We conclude that there is $\delta''>0$ such
  that
  \begin{equation*}
    \Prob\Big[ Z^\ell\ge\frac{2\Delta\alphaA}{\eps
      n}|C^\ell|+\frac{\alphaA n}{r\Delta^6}\Big]
    \le \exp\Big(-\delta''\frac{\alphaA n}{r\Delta^6}\Big)\,.
  \end{equation*}
  Hence with probability at least $1-\Delta^6\cdot
  \exp\big(-\delta''\frac{\alphaA n}{r\Delta^6}\big)$ we have
  \begin{equation*}
    |\YN(v)|
    =\sum_{\ell\in[\Delta^6]} Z^\ell
    \le\frac{2\Delta\alphaA}{\eps n}\cdot\frac{4 n^2}{r}+\Delta^6\frac{\alphaA n}{r\Delta^6}
    \le9\frac{\Delta\alphaA n}{\eps r}<\frac{\beta n}{r}\,.
  \end{equation*}
  The lemma follows by a union bound over $v\in V$.
\end{proof}

%++++++++++++++++++XN++++++++++++++++++++++
\begin{lemma}\label{lem:XN} 
We have
  \[\Prob\left[\exists v\in V\colon|\XN(v)|> \frac{\beta n}{r}\right]\le\exp(-\sqrt{n})\,.\]
\end{lemma}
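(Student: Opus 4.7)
The plan is to mimic the argument of Lemma~\ref{lem:skippedInneighbourhood}, but exploit the fact that the set of ``root-adjacent'' vertices is deterministically small: the only randomness we need to track is where such vertices get mapped. Fix $v\in V$. For each pair $(i,s)$ let
\[
N_{i,s}=\{x\in V(F_{i,s})\setminus X_{i,s}\colon \neighbor_{F_{i,s}}(x)\cap X_{i,s}\neq\emptyset\}.
\]
Since every vertex in $X_{i,s}$ has at most $\Delta$ neighbours in $F_{i,s}$, we have $|N_{i,s}|\le\Delta|X_{i,s}|\le\Delta\alpha n/r$. Because there are at most $2n$ forests, $\sum_{i,s}|N_{i,s}|\le 2\Delta\alpha n^2/r$. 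Observing that $\XN(v)=\bigcup_{i,s}\{x\in N_{i,s}\colon h(x)=v\}$, and using Lemma~\ref{lem:probh(x)=v}\ref{it:prob-prim},\ref{it:prob-sec} together with $m_{i,s}\ge\eps n/2$ from~\eqref{eq:lowerboundMIS}, we get $\Prob[h(x)=v]\le 3/(\eps n)$ for every $x\in V(F_{i,s})$. Hence
\[
\Exp\bigl[|\XN(v)|\bigr]\le\frac{2\Delta\alpha n^2}{r}\cdot\frac{3}{\eps n}=\frac{6\Delta\alpha n}{\eps r},
\]
which is far below $\beta n/r$ since $\alpha\ll\beta$.

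For concentration we use the same coloring trick as in the proof of Lemma~\ref{lem:skippedInneighbourhood}. The events $\{h(x)=v\}$ and $\{h(x')=v\}$ are independent whenever $x,x'$ lie in distinct forests, or whenever they lie in a common $F_{i,s}$ and their distance in $F_{i,s}$ is at least~$3$ (two primary vertices are always independent; a primary and a secondary vertex are independent unless they are adjacent; two secondary vertices are independent unless they share a primary neighbour, i.e.\ lie at distance~$2$). Consequently, the square $F^2$ of $F=\bigcup_{i,s}F_{i,s}$, whose maximum degree is less than $\Delta^2$, admits a proper colouring with at most $\Delta^2+1$ colours, yielding a partition $V(F)=C^1\dcup\cdots\dcup C^{\Delta^2+1}$ in which the random variables $\{[h(x)=v]\}_{x\in C^\ell}$ are mutually independent.

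Set $Z^\ell=|\{x\in C^\ell\cap N\colon h(x)=v\}|$, where $N=\bigcup_{i,s}N_{i,s}$. Then $Z^\ell$ is stochastically dominated by $\mathrm{Bin}(|C^\ell\cap N|,3/(\eps n))$, and $\Exp[Z^\ell]\le 6\Delta\alpha n/(\eps r)$. Choosing $\delta'=1+\eps/(100\Delta^3)$ and $t=\beta n/(2r(\Delta^2+1))$, one checks $\delta'\Exp[Z^\ell]\le t$ using $\alpha\ll\beta$, so Chernoff's bound~\eqref{eq:Ch3} yields a constant $\delta''>0$ with
\[
\Prob\!\left[Z^\ell\ge \frac{\beta n}{2r(\Delta^2+1)}\right]\le \exp\!\left(-\delta''\frac{\beta n}{2r(\Delta^2+1)}\right).
\]
Summing over $\ell\in[\Delta^2+1]$ and using $|\XN(v)|=\sum_\ell Z^\ell$, we obtain $\Prob[|\XN(v)|>\beta n/r]\le\exp(-\Theta(n/r))$. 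A union bound over the at most $(1+\eps)n$ choices of $v$ gives the claimed bound $\exp(-\sqrt n)$ for $n$ large enough. No step is particularly difficult here; the only mild subtlety is verifying that within each colour class $C^\ell$ the relevant events are genuinely independent, which follows from the two-stage structure of the limping homomorphism.
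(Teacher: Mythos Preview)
Your proof is correct, and the overall plan---bound the number of root-adjacent vertices, bound the probability each lands on $v$, and use a Chernoff bound after a union bound over $v$---matches the paper's. However, you overlook a simplification that the paper exploits: every vertex in your set $N_{i,s}$ is \emph{primary}. Indeed, primary vertices were defined as those at odd distance from $X_{i,s}$ in $F_{i,s}$, and every element of $N_{i,s}$ is at distance~$1$ from $X_{i,s}$. Consequently the events $\{h(x)=v\}_{x\in N}$ are mutually independent by Step~1 of the limping homomorphism, and the whole colouring apparatus (taking squares of the forest, splitting into $\Delta^2+1$ colour classes, invoking~\eqref{eq:Ch3} separately on each class) is unnecessary: a single direct application of Chernoff's bound~\eqref{eq:Ch2} to $|\XN(v)|$, which is stochastically dominated by $\mathrm{Bin}\bigl(\sum_{i,s}|N_{i,s}|,\tfrac{2}{\eps n}\bigr)$, already does the job. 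Your route works, but it imports machinery from Lemma~\ref{lem:skippedInneighbourhood} that is not needed here.
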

\begin{proof}
  Fix $v\in V$. For $i\in[c]$ and $s\in[k_i]$ let 
	$Q_{i,s}=\bigcup_{x\in X_{i,s}}\neighbor_{F_{i,s}}(x)$. 
	By definition each $y\in Q_{i,s}$ is
  primary, hence~$y$ gets mapped to~$v$ with probability at most
  $\frac{2}{\epsilon n}$ by Lemma~\ref{lem:probh(x)=v}\ref{it:prob-prim}.
  These events are independent, and thus the number of vertices in
  $\bigcup_{i,s} Q_{i,s}$ which are mapped to $v$ is stochastically
  dominated by $\mathrm{Bin}(\frac{2}{\epsilon n},\sum_{i,s}
  |Q_{i,s}|)$. We have $\sum_{i,s}|Q_{i,s}|\le \Delta
  \sum_{i,s}|X_{i,s}|\le \Delta \cdot 2n\frac{\alpha n}r$.  Thus, by
  Chernoff's inequality~\eqref{eq:Ch2} applied with $\mu=\frac{2}{\eps
    n}\cdot \frac{2\Delta\alpha n^2}{r}=\frac{4\Delta\alpha n}{\eps r}$ and
  $\delta=1$ we have
  \begin{equation*}
    \Prob\Big[|\XN(v)|>\frac{\beta n}{r}\Big]
    \le\Prob\Big[|\XN(v)|>\frac{8\Delta\alpha n}{\epsilon r}\Big]
    \le 2\exp\Big(-\frac{4\Delta\alpha n}{3\eps r}\Big)
    \;.
  \end{equation*}
The lemma follows by taking the union bound over all choices of $v$.
\end{proof}

%++++++++++++++++++QUASIRANDOMNESS++++++++++++++++++++++

We now prepare for the proof of~\ref{cond:GStillQuasirandom}.

\begin{definition}[important group]
\label{def:importantgroup}
We say that $i\in [c]$ is an \emph{important group}\marginpar{important~g.} if $k_i>\frac
{\sqrt{\alpha}nr}{2}$. The set of important groups is denoted by $\impgr\subset [c]$.\marginpar{$\impgr$}
\end{definition}

\begin{lemma}\label{lem:Nimportant}
The total number of edges in forests~$(F_{i,s})_{i,s}$ from non-important groups is less than 
$\beta n^2/16$.
\end{lemma}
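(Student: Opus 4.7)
The proof is going to be a direct counting argument using only the definition of non-important group together with the bound on the size of each level forest~$F_{i,s}$. The plan is: for each $i\notin\impgr$, we have $k_i\le\tfrac12\sqrt\alpha\,nr$ by Definition~\ref{def:importantgroup}; for each such $i$ and each $s\in[k_i]$, $F_{i,s}$ is a forest with $v(F_{i,s})\le 2n/r$ by~\eqref{eq:BoundsOnVFis}, so $e(F_{i,s})<v(F_{i,s})\le 2n/r$. Since $|[c]\setminus\impgr|\le c=50/\epsilon$, summing over all non-important groups gives
\begin{equation*}
\sum_{i\notin\impgr}\sum_{s\in[k_i]}e(F_{i,s})
\;\le\; c\cdot\frac{\sqrt\alpha\,nr}{2}\cdot\frac{2n}{r}
\;=\; c\sqrt\alpha\,n^2
\;=\;\frac{50\sqrt\alpha}{\epsilon}\,n^2.
\end{equation*}

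By the choice of constants at the start of Section~\ref{sec:proofofnibble}, namely $0<\alpha\ll\alphaA\ll\cdots\ll\beta$, the parameter~$\alpha$ is taken small enough as a function of $\epsilon,\beta,c,\Delta$ that $50\sqrt\alpha/\epsilon<\beta/16$. Plugging this in finishes the proof. There is no real obstacle; the lemma is essentially a bookkeeping statement whose whole point is to justify that the contribution of groups containing very few trees is negligible for the quasirandomness analysis to come, and the factor $\sqrt\alpha$ in Definition~\ref{def:importantgroup} was tailored precisely so that this trivial calculation would go through.
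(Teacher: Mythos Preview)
Your proof is correct and follows essentially the same approach as the paper: bound the number of forests per non-important group by $\tfrac12\sqrt\alpha\,nr$, bound each forest's edge count by $2n/r$, multiply by at most $c$ groups, and invoke $c\sqrt\alpha<\beta/16$ from the hierarchy of constants. The paper's proof is the same two-line calculation.
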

\begin{proof}
By definition there are at most
$\frac{\sqrt{\alpha}nr}{2}$ forests in each non-important groups and each such forest has at most $\frac{2n}{r}$ edges. The number of non-important groups is at most $c$.  
As $c\sqrt{\alpha}<\beta/16$,  the claim follows.
\end{proof}

\begin{definition}[typical]\label{def:typical}
Let $i\in [c]$.
A pair $uv\in \binom{V}{2}$ is called \DEF{$i$-typical} if $(\load(u,v,\cU_i)-\mu(\cU_i))^2\le\sqrt{\alpha}n^2$, and \DEF{$i$-atypical}, otherwise.
An edge $uv\in E$ is called \DEF{typical}, if it is $i$-typical for each $i\in [c]$, and \DEF{atypical} otherwise.
\end{definition}

\begin{lemma}\label{lem:atypical}
  For each~$i\in [c]$ there are at most $\sqrt[4]{\alpha} n^2$ pairs in
  $\binom{V}{2}$ that are $i$-atypical. Consequently, there are at most
  $\beta n^2/16$ atypical edges in the graph~$G$.
\end{lemma}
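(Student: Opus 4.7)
The plan is to derive both parts directly from the definition of $(\alpha,n)$-homogeneity by a Chebyshev-style counting argument. No part of this proof looks delicate; the only real content is to unwind what $\sigma(\cU_i)\le \alpha n^4$ says about how many pairs can deviate from the mean load.

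For the first part, fix $i\in[c]$ and let $N_i$ denote the number of $i$-atypical pairs in $\binom{V}{2}$. Each such pair contributes more than $\sqrt{\alpha}\,n^2$ to the sum
\[
\sigma(\cU_i)=\sum_{\{u,v\}\in\binom{V}{2}}\bigl(\load(u,v,\cU_i)-\mu(\cU_i)\bigr)^{2}\le \alpha n^{4},
\]
where the inequality uses that $\cU_i$ is $(\alpha,n)$-homogeneous. Hence $N_i\cdot\sqrt{\alpha}\,n^2<\alpha n^4$, so $N_i<\sqrt{\alpha}\,n^2\le\sqrt[4]{\alpha}\,n^2$, using $\alpha\le 1$.

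For the second part, observe that every atypical edge $uv\in E$ is $i$-atypical for some $i\in[c]$, so by a union bound the number of atypical edges is at most $\sum_{i\in[c]}N_i\le c\,\sqrt[4]{\alpha}\,n^2$. Since $c=50/\eps$ and $\alpha$ was chosen at the beginning of Section~\ref{sec:proofofnibble} to be sufficiently small as a function of $\eps,\beta,c,\Delta$ (in particular $\alpha\ll\beta$ gives $c\sqrt[4]{\alpha}\le\beta/16$), we obtain the claimed bound $\beta n^2/16$ on the number of atypical edges.

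Since both parts are essentially immediate from the definitions together with the hierarchy of constants, there is no real obstacle to overcome; the only thing worth double-checking is that the weaker bound $\sqrt[4]{\alpha}\,n^2$ stated in the lemma (rather than the $\sqrt{\alpha}\,n^2$ that actually falls out of the calculation) leaves enough room for the subsequent union bound, which it does.
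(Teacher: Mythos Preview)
Your proof is correct and essentially identical to the paper's: both use the Chebyshev-type bound $N_i\cdot\sqrt{\alpha}\,n^2\le\sigma(\cU_i)\le\alpha n^4$ to get $N_i\le\sqrt{\alpha}\,n^2$, then a union bound over $i\in[c]$ together with the constant hierarchy $\alpha\ll\beta$ (relative to $c$). One cosmetic remark: in the context of Lemma~\ref{lem:newnibble}, $c$ is an abstract input parameter rather than specifically $50/\eps$, so it is cleaner to justify $c\sqrt[4]{\alpha}\le\beta/16$ (or, as the paper does, $c\sqrt{\alpha}<\beta/16$) purely by the choice of $\alpha$ being small as a function of $\eps,\beta,c,\Delta$.
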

\begin{proof}
  For each group~$i\in [c]$, we have $\sigma(\mathcal{U}_i)<\alpha n^4$ by
  assumption, and thus at most $\sqrt{\alpha}n^2$ pairs satisfy
  $(\load(u,v,\cU_i)-\mu(\cU_i))^2>\sqrt{\alpha}n^2$ and are thus
  $i$-atypical. As $c\sqrt{\alpha}< \beta/16$, the second assertion follows.
\end{proof}

For showing the quasirandomness of~$\tilde G$ we shall use the following easy error bound.
\begin{lemma}
\label{lem:HONZAerror}
For each $M\in (0,1]$ and each $a\in(-0.5,\infty)$, we have $M-|a|\le M^{1+a}\le M+|a|$.
\end{lemma}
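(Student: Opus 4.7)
The plan is to reduce the two-sided inequality $M-|a|\le M^{1+a}\le M+|a|$ to the single estimate $|M^{1+a}-M|\le|a|$, and then prove the latter by a mean-value argument combined with a one-variable optimisation.

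First I would set $f(a)=M^{1+a}=M\cdot e^{a\ln M}$, observe that $f(0)=M$, and apply the mean-value theorem to obtain $f(a)-f(0)=a\cdot f'(\xi)=a\cdot M^{1+\xi}\ln M$ for some $\xi$ strictly between $0$ and $a$. Since $a>-\tfrac12$ we have $\xi>-\tfrac12$, hence $\eta:=1+\xi>\tfrac12$, and the required bound $|f(a)-f(0)|\le|a|$ reduces to
\[
M^{\eta}\,|\ln M|\le 1\qquad\text{for all }M\in(0,1]\text{ and all }\eta>\tfrac12.
\]

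To establish this one-variable inequality I would use elementary calculus. The function $g(M)=-M^{\eta}\ln M$ (which equals $M^{\eta}|\ln M|$ since $\ln M\le 0$ on $(0,1]$) is non-negative, vanishes at $M=1$ and tends to $0$ as $M\to 0^+$, and its derivative $g'(M)=-M^{\eta-1}(\eta\ln M+1)$ has a unique zero on $(0,1)$ at $M^\star=e^{-1/\eta}$. The maximum value is therefore $g(M^\star)=\tfrac{1}{\eta e}$, which is at most $\tfrac{2}{e}<1$ whenever $\eta>\tfrac12$. Feeding this back into the mean-value identity yields $|M^{1+a}-M|\le|a|\cdot M^{1+\xi}|\ln M|\le|a|$, which is exactly the desired estimate.

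I do not foresee any substantive obstacle; the only subtle point is keeping track of the range of the intermediate point~$\xi$ produced by the mean-value theorem, which is precisely where the hypothesis $a>-\tfrac12$ enters (via $\eta>\tfrac12$, which is what keeps the maximum $1/(\eta e)$ strictly below~$1$).
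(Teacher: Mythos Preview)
Your proposal is correct and follows essentially the same approach as the paper: both arguments reduce to showing that the derivative $M^{1+a}\ln M$ has absolute value at most~$1$ on the given range, using that the exponent $1+a$ (or $1+\xi$) exceeds~$\tfrac12$. The only difference is cosmetic: where the paper bounds $|M^{1+a}\ln M|\le|\sqrt{M}\ln M|$ and then appeals to a numerical check that this is below~$0.8$, you carry out the explicit optimisation to obtain the sharp maximum $\tfrac{1}{\eta e}\le\tfrac{2}{e}$, which is a slightly cleaner finish.
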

\begin{proof}
Suppose that $M$ is fixed. The claim holds trivially for $a=0$. Thus it suffices to prove that within the range of $a$, the derivative of $M^{1+a}$ with respect to~$a$ is at most~1 in absolute value. We have $|\frac{\mathrm d}{\mathrm d a} M^{1+a}|=|M^{1+a}\ln M|\le|\sqrt{M}\ln M|$. It can be numerically checked, that for each $M\in (0,1]$, we have $\sqrt{M}\ln M\in(-0.8,0]$. The claim follows.
\end{proof}
\begin{lemma}
  \label{lem:quasirandom}
  With probability at least $1-\exp(-\sqrt{n})$, we
  have that $\tilde G$ is
  $\beta$-quasirandom.
\end{lemma}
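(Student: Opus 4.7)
The plan is to fix an arbitrary vertex subset $B\subset V$, establish concentration of $e_H(B):=|E(H)\cap\binom{B}{2}|$ (with $H=\bigcup_{i,s}E(h_{i,s})$) around an expected value that depends on $B$ only through $|B|$ up to an additive $\beta n^2/2$ error, and then union-bound over all $B\subset V$. Since $e_{\tilde G}(B)=e_G(B)-e_H(B)$ and $G$ is $\alpha$-quasirandom of density $d$, this yields $e_{\tilde G}(B)=(d-d_H)\binom{|B|}{2}\pm\beta n^2$ uniformly in $B$ (with $d_H=|H|/\binom{m}{2}$); setting $B=V$ identifies $d-d_H$ with the density of $\tilde G$.

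First I would compute $\Exp[e_H(B)]$. By linearity this breaks as a sum over $(i,s)$ and then over edges $xy\in E(F_{i,s})$, with $x\in\primary_{i,s}$ and $y\in\secondary_{i,s}$. Lemma~\ref{lem:probh(x)=v}\ref{it:probedge} gives the probability that $xy$ is mapped to a specific edge of $G_{i,s}$ as $(1\pm\alphaA(2/d_{i,s})^\Delta)^{\Delta+2}/(d_{i,s}m_{i,s}^2)$. Summing over $\{u,v\}\in E(G_{i,s})\cap\binom{B}{2}$ and invoking the $\alphaA$-quasirandomness of $G_{i,s}$ yields a contribution of $\frac{2e(F_{i,s})}{m_{i,s}^2}\binom{|B\cap V_{i,s}|}{2}$ from $(i,s)$, up to a total $O(\alphaA)\sum_{i,s}e(F_{i,s})\le\beta n^2/16$ additive error (with a further $O(\beta n^2)$ error absorbed into the slack to account for those edges of $E(h_{i,s})$ not produced by the naive sum, due to collisions or skipped endpoints, which are controlled by the earlier lemmas of this section). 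Next I would argue that, up to error $\beta n^2/4$, this sum depends on $B$ only through $|B|$: non-important groups contribute at most $\beta n^2/16$ by Lemma~\ref{lem:Nimportant}, and within each important group $i$ the $(\alpha,n)$-homogeneity of $\cU_i$ guarantees $||B\cap V_{i,s}|-|B\cap V_{i,s'}||\le\alpha n$ for all $s,s'\in[k_i]$, reducing the sum over $s$ to a quantity depending only on $|B|$ and on group-level averages.

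Next I would establish concentration of $e_H(B)$ using McDiarmid's Inequality on the product probability space $\Omega=\prod_{i,s}\Omega_{F_{i,s}}$ from~\eqref{eq:productspace}. Altering a single coordinate (an image $h(x)$ of a primary vertex or a uniform real $\tau(y)$ for a secondary vertex) changes at most $\Delta+1$ edges of $H$, so $e_H(B)$ is $(\Delta+1)$-Lipschitz. The total number of coordinates is at most $\sum_{i,s}v(F_{i,s})\le 4n^2/r$, so Lemma~\ref{lem:McDiarmid} with $t=\beta n^2/8$ yields $\Prob[|e_H(B)-\Exp[e_H(B)]|>\beta n^2/8]\le 2\exp(-c n^2)$ for some $c=c(\beta,r,\Delta)>0$. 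A union bound over the at most $2^m\le 2^{2n}$ subsets $B\subset V$ still leaves failure probability below $\exp(-\sqrt n)$, which proves the lemma.

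The main obstacle is the expectation computation, specifically the reduction of $\Exp[e_H(B)]$ to a $B$-independent density times $\binom{|B|}{2}$. The set $V_{i,s}$ into which $F_{i,s}$ is embedded depends on the forbidden set $U_{i,s}$, so $|B\cap V_{i,s}|$ varies with $s$. Controlling this variation amounts to controlling, for each pair $uv$, the quantity $\load(u,v,\cU_i)$ of groups forbidding $uv$, and this is precisely what the notions of important group (Definition~\ref{def:importantgroup}) and typical pair (Definition~\ref{def:typical}) are introduced for: the homogeneity hypothesis on $\cU_i$ ensures $\load(u,v,\cU_i)$ is close to its average $\mu(\cU_i)$ on typical pairs for important groups, and Lemma~\ref{lem:atypical} ensures atypical pairs contribute only $O(\beta n^2)$ to $\Exp[e_H(B)]$ in total.
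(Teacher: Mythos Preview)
Your concentration step via McDiarmid and the union bound over all $B\subset V$ is correct and matches the paper. (A minor correction: the Lipschitz constant is $\Delta^2$, not $\Delta+1$, because changing the image of a primary vertex $x$ also alters the images of its up to $\Delta$ secondary neighbours and hence of up to $\Delta^2$ forest edges; this does not affect the argument.)

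The gap is in the expectation step. Your claim that $(\alpha,n)$-homogeneity of $\cU_i$ guarantees $\big||B\cap V_{i,s}|-|B\cap V_{i,s'}|\big|\le\alpha n$ is false: homogeneity bounds only $\big||U_{i,s}|-|U_{i,s'}|\big|$ and the pair-variance $\sigma(\cU_i)$, and says nothing about how an \emph{arbitrary} set $B$ intersects the individual $U_{i,s}$. One can have $U_{i,s}\subset B$ and $U_{i,s'}\cap B=\emptyset$ with $|U_{i,s}|=|U_{i,s'}|$, giving $\big||B\cap V_{i,s}|-|B\cap V_{i,s'}|\big|=|U_{i,s}|$, which may be of order $n$. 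So the forest-by-forest reduction in your second paragraph does not go through.

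Your final paragraph points in the right direction but does not carry it out: one must switch the order of summation and sum over pairs $uv\in\binom{B}{2}\cap E(G)$ first. Then the dependence on the family $\cU_i$ enters only through which $s$ satisfy $U_{i,s}\cap\{u,v\}=\emptyset$, i.e.\ through $\load(u,v,\cU_i)$, and typicality (Definition~\ref{def:typical}) together with Lemma~\ref{lem:atypical} control this uniformly in $uv$. This is precisely what the paper does, with one extra twist: the paper computes $\Prob[uv\in H]$ (rather than the expected multiplicity of $uv$) and therefore must handle the dependence among the events ``$xy$ maps to $uv$'' for forest edges $xy$ at distance $\le 4$ within a single tree, which is done via Suen's inequality (Claims~\ref{cl:Rheneas} and~\ref{cl:John}). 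Your linearity-based multiplicity approach would sidestep Suen entirely, absorbing the difference between $\Prob[uv\in H]$ and the expected multiplicity into the edge-collision error as you indicate --- but only once the $uv$-indexed sum is set up correctly in place of the false claim about $|B\cap V_{i,s}|$.
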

\begin{proof}
  By the definition of quasirandomness (Definition~\ref{def:subsetQuasi})
  we need to show that with high probability there exists a number
  $p_{\tilde G}$ such that for each set $B\subseteq V$, we have
  that \[e(\tilde G[B])=p_{\tilde G}\binom{|B|}{2}\pm \beta n^2\;.\] As $G$
  is $\alpha$-quasirandom, it is enough to show that with high probability
  there is a number $p_h$ such that each set $B\subseteq V$ satisfies
  \[\Big|E(h) \cap \binom{B}{2}\Big|=p_h\binom{|B|}{2}\pm
  \frac{\beta n^2}2\;.\] Let us fix a set $B\subset V$. We first show that with
  high probability $|E(h)\cap \binom{B}{2}|$
  is close to its expectation
  $\lambda_B=\Exp[|E(h)\cap
      \binom{B}{2}|]$. Note that the
  random variable $|E(h)\cap\binom{B}{2}|$ is
  $\Delta^2$-Lipschitz. McDiarmid's Inequality, Lemma~\ref{lem:McDiarmid}, gives that
  \begin{align*}
    \Prob\bigg[\bigg|\Big|E(h)\cap
    \binom{B}{2}\Big|-\lambda_B\bigg|\ge \frac {\beta n^2}{8}\bigg]
   % &\le 2\exp\left(-\frac{2\beta^2n^4}{64}\cdot  \frac{1}\Delta^2{}  \cdot \frac {r}{4n^2}\right)\\
		&\le 2\exp\left(-\frac{2\beta^2n^4}{64\Delta^4}  \cdot \frac {r}{4n^2}\right)
		\le \exp(-n^{19/10})\;.
  \end{align*}
  Since there are $2^m\le 4^n$ choices of the set $B$, the lemma will
  follow from a union bound, if we show that for each set $B$ we
  have
  \begin{equation}\label{eq:odhadph}
    \Exp\Big[\Big|E(h)\cap
        \binom{B}{2}\Big|\Big]=p_h\binom{|B|}{2}\pm\frac{\beta n^2}4\;.
  \end{equation}

  By Lemmas~\ref{lem:Nimportant} and~\ref{lem:atypical}, the total
  contribution to the number of edges in $E(h)$ from
  non-important groups and from atypical edges is at most $\beta n^2/8$.
  Thus~\eqref{eq:odhadph} follows if for each typical edge $uv$ of $G$
  the probability that there is an edge of a forest in an important group
  that gets mapped to $uv$ is $p_h\pm \alphaD$. 
  We shall prove that this is the case in Claim~\ref{cl:John}, which will
  conclude the proof of the lemma.

  Before turning to this claim, we consider a fixed important group~$i$ and
  bound the probability that a typical edge $uv$ is the image of any
  edge of a forest of this group.  Observe that it suffices to consider
  forests $F_{i,s}$ with $U_{i,s}\cap \{u,v\}=\emptyset$. Let
  $xy\in E(F_{i,s})$ for such a forest. Denote by $A(x,y,u,v)$ the event that $h(x)=u$ and
  $h(y)=v$. 
	Then by Lemma~\ref{lem:probh(x)=v}\ref{it:probedge} we have 
  \begin{equation}\label{eq:ProbAxy}
    \Prob\left[A(x,y,u,v)\right]=\big(1\pm \alphaA(\frac{1}{d})^\Delta\big)^{\Delta+2}\frac
  {1}{dm_{i,s}^2}\eqByRef{eq:Vis_samesize}(1\pm \alphaB)\frac {1}{dm_i^2}\;.
  \end{equation}

	Let $H^i_{uv}$ be the set of all ordered pairs $(x,y)$ such that $xy\in
  E(F_{i,s})$ for~$s$ with $U_{i,s}\cap\{u,v\}=\emptyset$ and
  % =\bigcup_{s:U_{i,s}\cap \{u,v\}=\emptyset}\{(x,y)\in
  % V(F_{i,s})\times V(F_{i,s}):xy\in E(F_{i,s})\}$
  \begin{equation}\label{eq:defMi}
    M_i(u,v)= \prod_{(x,y)\in H^i_{uv}}\Prob\left[\overline{{A(x,y,u,v)}}\right]\;.
  \end{equation}
  Note that $M_i(u,v)$ is the
  probability that~$uv$ is not used by any forest from
  group~$i$ in an alternative random experiment where the forest edges 
  are mapped to~$G$ independently. Our next goal is to show that in our random
  experiment the corresponding probability does not deviate much from $M_i(u,v)$.

  \begin{claim}\label{cl:Rheneas}
    For each $uv\in E(G)$ and each important group~$i$ we have
    \[\Prob[h^{-1}(uv)\cap \bigcup_{s\in [k_i]}E(F_{i,s})= \emptyset]=(1\pm
    \alpha)M_i(u,v)\,.\]
  \end{claim}
  \begin{claimproof}[Proof of Claim~\ref{cl:Rheneas}] 
    We want to use Suen's inequality. Let $uv\in E$ be fixed and abbreviate $A(x,y)=A(x,y,u,v)$.  We set up a
    superdependency graph for the events
    $\{A(x,y)\}_{(x,y)\in H^i_{uv}}$ as follows.
    For $(x,y),(x',y')\in H^i_{uv}$, define $(x,y)\sim (x',y')$ if
    $\dist(xy,x'y')\le 4$.
    Notice that the embedding of a primary vertex influences only
    the embedding of the vertices in its neighbourhood (and itself). The
    embedding of a secondary vertex on the other hand is independent of the
    embedding of all vertices of distance at least~$3$. As a consequence, we get
    that~$\sim$ indeed defines a superdependency graph for the events $A(x,y)$.  The degrees in
    the superdependency graph are at most $1+4\Delta^5\le 5\Delta^5$.
    For $(x,y),(x',y')\in H^i_{uv}$, set
    \begin{equation}\label{eq:Dm}
      \nu_{xy,x'y'}=\frac{\Prob[A(x,y)\cap A(x',y')]+ \Prob[A(x,y)]\cdot \Prob[A(x',y')]}{\prod(1-\Prob[A(\tilde x,\tilde y)])}\;,
    \end{equation}
    where the product in the denominator ranges through all $(\tilde
    x,\tilde y)\in H^i_{uv}$ such that $(x,y)\sim(\tilde x,\tilde y)$ or
    $(x',y')\sim(\tilde x,\tilde y)$. We next upper-bound~\eqref{eq:Dm} in the
    case that $(x,y)\neq (x',y')$ are such that $(x,y)\sim(x',y')$. The
    denominator in~\eqref{eq:Dm} has at most $10\Delta^5$ factors, each of
    which is at least $1-\frac{1+\alphaB}{dm_i^2}$
    by~\eqref{eq:ProbAxy}. Similarly, by~\eqref{eq:ProbAxy} 
    the terms $\Prob[A(x,y)]$ and $\Prob[A(x',y')]$ 
    are at most $\frac{1+\alphaB}{dm_i^2}$.

    The event $A(x,y)\cap A(x',y')$ is empty when $x'=y$, or $x=y'$. If
    $x=x'\in\secondary$ or $y=y'\in\secondary$, the event $A(x,y)\cap
    A(x',y')$ puts requirements
    on the placement of two primary vertices and one secondary vertex
    $t\in\{x',y'\}$. Analogously to the proof of
    Lemma~\ref{lem:probh(x)=v}\ref{it:probedge}, we can show that in this
    case this event has probability
    \begin{align*}
      \Prob[A(x,y)\cap A(x',y')]&=\frac{\big(1\pm\alphaA(\frac{2}{d})^\Delta\big)^{\Delta+2}}{dm_{i,s}^3}
      \eqByRef{eq:Vis_samesize}
      \frac{1\pm \alphaB}{dm_i^3}\;.
    \end{align*}

    It remains to consider the case when $\{x',y'\}\setminus \{x,y\}$
    contains a secondary vertex. Without loss of generality assume
    that~$y'$ is secondary. We first expose the limping homomorphism
    entirely, except for~$y'$. Two cases may occur: either~$y'$ is skipped,
    and therefore, $A(x',y')$ cannot occur, or the image of~$y'$ is
    selected uniformly among at least $d^\Delta m_i/2$ vertices.
    Using~\eqref{eq:ProbAxy} we have
    \begin{align*}\Prob[A(x',y')\cap A(x,y)]&=\Prob[A(x',y')|A(x,y)]\cdot \Prob[A(x,y)]
      \le \Prob[h(y')=v|A(x,y)]\cdot \Prob[A(x,y)]\\
      &\le \frac {2}{d^\Delta m_i}\cdot \frac {1+\alphaB}{dm_i^2}\le \frac {3}{d^{\Delta+1}m_i^3}\;.
    \end{align*}
    Thus, for all $(x,y)\neq(x',y')$ with $(x,y)\sim(x',y')$ we have
    \begin{equation}\label{eq:boundOnNu}
      \nu_{xy,x'y'}\le
      \frac{4}{d^{\Delta+1}m_i^3}\cdot 
      \frac{1}{\big(1-\frac{1+\alphaB}{dm_i^2}\big)^{10\Delta^5}}     \le
      \frac{5}{d^{\Delta +1}(\epsilon n)^3}\;.%\frac{5}{d\varepsilon^2n^2}\;.
    \end{equation}
    Suen's inequality (Lemma~\ref{lem:Suen}) states that
    \begin{equation}\begin{split}
     \bigg|\Prob\Big[h^{-1}(uv)\cap \bigcup _{s\in[k_i]}E(F_{i,s})=\emptyset\Big]-M_i(u,v)\bigg|
      =\bigg|\Prob\Big[\bigwedge_{(x,y)\in H^i_{uv}} \overline{A(x,y)}\Big]-M_i(u,v)\bigg|\\
      \le M_i(u,v)\bigg(
        \exp\Big(\sum_{(x,y)\sim (x',y')}\nu_{xy,x'y'}\Big)-1\bigg)\;.
      \label{eq:Suen1}
    \end{split}\end{equation}
  We use~\eqref{eq:boundOnNu}, the bound $5\Delta^5$ on the degrees in the
  superdependency graph, and the fact that we have at most $4n^2/r$ edges
  in $\bigcup_{s}E(F_{i,s})$ to obtain that
  \begin{align*}
    \sum_{xy\sim x'y'}\nu_{xy,x'y'}\le \frac{5}{d^{\Delta+1}\varepsilon^3n^3}\cdot
    \frac{4n^2}{r}\cdot 5\Delta^5= \frac{100\Delta^5}{d^{\Delta+1}\varepsilon^3rn}\;.
  \end{align*}
  In particular, as $n\ge n_0$ is large, we get $\sum_{xy\sim
    x'y'}\nu_{xy,x'y'}<\frac{\alpha}{2}<1$. We use that $\exp(a)-1\le 2a$
  for each $a\in (0,1)$ and get
  $\Prob[h^{-1}(uv)\cap \bigcup_{s\in[k_i]}E(F_{i,s})= \emptyset]=(1\pm \alpha
  )M_i(u,v)$.
\end{claimproof}

\begin{claim}\label{cl:John} There exists $p_h>0$ such that for each
  typical edge $uv\in E$ we have
  \[\Prob[h^{-1}(uv)\cap \bigcup_{i\in \impgr}\;\bigcup_{s\in [k_i]}E(F_{i,s})=\emptyset]=p_h\pm\alphaD\;.\]
\end{claim}
\begin{claimproof}[Proof of Claim~\ref{cl:John}]
  First fix $i\in\impgr$ and a typical edge $uv\in E$.
  Let $S=\big\{s\in[k_i]\colon U_{i,s}\cap\{u,v\}=\emptyset\big\}$.
  Observe that $|S|=k_i-\load(u,v,\cU_i)$ and \[|H^i_{u,v}|=\sum_{s\in
    S}2(n_{i,s}-1)=|S|2(1\pm\alpha)(n_i)-1=2\big(k_i-\load(u,v,\cU_i)\big)(n_i-1)\pm
  3n\alpha n_i\,.\] Let us write
  $\ell_{uv}=2\big(k_i-\load(u,v,\cU_i)\big)(n_i-1)$. Further, we write
  $\ell_i=2(k_i-\mu(\mathcal {U}_i)) (n_i-1)$, and
  $M_i=(1-\frac{1}{dm_i^2})^{\ell_i}$. Note that $\epsilon^2n^2\le
  \ell_{uv}\le 2nn_i$ because $k_i\ge\sqrt{\alpha}nr/2$ as $i\in\impgr$.
  % $uv$ is a typical edge.
  Plugging~\eqref{eq:ProbAxy} into \eqref{eq:defMi}, we get
  \begin{align}
  \begin{split}\label{eq:clean1}
    M_i(u,v)&=\Big(1-\frac{1\pm \alphaB}{dm_i^2}\Big)^{|H^i_{uv}|}=\exp\left((\ell_{uv}\pm
      3\alpha nn_i)\cdot \ln\Big(1-\frac{1\pm \alphaB}{dm_i^2}\Big)\right)\\
    &=\exp\left((\ell_{uv}\pm
      3\alpha nn_i)\cdot (1\pm 2\alphaB)\cdot \ln\Big(1-\frac{1}{dm_i^2}\Big)\right)\\
    &=\exp\left((\ell_i\pm
      \alphaC nn_i)\cdot \ln\Big(1-\frac{1}{dm_i^2}\Big)\right)=\exp\left(\ell_i(1\pm
      \alphaD)\cdot \ln\Big(1-\frac{1}{dm_i^2}\Big)\right)\\
    &=\Big(1-\frac{1}{dm_i^2}\Big)^{(1\pm\alphaD)\ell_i}=M_i^{1\pm\alphaD}\;,
    \end{split}
\end{align}
where the third equality uses that
$$\ln\big(1-(1\pm\alphaB)\lambda)\big)=-(1\pm 1.5 \alphaB)\lambda=(1\pm 2\alphaB)\ln(1-\lambda)\quad\text{ for $|\lambda|\ll \alphaB$}\;,$$
and the fourth
equality uses that $uv$ is typical.
In total we get that for each typical edge $uv\in E$ we have
\begin{align}\label{eq:clean2}
\Prob\bigg[h^{-1}(uv)\cap \bigcup_{i\in \impgr}\;\bigcup_{s\in [k_i]}E(F_{i,s})=\emptyset\bigg]&= \prod_{i\in\impgr} M_i^{1\pm\alphaD}=\prod_{i\in\impgr}M_i\pm\alphaD\;,
\end{align}
where we used Lemma~\ref{lem:HONZAerror}.
The claim follows by setting $p_h=\prod_{i\in\impgr}M_i$.
\end{claimproof}
This finishes the proof of Lemma~\ref{lem:quasirandom}.
\end{proof}

%++++++++++++++++++++++LOAD++++++++++++++++++++++++++++++++++++++

Recall that $\tilde\cU_i=(\tilde U_{i,s})_{s\in [k_i]}$ with $\tilde U_{i,s}=U_{i,s}\cup V(h_{i,s})$.

 \begin{lemma}\label{lem:load}
We have
\[
\Prob[\forall i\in [c]\colon \sigma(\tilde\cU_i)\le \beta n^4]\ge
 1-\exp(-\sqrt{n}).
\]
 \end{lemma}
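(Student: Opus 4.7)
The plan is to separate the contribution of the ``old'' family $\cU_i$ from the ``new'' contribution coming from the images $V(h_{i,s})$. Abbreviating $L(v,w)=\load(v,w,\tilde\cU_i)$, $L_0(v,w)=\load(v,w,\cU_i)$, $\mu=\mu(\tilde\cU_i)$ and $\mu_0=\mu(\cU_i)$, I set $\Delta(v,w)=L(v,w)-L_0(v,w)\ge 0$ and $\bar\Delta=\mu-\mu_0$. Since $V(h_{i,s})\subseteq V_{i,s}$ implies $V(h_{i,s})\cap U_{i,s}=\emptyset$, the quantity $\Delta(v,w)$ counts precisely those $s\in[k_i]$ for which $V(h_{i,s})\cap\{v,w\}\ne\emptyset$ and $U_{i,s}\cap\{v,w\}=\emptyset$. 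Applying $(a+b)^2\le 2a^2+2b^2$ to the decomposition $L-\mu=(L_0-\mu_0)+(\Delta-\bar\Delta)$ and using the hypothesis that $\cU_i$ is $(\alpha,n)$-homogeneous, I obtain
\[
\sigma(\tilde\cU_i)\le 2\sigma(\cU_i)+2\sum_{\{v,w\}}(\Delta(v,w)-\bar\Delta)^2\le 2\alpha n^4+2\sum_{\{v,w\}}\Delta(v,w)^2.
\]
Since $\alpha$ is chosen much smaller than $\beta$ (so that $2\alpha\le\beta/2$), the task reduces to proving $\sum_{\{v,w\}}\Delta(v,w)^2\le\tfrac{\beta}{4}n^4$ on a high-probability event.

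To bound $\sum\Delta^2$ I use a simple counting trick. Let $N_v=|\{s\in[k_i]:v\in V(h_{i,s})\}|$. Each $s$ contributing to $\Delta(v,w)$ has $v$ or $w$ in $V(h_{i,s})$, so $\Delta(v,w)\le N_v+N_w$ and hence
\[
\sum_{\{v,w\}}\Delta(v,w)^2\le\sum_{\{v,w\}}(N_v+N_w)^2\le 2(m-1)\sum_{v\in V}N_v^2.
\]
Deterministically, $\sum_v N_v=\sum_{s\in[k_i]}|V(h_{i,s})|\le k_i\cdot\tfrac{2n}{r}\le\tfrac{4n^2}{r}$. For the maximum, the crude bound $N_v\le D_P(v)+D_S(v)$ together with Lemma~\ref{lem:DPandDSaresmall} yields $\max_v N_v\le 30n/(\eps r)$ except on an event of probability at most $2\exp(-\sqrt n)$; on the good event
\[
\sum_v N_v^2\le\max_v N_v\cdot\sum_v N_v\le\frac{120\,n^3}{\eps r^2},
\]
hence $\sum_{\{v,w\}}\Delta^2\le 300 n^4/(\eps r^2)\le\beta n^4/4$, the last step using that the nibble parameter $r$ is chosen sufficiently large relative to $1/(\eps\beta)$. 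Combining with the first paragraph, $\sigma(\tilde\cU_i)\le\beta n^4$ simultaneously for every $i\in[c]$ on this single high-probability event, so no further union bound is required.

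The main delicate point is the dependence on~$r$ in the final estimate: if one wanted the argument to work for every $r\ge 1$ and not just the large~$r$ used in the applications, the deterministic $O(n^4/(\eps r^2))$ bound on $\sum\Delta^2$ would be insufficient. Handling that regime would require exploiting the mutual independence of the limping homomorphisms $h_{i,s}$ for fixed~$i$ across $s$ via a McDiarmid-style concentration of $\sigma(\tilde\cU_i)$ around its mean, together with the observation that $\sum_{\{v,w\}}\mathrm{Var}(L(v,w))=\sum_s\sum_{\{v,w\}}p_s(1-p_s)=O(n^3)$, which shows that the ``variance of means'' $\sum(\mathbb E[L]-\mathbb E[\mu])^2$ inherits its size essentially from $\sigma(\cU_i)$.
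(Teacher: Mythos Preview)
Your decomposition $\sigma(\tilde\cU_i)\le 2\sigma(\cU_i)+2\sum_{\{v,w\}}(\Delta-\bar\Delta)^2$ and the subsequent bound $\sum\Delta^2\le 2(m-1)\sum_v N_v^2=O(n^4/(\eps r^2))$ are all correct, but the final step ``$300n^4/(\eps r^2)\le\beta n^4/4$, using that $r$ is chosen sufficiently large relative to $1/(\eps\beta)$'' is \emph{not} available to you. Look at the quantifier structure of Lemma~\ref{lem:newnibble}: $\alpha$ is chosen depending only on $\eps,\beta,c,\Delta$, and only \emph{afterwards} is the lemma required to hold for every integer~$r$. Worse, in the actual application (Section~\ref{sec:constants}) the lemma is invoked $r$ times with parameters $\beta_r>\beta_{r-1}>\cdots>\beta_1$, where each $\beta_{j-1}=\alpha_j$ is the $\alpha$-output of the lemma at the next level; these $\beta_j$ shrink enormously while $r=1000\Delta^2/\eps^{10\Delta}$ stays fixed. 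So for small~$j$ the inequality $1/(\eps r^2)\le\beta_j/1200$ fails badly. Your final paragraph acknowledges the issue but only sketches a repair; as written, the argument does not prove the lemma as stated or as used.

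The paper's proof takes a genuinely different route that avoids any $r$-dependence. Rather than bounding $\sum\Delta^2$ crudely, it shows that for every $i$-typical pair $uv$ the increment $L_i^*(u,v)=\Delta(u,v)$ concentrates (via Suen's inequality plus Chernoff) around a value $(k_i-\mu(\cU_i))p_i$ that does not depend on $uv$; hence $\sum_{uv}(L_i^*-\mu_i^*)^2\le\alphaE n^4$ with $\alphaE\ll\beta$ independently of~$r$. This ``constant-mean'' phenomenon is the key idea your crude bound discards: the new load is essentially the same across typical pairs, so its contribution to the variance is tiny, not merely $O(n^4/r^2)$. Your approach would be fine for a one-shot packing lemma with a single large~$r$, but the iterative nibble structure forces $\beta$ to be decoupled from~$r$, and for that the sharper concentration is necessary.
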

\begin{proof}
Fix $i\in[c]$. 
Let $L_i^*(u,v)=\load(u,v,\tilde\cU_i)-\load(u,v,\cU_i)$ and
$\mu_i^*=\mu(\tilde\cU_i)-\mu(\cU_i)$.
\begin{claim}\label{cl:gamman2}
With probability at least $1-\frac1c\exp(-\sqrt{n})$ we have that 
\begin{equation}\label{eq:GAMMA}
  \sum_{uv\in\binom{V}{2}}(L_i^*(u,v)-\mu_i^*)^2\le \alphaE n^4\;.
\end{equation}
\end{claim}
\begin{claimproof}[Proof of Claim~\ref{cl:gamman2}]
Fix an arbitrary $i$-typical pair $uv\in \binom{V}{2}$. Let~$s$
be such that $U_{i,s}\cap \{u,v\}=\emptyset$ and let $x\in
V(F_{i,s})$ be  arbitrary. Denote by~$A_x$ the event that 
$h(x)\in  \{u,v\}$. Lemma~\ref{lem:probh(x)=v}\ref{it:prob-prim}
and~\ref{it:prob-sec} and~\eqref{eq:Vis_samesize} give that 
\begin{align}\label{eq:Ax}
\Prob[A_x]=\frac {2(1\pm
\alphaB)}{m_i}\le \frac {3}{m_i}\;.
\end{align} 
Set $M=\prod_{x\in V(F_{i,s})}(1-\Prob[A_x])$. We have $M=\left(1-\frac {2\pm
2\alphaB}{m_i}\right)^{n_{i,s}}$.
Recall that $n_{i,s}=(1\pm\alpha)n_i$. We can now manipulate the error bounds as in~\eqref{eq:clean1},~\eqref{eq:clean2} and get
\begin{align}\label{eq:loadM}
M=\left(1-\frac{2}{m_i}\right)^{n_i}\pm\alphaC\;.
\end{align}

We shall approximate the values $p_{i,s}(u,v)=\Prob[h^{-1}(\{u,v\})\cap
V(F_{i,s})=\emptyset]$ using Suen's Inequality, similarly as in the proof
of Claim~\ref{cl:Rheneas}. We define a superdependency graph on vertex
set $V(F_{i,s})$ for the events $\{A_x\}_{x\in V(F_{i,s})}$ 
by letting $x\sim y$ whenever $\dist(x,y)\le 3$. Notice that the superdependency graph has degree at most~$\Delta^3$. Let
\begin{align}\label{eq:lete}
\nu_{xy}=\frac{\Prob[A_x\cap A_y]+\Prob[A_x]\cdot\Prob[A_y]}{\prod(1-\Prob[A_z])}\;,
\end{align}
where the product in the denominator is over all $z$ with $z\sim x$ or $z\sim
y$. By Lemma~\ref{lem:AxAy} we have
$
  \Prob[A_x\cap A_y]
  \le 4\big(\frac{3}{d}\big)^{4\Delta^2}\frac{1}{m_{i,s}^2}
  \le 5\big(\frac{3}{d}\big)^{4\Delta^2}\frac{1}{m_{i}^2}
$. Notice that the product in the denominator has at
most $2\Delta^3$ factors, corresponding to the size of the union of the neighbourhoods of~$x$ and~$y$. 
Together with~\eqref{eq:Ax},  we get for each $x\sim y$ that
\begin{equation*}
  \nu_{xy}
  \le \frac{5(\frac{3}{d})^{4\Delta^2}\frac{1}{m_{i}^2}+\frac{9}{m_i^2}}
           {(1-\frac{3}{m_i})^{2\Delta^3}}
  \le 10\Big(\frac{3}{d}\Big)^{4\Delta^2}\frac{1}{m_i^2}\,.
\end{equation*}
Note that for each $s\in [k_i]$, there are at most $(\Delta+1)^3m_{i,s}$
pairs $x,y\in V(F_{i,s})$ with $x\sim y$. Hence Suen's Inequality
(Lemma~\ref{lem:Suen}) gives
\begin{equation}\begin{split}\label{eq:boundpisuv}
    p_{i,s}(u,v)&=\Prob\Big[\bigwedge_{x\in V(F_{i,s})} \overline{A_x}\Big]=M \pm  M\cdot
    \left(\exp\bigg(\frac{(\Delta+1)^311(\frac{3}{d})^{4\Delta^2}}{m_i}\bigg)-1\right) \\ 
    &\leByRef{eq:loadM}\left(1-\frac{2}{m_i}\right)^{n_i}\pm 2\alphaC\;.
\end{split}\end{equation}

Set $p_i= \left(1-\frac{2}{m_i}\right)^{n_i}$.  We will show that
\begin{equation}\label{eq:domination}
\Prob[L_i^*(u,v)\ge (k_i-\mu(\mathcal {U}_i))\cdot p_i+4\alphaC n]\le \exp(-\alphaC^2n)\;.
\end{equation}
The random variable~$L_i^*(u,v)$ has law $\sum_{s\::\: U_{i,s}\cap
\{u,v\}=\emptyset}\mathrm{Be}(p_{i,s}(u,v))$. Using~\eqref{eq:boundpisuv}, we get
that $L_i^*(u,v)$ is stochastically dominated by $\sum \mathrm{Be}(p_i+2\alphaC)$, where the sum runs through
all~$s$ such that $U_{i,s}\cap \{u,v\}=\emptyset$. The number of summands is
$k_i-\load(u,v,\cU_i)$, which is at most $k_i-\mu(\mathcal {U}_i)+\sqrt[4]{\alpha}n$,
as $uv$ is $i$-typical. Observe that $(k_i-\mu(\mathcal {U}_i)+\sqrt[4]{\alpha}n)(p_i+2\alphaC)\le
(k_i-\mu(\mathcal {U}_i))\cdot p_i+3\alphaC n$. By Chernoff's
inequality~\eqref{eq:Ch1} and because $k_i\le 2n$, we
obtain~\eqref{eq:domination}. The computation that
% \begin{equation*}
$\Prob[L_i^*(u,v)\le (k_i-\mu(\mathcal {U}_i))\cdot p_i-4\alphaC n]\le \exp(-\alphaC^2n)$
% \end{equation*}
is done analogously. So with probability at least $1-2\binom{m}{2}\cdot
\exp(-\alphaC^2n)\ge 1-\frac1c\exp(-\sqrt{n})$, all $i$-typical pairs~$uv$ satisfy
$L_i^*(u,v)=(k_i-\mu(\mathcal {U}_i))p_i\pm 4\alphaC n$. Suppose this is
the case. Then 
%$\mu_i^*$ is very close to $(k_i-\mu(\mathcal {U}_i))\cdot
%p_i$. Indeed,
\begin{align*}
\mu_i^*&=\frac{1}{\binom{m}{2}}\sum_{uv\in \binom{V}{2}}
=\frac{1}{\binom{m}{2}}\sum_{uv \text{ $i$-typical}}\left((k_i-\mu(\mathcal {U}_i))p_i\pm
4\alphaC n\right)
+ \frac{1}{\binom{m}{2}}\sum_{uv \text{ $i$-atypical}} L_i^*(u,v) \\
% \left((k_i-\mu(\mathcal {U}_i))p_i\pm
% |L_i^*(uv)-(k_i-\mu(\mathcal {U}_i))p_i|\right)\\
&=(k_i-\mu(\mathcal {U}_i))p_i\pm 4\alphaC n \pm
\frac{\sqrt[4]{\alpha}n^2\cdot 2n}{\binom{m}{2}}=(k_i-\mu(\mathcal {U}_i))p_i\pm 5\alphaC n \,,
\end{align*}
where we used Lemma~\ref{lem:atypical}.
So with probability   at
least $1-\frac1c\exp(-\sqrt{n})$ we have
\begin{align*}
\sum_{uv\in \binom{V}{2}}(L_i^*(u,v)-\mu_i^*)^2&\le \sum_{uv \text{
$i$-typical}}(9\alphaC n)^2+ \sum_{uv \text{ $i$-atypical}}n^2\\ &\le n^2 \cdot
100\alphaC^2n^2+\sqrt[4]{\alpha} n^2\cdot n^2\le \alphaE n^4\;,
\end{align*}
where we used Lemma~\ref{lem:atypical} again.
\end{claimproof}

\begin{claim}\label{cl:variance}
If~\eqref{eq:GAMMA} holds, then $ \sigma(\tilde\cU_i)\le \beta n^4$.
\end{claim}
\begin{claimproof}[Proof of Claim~\ref{cl:variance}]
  Let~$W_i$ be the set of all pairs $uv\in\binom{V}{2}$ such that $uv$ is
  $i$-atypical or $(L_i^*(u,v)-\mu_i^*)^2>\sqrt{\alphaE}n^2$. From
  Lemma~\ref{lem:atypical} and~\eqref{eq:GAMMA} we get that 
  $|W_i|\le\sqrt[4]{\alpha}n^2+\sqrt{\alphaE}n^2< 2\sqrt[4]{\alphaE}n^2$. So,
  \begin{align*}
    \sigma(\tilde\cU_i)&=\sum_{uv\in\binom{V}{2}}(\load(u,v,\cU_i)+L_i^*(u,v)-\mu(\mathcal{U}_i)-\mu_i^*)^2\\
    &=\sum_{uv\in\binom{V}{2}}(\load(u,v,\mathcal{U}_i)-\mu(\mathcal{U}_i))^2+\sum_{uv\in\binom{V}{2}}(L_i^*(u,v)-\mu_i^*)^2\\
    &\qquad\qquad+\sum_{uv\in\binom{V}{2}}2(\load(u,v,\mathcal{U}_i-\mu(\mathcal{U}_i))(L_i^*(u,v)-\mu_i^*)\\
    &\leByRef{eq:GAMMA} \alpha n^4+\alphaE n^4+2\bigg(\sum_{uv\in W_i}n^2+\sum_{uv\in \binom{V}{2}\setminus W_i}(\sqrt[4]{\alpha}n\cdot
    \sqrt[4]{\alphaE}n)\bigg)\\ 
    &\le \alpha n^4+\alphaE n^4+2(2\sqrt[4]{\alphaE}n^2\cdot n^2+n^2\cdot \sqrt{\alphaE}n^2)<\beta n^4\;.
  \end{align*}
\end{claimproof}
Claims~\ref{cl:gamman2} and~\ref{cl:variance} and a union bound over all
$i\in[c]$ imply Lemma~\ref{lem:load}.
\end{proof}

\begin{lemma}\label{lem:homogeneous}
  With probability at least $1-\exp(-\sqrt{n})$ we have for each $i\in[c]$
  that $\big||\tilde U_{i,s}|-|\tilde U_{i,s'}|\Big|\le \beta n$, for all
  $s,s'\in [k_i]$.
\end{lemma}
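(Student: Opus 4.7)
The plan is to exploit the fact that $h_{i,s}$ is defined with image in $V_{i,s}=V\setminus U_{i,s}$, so that $U_{i,s}$ and $V(h_{i,s})$ are disjoint and consequently
\[
|\tilde U_{i,s}|=|U_{i,s}|+|V(h_{i,s})|\,.
\]
This splits the task into controlling the spread of $|U_{i,s}|$ and of $|V(h_{i,s})|$ across $s\in[k_i]$. The first is immediate: the $(\alpha,n)$-homogeneity of $\cU_i$ assumed in Lemma~\ref{lem:newnibble} yields $\bigl||U_{i,s}|-|U_{i,s'}|\bigr|\le \alpha n$ deterministically.

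For the second summand, I would use the identity
\[
|V(h_{i,s})|=v(F_{i,s})-|X_{i,s}|-|Y_{i,s}|-D_{i,s}\,,
\]
where $D_{i,s}$ is the number of duplicate images produced by $h_{i,s}$. The key counting observation is that $0\le D_{i,s}\le|\VC_{i,s}|$: if $k_v\ge 2$ pre-images land on the same host vertex $v$, then all $k_v$ of them sit in $\VC_{i,s}$ while only $k_v-1$ are duplicates, so summing over $v$ gives $D_{i,s}\le|\VC_{i,s}|$. The two deterministic inputs are now controlled by the assumptions of Lemma~\ref{lem:newnibble}: $|v(F_{i,s})-v(F_{i,s'})|\le 2\alpha n_i\le 2\alpha n/r$ from $n_{i,s}=(1\pm\alpha)n_i$, and $\bigl||X_{i,s}|-|X_{i,s'}|\bigr|\le 2\alpha n/r$ from $|X_{i,s}|\le\alpha n/r$.

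To handle the random terms, I would intersect the high-probability events from Lemmas~\ref{lem:Yis_small} and~\ref{lem:vertexcollisions}, each of which fails with probability at most $\exp(-\sqrt n)$, and which already give their bounds uniformly over all $(i,s)$. On this good event we have $|Y_{i,s}|\le \beta n/r$ and $|\VC_{i,s}|\le 20n/(\epsilon r^2d^\Delta)$ for every $(i,s)$, hence
\[
\bigl||V(h_{i,s})|-|V(h_{i,s'})|\bigr|\le \frac{4\alpha n}{r}+\frac{2\beta n}{r}+\frac{40n}{\epsilon r^2d^\Delta}\,.
\]
Adding the $\alpha n$ contribution from $\cU_i$ and invoking the hierarchy $\alpha\ll\beta$ together with $d\ge\epsilon$ (which makes the last term negligible for the values of $r$ to which the lemma is applied), the total is bounded by $\beta n$, as required. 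No further union bound is needed since the prior lemmas are already uniform in $(i,s)$. The only subtlety is the counting relation $D_{i,s}\le|\VC_{i,s}|$ and the disjointness $U_{i,s}\cap V(h_{i,s})=\emptyset$; everything else is routine collection of the estimates supplied by the earlier lemmas in the section.
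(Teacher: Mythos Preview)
Your decomposition $|\tilde U_{i,s}|=|U_{i,s}|+|V(h_{i,s})|$ and the counting $D_{i,s}\le|\VC_{i,s}|$ are both fine, and this is a pleasantly elementary route. The paper proceeds quite differently: it computes $\Exp[|V(h_{i,s})|]$ directly via Suen's inequality, obtaining $m_i\big(1-(1-1/m_i)^{n_i}\big)\pm O(\alphaC n)$, a quantity depending only on $i$, and then applies McDiarmid to each $|V(h_{i,s})|$ separately.

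There is, however, a genuine gap in your argument. In the quantifier structure of Lemma~\ref{lem:newnibble} (and hence of its sublemma~\ref{lem:homogeneous}), the constant $\alpha$ is fixed as a function of $\epsilon,\beta,c,\Delta$ \emph{before} $r$ is given; the proof in Section~\ref{sec:proofofnibble} explicitly sets up $\alpha\ll\alphaA\ll\cdots\ll\beta$ and only afterwards writes ``Given $r$, let $n_0$ be sufficiently large.'' Your final bound contains the term $40n/(\epsilon r^2 d^\Delta)$, and you dispose of it by saying it is ``negligible for the values of $r$ to which the lemma is applied.'' But inside the proof of Lemma~\ref{lem:newnibble} you are not entitled to assume any lower bound on $r$ in terms of $\beta,\epsilon,\Delta$: for $r=1$ this term is of order $n/\epsilon^{\Delta+1}$, which swamps $\beta n$. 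Moreover, even in the actual application (Section~\ref{sec:constants}), the lemma is invoked $r$ times with $\beta=\beta_j$, and $\beta_1=\alpha_2$ is obtained by iterating the lemma's own $\alpha$-function; nothing prevents $\beta_1$ from being far smaller than $1/(\epsilon^{\Delta+1}r^2)$.

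The paper's approach sidesteps this entirely because its estimate for $\Exp[|V(h_{i,s})|]$ has error $O(\alphaC n)$, controlled by the $\alpha$-hierarchy alone, with no $r$-dependent term of the type you inherit from Lemma~\ref{lem:vertexcollisions}. If you want to salvage your argument, you would need an $r$-free bound on $D_{i,s}$ (or on $|\VC_{i,s}|$) of the form $o_\alpha(1)\cdot n$; the current Lemma~\ref{lem:vertexcollisions} does not supply one.
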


\begin{proof}
  We first compute the expected size of the image $V(h_{i,s})$. More than
  the exact value of the expected size, we need to show that it does not
  depend much on $s\in [k_i]$. This is done in~\eqref{eq:EsizeImage}. Then
  we show the concentration.

  Fix $(i,s)$ and fix $v\in V(G_{i,s})$.  For $x\in V(F_{i,s})$ denote by $A_x$ the event
  that~$x$ is mapped to~$v$. By
  Lemma~\ref{lem:probh(x)=v}\ref{it:prob-prim} and~\ref{it:prob-sec} we
  have that
  \begin{align}\label{eq:IS1}
    \Prob[A_x]=\frac{(1\pm \alphaA)^{\Delta+3}}{m_{i,s}}
    \eqByRef{eq:Vis_samesize}
    \frac{1\pm\alphaB}{m_i}\;.
  \end{align}
  %We are interested in
  %\begin{align*}
    %\Prob[h_{i,s}^{-1}(v)\cap V(F_{i,s})=\emptyset]=\Prob\Big[\bigwedge_{x\in V(F_{i,s})}\overline{A_x}\Big]\;.
  %\end{align*}
  Using Suen's Inequality, we shall approximate 
	$\Prob[h_{i,s}^{-1}(v)\cap V(F_{i,s})=\emptyset]=\Prob\big[\bigwedge_{x\in V(F_{i,s})}\overline{A_x}\big]$ by
  $M=\prod_{x\in V(F_{i,s})}\Prob[\overline{A_x}]$.
%
	%
	%
	%this probability by
  %$M=\prod_{x\in V(F_{i,s})}\Prob[\overline{A_x}]$.
	%
	Manipulating the error bounds same as 
	in~\eqref{eq:loadM}, we have that
  \begin{align}\label{eq:M=}
    M=\left(1-\frac{1\pm \alphaB}{m_i}\right)^{n_{i,s}}=\left(1-\frac{1}{m_i}\right)^{n_i}\pm \alphaC\;.
  \end{align}
For $x,y\in V(F_{i,s})$, we write $x\sim y$ if
  $\dist(x,y)\le 2$. Note that this defines a superdependency graph
  for the events $\{A_x\}_{x\in V(F_{i,s})}$. Let
  \begin{align*}
    \nu_{xy}=\frac{\Prob[A_x\cap A_y]+\Prob[A_x]\cdot \Prob[A_y]}{\prod (1-\Prob[A_z])},
  \end{align*}
  where the product in the denominator is over all~$z$ with $z\sim x$ or
  $z\sim y$.  
	The product in the denominator has at most
  $2(\Delta^2+1)$ terms. We infer from~\eqref{eq:IS1} that the denominator
  is at least $1/2$ and that $\Prob[A_x]\cdot \Prob[A_y]$ is at most
  $(1+3\alphaB)/m_{i}^2$.
	Lemma~\ref{lem:AxAy} and~\eqref{eq:Vis_samesize}
  give $\Prob[A_x\cap A_y]\le
  (\frac{3}{d})^{4\Delta^2}\frac{1}{m_{i,s}^2}\le
  (\frac{4}{d})^{4\Delta^2}\frac{1}{m_{i}^2}$. Thus, we get that
  $\nu_{xy}\le (\frac{5}{d})^{4\Delta^2}\frac{1}{m_{i}^2}$.  Suen's
  Inequality (Lemma~\ref{lem:Suen}) gives that
  \begin{multline*}
    \Prob[h_{i,s}^{-1}(v)\cap V(F_{i,s})=\emptyset]
    =M\bigg(1\pm \Big(\exp(\Delta^2n_{i,s}\cdot \Big(\frac{5}{d}\Big)^{4\Delta^2}\frac{1}{m_{i}^2}\Big)-1\big)\bigg)\\
    \eqByRef{eq:M=} \bigg(\Big(1-\frac{1}{m_i}\Big)^{n_i}\pm \alphaC\bigg)
      \bigg(1\pm
      \Big(\exp\Big(\Delta^2\Big(\frac{5}{d}\Big)^{4\Delta^2}\cdot\frac{8}{r\eps^2
      n}\Big)-1\Big)\bigg)
    =\Big(1-\frac{1}{m_i}\Big)^{n_i}\pm 2\alphaC\;.
  \end{multline*}
  Therefore the expected size of the image $V(h_{i,s})$ is
  \begin{equation}\begin{split}
      \Exp[|V(h_{i,s})|]&=\sum_{v\in V(G_{i,s})}\Prob[h_{i,s}^{-1}(v)\cap V(F_{i,s})\neq\emptyset]
      =m_{i,s}\cdot \left(1-\Big(1-\frac{1}{m_i}\Big)^{n_i}\pm
        2\alphaC\right) \\
      &\eqByRef{eq:Vis_samesize}m_i \left(1-\Big(1-\frac{1}{m_i}\Big)^{n_i}\pm 3\alphaC\right)\;.
      \label{eq:EsizeImage}
  \end{split}\end{equation}
  Now we use McDiarmid's Inequality to show the concentration of $|V(h_{i,s})|$.
%	
	%Now we use McDiarmid's Inequality to show that $|V(h_{i,s})|$ is highly
  %concentrated around its expectation. 
	%
	Note that $|V(h_{i,s})|$ is $(\Delta+1)$-Lipschitz.
  Hence by McDiarmid's Inequality, Lemma~\ref{lem:McDiarmid}, we have
  \begin{align*}
    \Prob[\big|\Exp[|V(h_{i,s})|]-|V(h_{i,s})|\big| >\beta n/4]
    \le 2\exp\Big(-\frac{2\beta^2n^2}{16(\Delta+1)^2n_{i,s}}\Big)
    =\exp\Big(-n^{2/3}\Big)\;.
  \end{align*}
  Set $H_i=m_i \left(1-\Big(1-\frac{1}{m_i}\Big)^{n_i}\right)$. Then
  $\Exp[|V(h_{i,s})|]=H_i\pm 6\alphaC n$.  As
  $\big||U_{i,s}|-|U_{i,s'}|\big|\le \alpha n$, by a union bound over all
  $s\in [k_i]$ we obtain that
  \begin{align*}
    \Prob[\exists 
    s,s'\in [k_i]\::\:
    \big||\tilde U_{i,s}|-|\tilde U_{i,s'}|\big|
    >\beta n]\le \tfrac1c\exp(-\sqrt{n})\;.
  \end{align*}
  A union bound over all $i\in [c]$ leads to the statement of the lemma.
\end{proof}

\section{Concluding remarks}
\label{sec:Concluding}
In this section we discuss various ways how our main result, Theorem~\ref{thm:tree-packing}, could be extended.
\subsection{Strengthening Theorem~\ref{thm:tree-packing}: approximation}\label{ssec:strengthenapprox}
Theorem~\ref{thm:tree-packing} does not hold for $\epsilon=0$. To see this,
fix $\Delta\ge 3$ odd, let $\ell\ge 2$ be arbitrarily large, and consider
the full $\Delta$-regular tree of depth $\ell$ as in Figure~\ref{fig:3reg},
that is, each vertex in this tree has degree either~1 or~$\Delta$.  This
tree has an even number of leaves and an even number of internal vertices,
hence its order~$n$ is even.
\begin{figure}[h!]
     \centering
     \subfigure[The 3-regular tree of depth 2.]{\hspace{.5in}
          \label{fig:3reg}
          \includegraphics{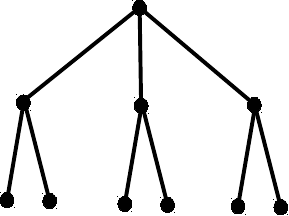}
          \hspace{.5in}}
     \hspace{0.2in}
     \subfigure[An example of the modified 3-regular tree.]{\hspace{.7in}
          \label{fig:3regm}
          \includegraphics{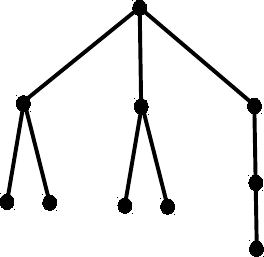}\hspace{.8in}}
     \caption{Regular trees and modified regular trees}
\end{figure}
Consider a family of $\frac n2$ copies of this tree. 
This family has $\binom{n}{2}$ edges in total. If it does not pack into $K_n$ we are done. 
Otherwise, in any such packing a vertex $v$ of $K_n$ accommodates exactly~$c_1$ leaves and~$c_2$ internal vertices of the trees, where $c_1$ and $c_2$ are integral and determined by the system
\begin{align*}
c_1+c_2&=\tfrac n2 &\text{(each tree uses $v$),}\\
c_1+\Delta c_2&=n-1 &\text{(each edge incident with $v$ is used).}
\end{align*}
This system has a unique solution (and thus the same for all vertices $v$) where~$c_1$ is half the number of leaves of one tree and~$c_2$ is half the number of internal vertices.

Now, we modify one of the trees by chopping off one leaf and appending it
to another leaf; see Figure~\ref{fig:3regm} (the resulting tree is
  not uniquely determined). This modified family does not pack into
$K_n$. Indeed, if it did then the vertex of $K_n$ hosting the unique vertex
of degree~2 would have to host $\tilde c_1$ leaves and $\tilde c_2$
vertices of degree $\Delta$, with
\begin{align*}
1+\tilde c_1+\tilde c_2&=\tfrac n2\;,&\text{$\hspace*{7cm}$}\\
2+\tilde c_1+\Delta \tilde c_2&=n-1\;.
\end{align*}
The integrality of the solution of the original system implies that the current one is not integral, contradiction. 

On the other hand, the following strengthening of Theorem~\ref{thm:tree-packing} may be true: Any family of trees of orders at most $n$ and maximum degrees at most $\Delta$ whose total number of edges is at most $\binom{n}{2}$ packs into $K_{n+C_\Delta}$, for a suitable constant $C_\Delta$ depending on $\Delta$ only.

\subsection{Strengthening Theorem~\ref{thm:tree-packing}: maximum degree}
We are convinced that at an expense of a more involved analysis, our techniques would allow to prove a version of Theorem~\ref{thm:tree-packing} (for each fixed $\epsilon>0$) for $\Delta$ growing with $n$, possibly as big as $\Delta=O(\log^\alpha n)$ for some $\alpha>0$.

We believe that Theorem~\ref{thm:tree-packing} holds even for $\Delta=\frac n2$. (New techniques would be necessary for a proof.) The following example shows that the $\frac n2$ barrier can essentially not be exceeded. Suppose that $\epsilon\in(0,10^{-3})$ is fixed. Let us consider a family of $\ell=\left\lfloor\binom{n}{2}/((\frac12+2\sqrt\epsilon)n)\right\rfloor$ 
copies of the star of order $(\frac12+2\sqrt\epsilon)n+1$. Note that $\ell<(1-3\sqrt\epsilon)n$. The total number of edges in this family is between $\binom{n}{2}-n$ and $\binom{n}{2}$. We claim it does not pack into $K_{(1+\epsilon)n}$. Suppose it does, and let us fix a packing. Let $W\subset V(K_{(1+\epsilon)n})$ be the vertices that do not host the centres of the stars. Observe that $|W|>3\sqrt\epsilon n$. Observe also that no edge of the packing lies inside $W$. That means that all the edges of the stars must be accommodated in the set $E(K_{(1+\epsilon)n})\setminus \binom{W}{2}$. We have
$$\binom{n}{2}-n>\binom{(1+\epsilon)n}{2}-\binom{3\sqrt\epsilon n}{2}\;,$$
a contradiction.
\medskip

Note that if the orders of the trees are at most half of the order of the host graph, no example analogous to that in Section~\ref{ssec:strengthenapprox} can be found.
Moreover, in Ringel's Conjecture (Conjecture~\ref{conj:Ringel}), it follows from the assumption on the order of the tree that its
maximum degree is at most half of the order of the host graph. 
Thus we propose the following strengthening of Conjecture~\ref{conj:Ringel}. 
\begin{conjecture}
Any family of trees of individual orders at most $n+1$ and total number of edges at most $\binom{2n+1}{2}$ packs into $K_{2n+1}$.
\end{conjecture}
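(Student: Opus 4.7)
The plan is to adapt the three-tier architecture of this paper (almost-packing via nibble, followed by a correction step) but to push both phases harder in order to (a) eliminate the $\eps n$ slack between the host graph and the trees, and (b) accommodate trees of unbounded maximum degree. First I would split each tree $T$ in the family into its \emph{high-degree skeleton}, consisting of the set $H(T)$ of vertices of degree larger than some threshold $\Delta=\Delta(n)$ (for instance $\Delta=\log^{2} n$), and the bounded-degree \emph{forest residue} $T-H(T)$. A simple counting argument, using $\sum_i e(T_i)\le\binom{2n+1}{2}$, shows that $\sum_{T}|H(T)|$ is small compared to $n$, which allows us to preallocate the skeletons greedily at the start of the process in a way reminiscent of the embedding of $T_0$ in Section~\ref{sec:treezero}. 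This preserves quasirandomness on the remaining graph and leaves a bounded-degree packing problem.

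\textbf{Removing the $\eps$-slack.} The residual problem is: pack a family $\cF$ of bounded-degree forests of orders at most $n+1$ with at most $\binom{2n+1}{2}-O(n\Delta)$ edges in total into $K_{2n+1}$. The existing Correction Lemma (Lemma~\ref{lem:almostpack}) reserves an $\eps m/2$-sized set $W$ to relocate colliding vertices; I would replace this global reserve by an \emph{absorber} $A\subset V(K_{2n+1})$ of size only $O(\log^{c} n)$, built so that for every small (polylogarithmic) set $R$ of collisions outside $A$, the graph $K_{2n+1}[A\cup R]$ contains a rich family of alternating paths that can re-route any colliding vertex or edge into $A$. Feeding the Almost Packing Lemma (Lemma~\ref{lem:ExistsAlmostPacking}) with a much smaller collision budget (this requires sharpening the Nibble Lemma's bounds \ref{cond:VertexCollisions}--\ref{cond:NCN} from $n/r^{2}$ to $n/r^{\omega(1)}$, i.e.\ taking $r$ diverging with $n$) would produce an almost-packing whose defects are absorbable by $A$. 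This absorber-plus-nibble hybrid is the natural way to drop the constant-order loss inherent in the present argument.

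\textbf{The high-degree skeleton and main obstacles.} To embed the high-degree skeletons, I would use that each $H(T)$ is a forest of size $O(n/\Delta)$ in total, and exploit the flexibility of Bollob\'as-type graph-packing theorems to embed the skeletons edge-disjointly in $K_{2n+1}$ while controlling the subsequent degree profile, so that after skeleton removal the host graph is still quasirandom and each $U_{i,s}$ remains homogeneous. The delicate point is that once $\Delta$ is not bounded, the limping homomorphism framework degrades: a secondary vertex with $\Delta$ exposed primary neighbours lives in a common neighbourhood of density $d^{\Delta}$, which becomes subpolynomial, so Lemma~\ref{lem:probh(x)=v} collapses. This is the principal obstacle, and it is why the conjecture is genuinely deeper than Theorem~\ref{thm:tree-packing}: any proof must handle the skeletons by an essentially deterministic or list-colouring-type argument rather than by random embedding. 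A secondary obstacle is the parity/arithmetic issue already exhibited in Section~\ref{ssec:strengthenapprox}; one must show that the flexibility provided by $|\cT|\le 2n+1$ rather than $|\cT|=n$ is enough to dodge such divisibility traps, which I expect to follow from the absorber construction by choosing $A$ to contain several ``tuning'' vertices of each required degree residue.
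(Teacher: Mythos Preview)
The statement you are addressing is posed in the paper as an open \emph{conjecture}; the paper does not prove it and does not claim to. There is therefore no ``paper's own proof'' to compare against. What you have written is not a proof either: it is a research outline in which you yourself flag the main steps as unresolved (``This is the principal obstacle'', ``any proof must handle the skeletons by an essentially deterministic or list-colouring-type argument''). That is an honest assessment of the difficulty, but it means the proposal does not establish the conjecture.

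Beyond the self-acknowledged gaps, there is a concrete error in your counting. You claim that with threshold $\Delta=\log^{2}n$, the quantity $\sum_{T}|H(T)|$ is small compared to $n$. But each vertex in $H(T)$ has degree exceeding $\Delta$, so $|H(T)|\le 2e(T)/\Delta$, and hence
\[
\sum_{T}|H(T)|\le \frac{2}{\Delta}\sum_{T}e(T)\le \frac{2}{\Delta}\binom{2n+1}{2}=\Theta\!\left(\frac{n^{2}}{\log^{2}n}\right),
\]
which is far larger than $n$, not small compared to it. So the high-degree skeletons cannot be ``preallocated greedily'' as a negligible set in the manner of the embedding of $T_{0}$; they carry a constant fraction of all the edges. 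This undercuts the entire first phase of your plan. (To make $\sum_T|H(T)|=o(n)$ you would need $\Delta\gg n$, at which point $H(T)=\emptyset$ for every tree of order at most $n+1$ and the decomposition is vacuous.)

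The remaining ingredients --- pushing $r\to\infty$ in the Nibble Lemma, replacing the $\eps m$ reserve by a polylogarithmic absorber, and handling degree-parity obstructions via tuning vertices --- are reasonable heuristics, but each is a substantial open problem in its own right, and none is carried out here. The conjecture remains open.
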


\subsection{Different host graphs in Theorem~\ref{thm:tree-packing}}
Hobbs, Bourgeois, and Kasiraj~\cite{HBK87} modified
Conjecture~\ref{conj:tree-packing} to the setting of
complete bipartite graphs.
\begin{conjecture}\label{conj:tree-packingBip}
If $n$ is even then any family of $n$ trees $(T_j)_{j\in [n]}$ with $v(T_j)=j$ packs into $K_{n-1,n/2}$. 
If $n$ is odd then any family of $n$ trees $(T_j)_{j\in [n]}$ with $v(T_j)=j$ packs into $K_{n,(n-1)/2}$. 
\end{conjecture}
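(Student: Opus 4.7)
The edge-count identity $e(K_{n-1,n/2}) = (n-1)(n/2) = \binom{n}{2} = \sum_{j=1}^n (j-1)$ (and its odd analogue) shows that Conjecture~\ref{conj:tree-packingBip} asks for \emph{perfect} bipartite packings, strictly stronger than any asymptotic bipartite analogue of Theorem~\ref{thm:tree-packing}. The plan is to follow the three-phase blueprint of our proof of Theorem~\ref{thm:tree-packing} — reduction, approximate packing, correction — but engineered so that the correction step is exact; the decisive new ingredient is a bipartite absorption step replacing Lemma~\ref{lem:almostpack}.

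The reduction-and-orientation phase: trees of order at most $\gamma n$ contribute at most $\gamma^2 n^2 /2$ edges in total and can be deferred to a final greedy cleanup, or else merged in pairs along leaves as in Section~\ref{sec:mainproof}, reducing to trees of linear order (with some care to preserve bipartitions under leaf identification). The genuinely new bipartite step is an \emph{orientation lemma}: each $T_j$ has a canonical bipartition $(A_j,B_j)$ and one must choose, for every $j$, whether $A_j\hookrightarrow L$ (the side of size $n-1$) or $A_j\hookrightarrow R$ (the side of size $n/2$). Since a perfect packing forces the cumulative degree used at each $v\in L$ (resp.\ $v\in R$) to equal exactly $n/2$ (resp.\ $n-1$), I first want orientations so that the aggregate vertex demand on each side matches exactly; this should be achievable by a greedy pairing argument in which a tree with unbalanced bipartition $a{:}b$ is matched against another with bipartition $b{:}a$ so that imbalances cancel up to an $O(1)$ residue.

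The bipartite nibble phase: redevelop the limping-homomorphism machinery of Sections~\ref{sec:limpingWalk}--\ref{sec:proofofnibble} on a bipartite host. Bipartite quasirandomness — controlling $e(X,Y)$ for $X\subseteq L$, $Y\subseteq R$ — replaces $\alpha$-quasirandomness of a single vertex set; I expect the analogues of Lemmas~\ref{lem:bad}, \ref{lem:superquasirandom}, and~\ref{lem:newnibble} to go through, because common neighbourhoods across the bipartition behave as in the non-bipartite case. The limping homomorphism respects the orientation fixed in the previous phase: primary vertices of $T_j$ are mapped uniformly to their assigned side and secondary vertices are mapped into common neighbourhoods on the opposite side. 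This should yield an almost-packing leaving only $o(n^2)$ host-edges unused and only $o(n)$ unembedded vertices per tree.

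The main obstacle is the exact-correction step. Lemma~\ref{lem:almostpack} required a reserve of $\eps m$ unembedded vertices, but the conjecture leaves \emph{no} slack at all in either side of the host. My proposed remedy is to reserve, before the nibble starts, a small but carefully engineered bipartite \emph{absorber} $A\subseteq K_{n-1,n/2}$ built from many vertex-disjoint ``flexible'' gadgets with the property that any sufficiently well-distributed leftover of the almost-packing can be completed, without using any further edges, using only edges of $A$. Designing such an absorber — one that simultaneously accommodates unembedded fragments of many different trees with prescribed bipartition and degree data — is in my view the central difficulty and is likely to require genuinely new ideas, perhaps in the spirit of the R\"odl--Ruci\'nski--Szemer\'edi absorbing method; the bipartite setting is plausibly more tractable than the non-bipartite Tree Packing Conjecture itself, since bipartite parity constraints rule out certain pathological leftovers. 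A further, orthogonal difficulty is that the conjecture contains no degree bound (the stars $K_{1,j-1}$ lie in the family), so very high-degree trees must be pre-embedded separately before the quasirandomness-based nibble is invoked; fortunately stars and caterpillars have particularly rigid bipartite structure and should admit a direct ad hoc treatment that consumes a controlled portion of $K_{n-1,n/2}$ while preserving bipartite quasirandomness of the residual host.
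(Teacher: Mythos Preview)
The statement you are attempting to prove is Conjecture~\ref{conj:tree-packingBip}, which the paper presents as an \emph{open conjecture} in its concluding remarks; the paper does not prove it, and hence there is no proof in the paper to compare your attempt against. What the paper does establish is the asymptotic, bounded-degree bipartite analogue (Theorem~\ref{thm:tree-packingBip}), obtained by routine modifications of the main proof, and this is strictly weaker than the conjecture on two axes: it requires $\Delta=O(1)$ and it packs into $K_{(1+\eps)a,(1+\eps)b}$ rather than achieving a perfect packing.

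Your proposal is not a proof but a research outline, and you are candid about this in places. The two gaps you identify are both genuine and both fatal as stated. First, the absence of any degree bound: the entire limping-homomorphism machinery of Sections~\ref{sec:limpingWalk}--\ref{sec:proofofnibble} depends on $\Delta$ being bounded (the error terms in Lemma~\ref{lem:probh(x)=v} and the Lipschitz constants in the concentration arguments all degrade with~$\Delta$), and your suggestion that stars and caterpillars ``should admit a direct ad hoc treatment'' is not a substitute for an argument --- indeed, the family $(T_j)_{j\in[n]}$ may contain $\Theta(n)$ trees each with a vertex of degree $\Theta(n)$, and pre-embedding all of these while preserving quasirandomness of the residue is a problem of the same order of difficulty as the conjecture itself. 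Second, the correction step: you propose an absorber but supply no construction, and you yourself write that this ``is likely to require genuinely new ideas''. An absorber that simultaneously completes $\Theta(n)$ distinct partial tree embeddings, each with its own bipartition constraint and each needing its residual fragment to land on specified sides, is not covered by any existing absorbing lemma in the literature, and the bipartite parity observation does not obviously help. Your orientation lemma is also only sketched: the claim that imbalances can be paired to cancel ``up to an $O(1)$ residue'' is plausible but unproven, and an $O(1)$ residue is already incompatible with a perfect packing.

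In summary: the paper makes no claim to prove Conjecture~\ref{conj:tree-packingBip}, and your proposal, while a reasonable sketch of where the difficulties lie, does not constitute a proof either.
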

%\cite{Holl:Packing}
Our proof of Theorem~\ref{thm:tree-packing}
can be adjusted with only minor modifications to the bipartite setting. Thus, the very same
method yields an asymptotic solution of
Conjecture~\ref{conj:tree-packingBip} for trees of
bounded maximum degree. In that setting, the ratio of the host graph's colour classes does not have to be~$1:2$; one just needs them to be of the same order of magnitude.
\begin{theorem}\label{thm:tree-packingBip}
For any $\eps>0$ and any $\Delta\in \mathbb N$ there is an $n_0\in \mathbb N$
such that for any $a,b\ge n_0$, $\frac{a}{b}\in\left(\epsilon,\epsilon^{-1}\right)$ the following holds. Any
family of trees $(T_i)_{i\in[t]}$ with maximum degree
at most $\Delta$ and order at most $\min\{a,b\}$ satisfying
$\sum_{i=1}^t e(T_i)\le ab$ packs into $K_{(1+\eps)a,(1+\eps)b}$.
\end{theorem}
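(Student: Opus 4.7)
The plan is to mirror the proof of Theorem~\ref{thm:tree-packing} essentially line by line, replacing the complete host by the complete bipartite host $K_{(1+\eps)a,(1+\eps)b}$ with parts $A$, $B$. Every tree $T_i$ is bipartite with a unique $2$-colouring $(X_i^0, X_i^1)$, so before starting the embedding one fixes an \emph{orientation} $\sigma_i\in\{0,1\}$ declaring that $X_i^{\sigma_i}$ is placed in $A$ and $X_i^{1-\sigma_i}$ in $B$. The orientations are chosen greedily: process the trees in decreasing size and orient each one so as to keep $\sum_i |X_i^{\sigma_i}|/a$ close to $\sum_i |X_i^{1-\sigma_i}|/b$. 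Since each $v(T_i)\le\min\{a,b\}$ and $a/b\in(\eps,\eps^{-1})$, this keeps the two partial sums within an additive $O(\min\{a,b\})$ of each other, which is negligible compared to the reserves of size $\eps a, \eps b$ that we set aside below.

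With orientations fixed, the reduction to a bipartite $(a,b,\Delta)$-tree family (the analogue of an $(n,\Delta)$-family from Section~\ref{sec:mainproof}) proceeds by identifying leaves of small trees and grouping into $O(1/\eps)$ size-and-orientation classes padded by attached paths, exactly as in Section~\ref{sec:grouping}; the slack in the hypothesis $\sum e(T_i)\le ab$ absorbs the padding edges. The structural cutting lemma (Lemma~\ref{lem:cuttree}) is colour-oblivious and applies verbatim, producing $r$ level-forests per tree. One then runs $r$ rounds of a bipartite analogue of the Nibble Lemma and finishes with a bipartite Correction Lemma in which $\eps a/2$ reserve vertices are set aside on the $A$-side and $\eps b/2$ on the $B$-side at the outset; the greedy relocation of each $x\in R_{i,s}$ is carried out on whichever reserve side is dictated by $\sigma_i$ together with the colour class of $x$ in $T_i$, and the counts in Section~\ref{sec:almostpack} go through with $m$ replaced by $\min\{a,b\}$ at the cost of constants absorbed into $1/\eps$.

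The new technical ingredient is the bipartite nibble machinery. \emph{Bipartite $\alpha$-quasirandomness}: a subgraph $G$ of $K_{A,B}$ of density $d$ is $\alpha$-quasirandom if $e(X,Y)=d|X||Y|\pm\alpha ab$ for all $X\subset A$, $Y\subset B$; \emph{bipartite super-quasirandomness} is the corresponding bound on $p$-wise common neighbourhoods, which live on the part opposite to the $p$-set. This yields two-sided analogues of $\bad_{\gamma,\Delta}(G)$ and of Lemmas~\ref{lem:bad}--\ref{lem:densequasirandom}. The limping homomorphism of a bipartite forest $F$ with orientation $\sigma$ then places primary vertices uniformly at random on their designated side and each non-skipped secondary vertex uniformly at random in the common neighbourhood (on the opposite side) of the images of its $F$-neighbours. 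The probability estimates in Lemmas~\ref{lem:probh(x)=v}--\ref{lem:VC} go through with $|V|$ replaced by $|A|$ or $|B|$ and $d^\Delta$ interpreted bipartitely; since we start from density $1$ and remove only $O(ab)$ edges over all $r$ rounds combined, densities stay bounded away from $0$ throughout.

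The hardest step, I expect, is maintaining two homogeneity families (one per side) through Section~\ref{sec:proofofnibble}, because the ``typical pair'' computation and the Suen-based load estimates now split according to whether both endpoints of the pair lie in $A$, both in $B$, or one in each, and the roles of primary and secondary vertices depend on $\sigma_i$. Nothing genuinely new is needed, however: the same combination of McDiarmid, Talagrand, and Suen gives the required concentration on each side. The hypothesis $a/b\in(\eps,\eps^{-1})$ enters only in the orientation-balancing step and in ensuring that both reserves remain of linear size -- precisely the ``same order of magnitude'' remark made in the statement of the theorem.
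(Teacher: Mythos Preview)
The paper does not give a proof of Theorem~\ref{thm:tree-packingBip}; it simply asserts that the proof of Theorem~\ref{thm:tree-packing} ``can be adjusted with only minor modifications to the bipartite setting'' and that the ratio condition $a/b\in(\eps,\eps^{-1})$ is what makes both sides of linear size. Your sketch is a faithful and essentially correct elaboration of exactly those modifications---two-sided quasirandomness, side-respecting limping homomorphisms, reserves on both sides, and bookkeeping split by colour class---so it matches the paper's (implicit) approach; the orientation-balancing step you add is harmless and may in fact be dispensable once you group trees by the pair of colour-class sizes rather than by total size, but including it costs nothing.
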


Also, it is clear that the proof of Theorem~\ref{thm:tree-packing} goes through when the graph $K_{(1+\epsilon)n}$ is replaced by an arbitrary dense quasirandom graph (and the condition on the total number of edges in the family of trees is adjusted accordingly). Packing in random and quasirandom graphs is an important direction of research for its own sake, see e.g.~\cite{BFKL}.

\subsection{The tree-packing process}
We expect that the random embedding process described in Section~\ref{sec:outline} performs well even as a dynamic process on an evolving graph. That is, we believe that the quasirandomness of the host graph is also maintained by a sequential random embedding of the trees, where we forbid the edges (globally) and vertices (just for that particular tree) immediately after they are used.
This would yield another proof of Theorem~\ref{thm:tree-packing}, but we believe the analysis of this process would also be interesting in its own right.

\section{Acknowledgement}
JH wishes to thank Demetres Christofides, G\'abor Kun and Oleg Pikhurko for helpful
discussions. We thank Codru\unichar{355} Grosu for pointing out an error in a previous version of the manuscript. Finally, we thank an anonymous referee for their very detailed comments on the manuscript.

Much of the work was done during research visits where we (JB, JH, DP) had to take our little children with us. We would like to acknowledge the support of the London Mathematical Society (JB, JH, DP), EPSRC Additional Sponsorship with grant reference EP/J501414/1 (DP), and the Mathematics Institute at the University of Warwick (JH) for contributing to childcare expenses that incurred during these trips.

The paper was finalised during the participation in the program 
\emph{Graphs, Hypergraphs, and Computing} at Institut Mittag--Leffler. We would like to thank the organisers and the staff of the institute for creating a very productive atmosphere. Moreover, we would like to thank Emili Simonovits for helping us with babysitting.

%% BIBLIOGRAPHY %%%%%%%%%%%%%%%%%%%%%%%%%%%%%%%%%%%%%%%%%%%%
\bibliographystyle{amsplain} \bibliography{bibl}
%%%%%%%%%%%%%%%%%%%%%%%%%%%%%%%%%%%%%%%%%%%%%%
\end{document}